\newtheorem{theorem}{Theorem}[section]
\newtheorem{lemma}[theorem]{Lemma}
\newtheorem{proposition}[theorem]{Proposition}
\newtheorem{definition}[theorem]{Definition}
\newtheorem{corollary}[theorem]{Corollary}
\newtheorem{example}[theorem]{Example}
\def\to{\rightarrow}
\def\f{\mathfrak}
\def\c{\mathcal}
\def\r{\mathrm}
\def\bb{\mathbb}
\def\ov{\overline}
\date{}
\begin{document}
\title{Topological Entropy for Arbitrary Subsets of\\ Infinite Product Spaces}
\author{Maysam Maysami Sadr\thanks{sadr@iasbs.ac.ir}\hspace{2.5mm}\&\hspace{1mm}Mina Shahrestani\thanks{m.shahrestani@iasbs.ac.ir}}
\affil{Department of Mathematics, Institute for Advanced Studies in Basic Sciences, Zanjan, Iran}
\maketitle
\begin{abstract}
In this note a notion of generalized topological entropy for arbitrary subsets of the space of all sequences
in a compact topological space is introduced.
It is shown that for a continuous map on a compact space the generalized topological entropy of the set of all orbits of the map coincides
with the classical topological entropy of the map. Some basic properties of this new notion of entropy are considered; among them are:
the behavior of the entropy with respect to disjoint union, cartesian product, component restriction and dilation, shift mapping,
and some continuity properties with respect to Vietoris topology. As an example, it is shown that any self-similar structure of a fractal
given by a finite family of contractions gives rise to a notion of intrinsic topological entropy for subsets of the fractal.
A generalized notion of Bowen's entropy associated to any increasing sequence of compatible semimetrics on a topological space is introduced and some
of its basic properties are considered. As a special case for $1\leq p\leq\infty$ the Bowen $p$-entropy of sets of sequences of any metric space
is introduced. It is shown that the notions of generalized topological entropy and Bowen $\infty$-entropy for compact metric spaces coincide.

\textbf{MSC 2020.} 37B40, 54C70.

\textbf{Keywords.} topological entropy, Bowen entropy, space of infinite-sequences of a compact space.
\end{abstract}
\section{Introduction}\label{2005240804}
The notion of topological entropy for continuous maps on compact spaces have been introduced by Adler, Konheim, and
McAndrew in \cite{AdlerKonheimMcAndrew1} as a topological version of the measure theoretic entropy in
Ergodic Theory defined by Kolmogorov and Sinai \cite{Sinai1}.
Then Bowen \cite{Bowen1} defined a notion of entropy for uniformly continuous maps on arbitrary metric spaces and showed \cite{Bowen2}
that his notion coincides with the topological entropy on compact metric spaces. Also, Bowen's definition has been extended to uniformly
continuous maps on uniform spaces by Hood \cite{Hood1}. For more details on the subject and various notions of
entropy we refer the reader to \cite{Downarowicz1}.

The main aim of this note is to introduce a new generalized notion of topological entropy. Indeed we show that for any compact topological space
$X$ there is a \emph{well-behaved} and \emph{intrinsic} notion of entropy for arbitrary subsets of the infinite product space $X^\infty$.
It will be clear that for any continuous map $T:X\to X$ if $\ov{T}$ denotes the subset of $X^\infty$ of all orbits of $T$ then our entropy value $\r{H}(\ov{T})$ coincides with the classical topological entropy of $T$. This suggests that for an arbitrary noncontinuous map or set-valued
map $T$ we consider the value $\r{H}(\ov{T})$ as topological entropy of $T$.

In Section \ref{2004270842} we consider the main definition of this note. In Section \ref{2005240803} we consider some basic properties of
our notion of entropy. Among them are: the behavior of the entropy with respect to disjoint union, cartesian product,
component restriction and dilation, shift mapping, and some continuity properties with respect to Vietoris topology.
We show that for any compact Hausdorff space (finite or infinite) any real number $u$ with $0\leq u\leq\log|X|$ appears as the entropy of a
subset of $X^\infty$. We also consider some examples and show that any self-similar structure of a fractal given by a finite family of
contractions gives rise to a notion of intrinsic topological entropy for subsets of the fractal. In Section \ref{2005300733} we consider
a Bowen's notion of entropy for arbitrary subsets of any topological space endowed with an increasing sequence of compatible semimetrics
and we consider some elementary properties of this new notion. In Section \ref{2005240802} as an special case of the notion given 
in Section \ref{2005300733} we introduce a notion of Bowen $p$-entropy ($1\leq p\leq\infty$) for subsets of $X^\infty$ where $X$
is an arbitrary metric space. It will be clear that the Bowen $\infty$-entropy of $\ov{T}$ for a continuous map $T:X\to X$ 
coincides with the usual Bowen entropy of $T$. We also show that Bowen $\infty$-entropy of closed subsets of $X^\infty$ coincides with
the generalized topological entropy given in Section \ref{2004270842} when $X$ is a compact metric space.

\textbf{Notations.}
For a topological space $X$ we denote by $\ov{X}:=X^\infty$ the space of all sequences $(x_n)_{n\geq0}$ of elements of $X$
endowed with the product topology.
\section{The Main Definition}\label{2004270842}
Let us begin by some conventions. For open covers $\c{U},\c{V}$ of a topological space $X$ we write $\c{U}\preccurlyeq\c{V}$
if every member of $\c{V}$ is a subset of a member of $\c{U}$. $\bigvee_{i=1}^n\c{U}_i$ denotes the \emph{join}
of the finite family $\c{U}_1,\ldots,\c{U}_n$ of open covers of $X$ i.e. $\bigvee_{i=1}^n\c{U}_i:=\{\cap_{i=1}^n U_i:U_i\in\c{U}_i\}$.
If $\c{V}_i$ is an open cover of a space $X_i$ for $i=1,\ldots,n$ then the \emph{associated product cover} is defined to be the
open cover of $\prod_iX_i$ given by $\{\prod V_i:V_i\in\c{V}_i\}$.

Let $S$ be a nonempty subset of a space $X$. For an open cover $\c{U}=\{U_i\}_{i\in I}$
of $X$ we denote by $\r{N}_{S/X}(\c{U})$ the minimum of the cardinals $|J|$ where $J\subseteq I$ is such that $S\subseteq\cup_{i\in J}U_i$.
We denote the value $\r{N}_{X/X}(\c{U})$ simply by $\r{N}(\c{U})$.

Topological entropy of continuous mappings \cite{Walters1} is defined as follows:
Let $T:X\to X$ be a continuous mapping. For any $n\geq0$ denote by $T^{-n}\c{U}$ the open cover of $X$ given by $\{T^{-n}(U_i)\}_{i\in I}$. Let
\begin{equation}\label{2003311300}
\r{h}(T,\c{U}):=\limsup_{n\to\infty}\frac{1}{n}\log\r{N}(\vee_{i=0}^{n-1}T^{-i}\c{U}).
\end{equation}
It is well-known that the $\limsup$ in (\ref{2003311300}) is a limit and is equal to $\inf_n\frac{1}{n}\log\r{N}(\vee_{i=0}^{n-1}T^{-i}\c{U})$.
If $X$ is compact then the \emph{topological entropy of $T$} is defined to be the value
\begin{equation}\label{2003311359}
\r{h}(T):=\sup_\c{U}\r{h}(T,\c{U})
\end{equation}
where the supremum is taken over all open covers $\c{U}$ of $X$.

We generalize the classical definitions given by (\ref{2003311300}) and (\ref{2003311359}) as follows.
For an open cover $\c{U}=\{U_i\}_{i\in I}$ of $X$ and any integer $n\geq1$ let
$\ov{\c{U}}^n$ denote the open cover of $\ov{X}$ given by
$$\ov{\c{U}}^n:=\Big\{U_{i_0}\times\cdots\times U_{i_{n-1}}\times X\times X\times\cdots\big|i_0,\ldots,i_{n-1}\in I\Big\}.$$
\begin{definition}
\emph{Let $X$ be a compact space and $S$ be a subset of $\ov{X}$. For any open cover $\c{U}$ of $X$ the
\emph{topological entropy of $S$ relative to $\c{U}$} is defined to be
\begin{equation*}\label{2003062220}
\r{H}^\r{t}(S,\c{U}):=\limsup_{n\to\infty}\frac{1}{n}\log\r{N}_{S/\ov{X}}(\ov{\c{U}}^n).
\end{equation*}
The \emph{topological entropy of $S$} (as a subset of $\ov{X}$) is defined by
\begin{equation*}\label{2003311400}
\r{H}^\r{t}_X(S)=\r{H}^\r{t}(S):=\sup_\c{U}\r{H}^\r{t}(S,\c{U})
\end{equation*}
where the supremum is taken over all open covers $\c{U}$ of $X$.}
\end{definition}
For any (not necessarily continuous) mapping $T:X\to X$ let $\ov{\r{Gr}}T$ denote the subset of $\ov{X}$ given by
$\{(T^nx)_{n\geq0}:x\in X\}$. Similarly if $T$ is a set-valued mapping on $X$ let
$$\ov{\r{Gr}}T:=\Big\{(x_n)_{n\geq0}:x_0\in X, x_{n+1}\in Tx_n\Big\}\subseteq\ov{X}.$$
\begin{proposition}\label{200203062239}
\emph{Let $X$ be a compact space and $T:X\to X$ be a continuous mapping. Then for any open cover $\c{U}$ of $X$
we have $\r{h}(T,\c{U})=\r{H}^\r{t}(\ov{\r{Gr}}T,\c{U})$. Hence
$$\r{h}(T)=\r{H}^\r{t}(\ov{\r{Gr}}T).$$}
\end{proposition}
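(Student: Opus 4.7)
The plan is to show that for every $n\ge 1$ the two covering numbers appearing in the definitions of $\mathrm{h}(T,\mathcal{U})$ and $\mathrm{H}^{\mathrm{t}}(\overline{\mathrm{Gr}}\,T,\mathcal{U})$ are in fact equal, namely
\[
\mathrm{N}\bigl(\vee_{k=0}^{n-1}T^{-k}\mathcal{U}\bigr)=\mathrm{N}_{\overline{\mathrm{Gr}}\,T/\overline{X}}(\overline{\mathcal{U}}^n).
\]
Once this is established, taking $\frac{1}{n}\log$ and then $\limsup_{n\to\infty}$ gives the first assertion, and taking the supremum over open covers $\mathcal{U}$ of $X$ gives the second. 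There is no separate analytic step.

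First I would write $\mathcal{U}=\{U_i\}_{i\in I}$ and observe that both covers are naturally indexed by $n$-tuples $\mathbf{i}=(i_0,\dots,i_{n-1})\in I^n$: on the $X$-side we have $V_{\mathbf{i}}:=\bigcap_{k=0}^{n-1}T^{-k}U_{i_k}$, a typical member of $\vee_{k=0}^{n-1}T^{-k}\mathcal{U}$; on the $\overline{X}$-side we have $W_{\mathbf{i}}:=U_{i_0}\times\cdots\times U_{i_{n-1}}\times X\times X\times\cdots$, a typical member of $\overline{\mathcal{U}}^n$. The key elementary observation is the equivalence
\[
x\in V_{\mathbf{i}}\ \Longleftrightarrow\ T^k x\in U_{i_k}\text{ for }k=0,\dots,n-1\ \Longleftrightarrow\ (T^k x)_{k\ge 0}\in W_{\mathbf{i}},
\]
which uses only the definition of $T^{-k}U_{i_k}$ and the product structure of $W_{\mathbf{i}}$.

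From this equivalence it follows immediately that for any $J\subseteq I^n$,
\[
X\subseteq\bigcup_{\mathbf{i}\in J}V_{\mathbf{i}}\ \Longleftrightarrow\ \overline{\mathrm{Gr}}\,T\subseteq\bigcup_{\mathbf{i}\in J}W_{\mathbf{i}},
\]
since every point of $\overline{\mathrm{Gr}}\,T$ is of the form $(T^k x)_{k\ge 0}$ for some $x\in X$, and conversely each such sequence lies in some $W_{\mathbf{i}}$ exactly when $x$ lies in the corresponding $V_{\mathbf{i}}$. Hence the classes of $J\subseteq I^n$ witnessing the two covering numbers coincide, so the two minima are equal.

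The only subtlety worth flagging is making sure that the enumeration of the two covers by $I^n$ is really the same — i.e. that when we allow repetitions in $J$ or have $V_{\mathbf{i}}=V_{\mathbf{i}'}$ for $\mathbf{i}\neq\mathbf{i}'$, nothing goes wrong. This is fine because $\mathrm{N}(\cdot)$ counts the minimum cardinality of a subfamily of index-tuples, not of distinct open sets, and the equivalence above transports any such subfamily between the two settings tuple-by-tuple. I do not foresee any genuine obstacle; the statement is essentially a translation of the cover $\vee_{k=0}^{n-1}T^{-k}\mathcal{U}$ along the orbit map $x\mapsto(T^k x)_{k\ge 0}$, and compactness of $X$ is not even needed for the equality of covering numbers, only for the supremum in the definition of $\mathrm{h}(T)$ to match the one in $\mathrm{H}^{\mathrm{t}}$.
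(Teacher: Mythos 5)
Your proposal is correct and is essentially the paper's own proof: the paper simply asserts the identity $\r{N}(\vee_{i=0}^{n-1}T^{-i}\c{U})=\r{N}_{\ov{\r{Gr}}T/\ov{X}}(\ov{\c{U}}^n)$ as "easily checked," and your tuple-by-tuple correspondence $V_{\mathbf{i}}\leftrightarrow W_{\mathbf{i}}$ together with the orbit-map equivalence is exactly the verification of that identity. No issues.
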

\begin{proof}
It follows from the easily checked identity $\r{N}(\vee_{i=0}^{n-1}T^{-i}\c{U})=\r{N}_{\ov{\r{Gr}}T/\ov{X}}(\ov{\c{U}}^n)$.
\end{proof}
Proposition \ref{200203062239} justifies that $\r{H}^\r{t}_X(\ov{\r{Gr}}T)$ is a natural generalization of the concept of topological
entropy for any non-continuous (set-valued) mapping $T:X\to X$.
\section{Basic Properties of $\r{H}^\r{t}_X$ and some Examples}\label{2005240803}
In this section, we consider some elementary properties and examples of our generalized topological entropy.
Throughout this section, $X$ and $Y$ denote compact Hausdorff spaces.
\begin{theorem}
\emph{For any finite subset $S$ of $\ov{X}$ we have $\r{H}^\r{t}(S)=0$.}
\end{theorem}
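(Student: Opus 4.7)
The plan is to show that the covering numbers $\mathrm{N}_{S/\ov{X}}(\ov{\c{U}}^n)$ stay bounded by $|S|$ as $n$ grows, so that the normalization by $\frac{1}{n}$ kills the logarithm in the limit. The key observation is that each element of $S$ can be covered on its own by a single member of $\ov{\c{U}}^n$, regardless of $n$.

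More precisely, fix an open cover $\c{U}=\{U_i\}_{i\in I}$ of $X$ and a point $s=(s_m)_{m\geq 0}\in S$. For every $m\geq 0$ the cover $\c{U}$ supplies at least one index $i_m(s)\in I$ with $s_m\in U_{i_m(s)}$. Then for any $n\geq 1$ the set
\[
W_{s,n}:=U_{i_0(s)}\times\cdots\times U_{i_{n-1}(s)}\times X\times X\times\cdots
\]
belongs to $\ov{\c{U}}^n$ and contains $s$. Doing this for each of the finitely many points of $S$ yields
\[
S\subseteq\bigcup_{s\in S}W_{s,n},
\]
so $\r{N}_{S/\ov{X}}(\ov{\c{U}}^n)\leq|S|$ for every $n\geq 1$.

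Consequently
\[
\r{H}^\r{t}(S,\c{U})=\limsup_{n\to\infty}\frac{1}{n}\log\r{N}_{S/\ov{X}}(\ov{\c{U}}^n)\leq\limsup_{n\to\infty}\frac{\log|S|}{n}=0,
\]
while nonemptiness of $S$ gives $\r{N}_{S/\ov{X}}(\ov{\c{U}}^n)\geq 1$, hence $\r{H}^\r{t}(S,\c{U})\geq 0$. Taking the supremum over all open covers $\c{U}$ of $X$ produces $\r{H}^\r{t}(S)=0$.

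There is essentially no obstacle here: the argument uses nothing beyond the definition of the covers $\ov{\c{U}}^n$ and the fact that $|S|$ is independent of $n$. The point of the statement is mainly to check that the new invariant assigns the expected trivial value to finite sets, which mirrors the classical fact that topological entropy vanishes on finite spaces or periodic orbits.
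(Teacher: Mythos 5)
Your argument is correct and is exactly the paper's proof, just with the bound $\r{N}_{S/\ov{X}}(\ov{\c{U}}^n)\leq|S|$ spelled out explicitly by choosing one member of $\ov{\c{U}}^n$ per point of $S$. Nothing to add.
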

\begin{proof}
For any open cover $\c{U}$ of $X$ and every $n\geq0$ we have $\r{N}_{S/\ov{X}}(\ov{\c{U}}^n)\leq|S|$ and hence $\r{H}^\r{t}(S,\c{U})=0$.
Thus $\r{H}^\r{t}(S)=0$.
\end{proof}
\begin{lemma}\label{2003312345}
\emph{Let $S\subseteq\ov{X}$. For open covers $\c{U}\preccurlyeq\c{V}$ of $X$ we have $\r{H}^\r{t}(S,\c{U})\leq\r{H}^\r{t}(S,\c{V})$.}
\end{lemma}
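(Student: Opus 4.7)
The plan is to establish the inequality at the level of the covering numbers $\mathrm{N}_{S/\ov{X}}(\ov{\c{U}}^n)$ and $\mathrm{N}_{S/\ov{X}}(\ov{\c{V}}^n)$ for each fixed $n$, and then pass to the $\limsup$. Concretely, I would first show that the relation $\c{U}\preccurlyeq\c{V}$ lifts to the $n$-th ``cylinder'' covers of $\ov{X}$, that is, $\ov{\c{U}}^n\preccurlyeq\ov{\c{V}}^n$.

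For this step, write $\c{U}=\{U_i\}_{i\in I}$ and $\c{V}=\{V_j\}_{j\in J}$, and use the hypothesis to choose (by the axiom of choice on a per-index basis) a function $\phi:J\to I$ with $V_j\subseteq U_{\phi(j)}$ for every $j\in J$. Given any generator $V_{j_0}\times\cdots\times V_{j_{n-1}}\times X\times X\times\cdots$ of $\ov{\c{V}}^n$, it is contained in the generator $U_{\phi(j_0)}\times\cdots\times U_{\phi(j_{n-1})}\times X\times X\times\cdots$ of $\ov{\c{U}}^n$, which verifies $\ov{\c{U}}^n\preccurlyeq\ov{\c{V}}^n$.

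Next I would use the elementary monotonicity of the relative covering number with respect to $\preccurlyeq$: if $\c{A}\preccurlyeq\c{B}$ are open covers of $\ov{X}$ and $S\subseteq\ov{X}$, then any subfamily of $\c{B}$ covering $S$ can be replaced element-by-element by enclosing members of $\c{A}$ to obtain a subfamily of $\c{A}$ of the same cardinality that still covers $S$; hence $\mathrm{N}_{S/\ov{X}}(\c{A})\leq\mathrm{N}_{S/\ov{X}}(\c{B})$. Applying this with $\c{A}=\ov{\c{U}}^n$ and $\c{B}=\ov{\c{V}}^n$ yields $\mathrm{N}_{S/\ov{X}}(\ov{\c{U}}^n)\leq\mathrm{N}_{S/\ov{X}}(\ov{\c{V}}^n)$ for every $n\geq1$.

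Finally, taking $\frac{1}{n}\log$ of both sides and then the $\limsup$ as $n\to\infty$ gives the desired inequality $\r{H}^\r{t}(S,\c{U})\leq\r{H}^\r{t}(S,\c{V})$. There is no serious obstacle here; the only thing to be slightly careful about is that the cylinder-cover refinement step uses the same indexing function $\phi$ coordinate-wise, which is why $\ov{\c{U}}^n\preccurlyeq\ov{\c{V}}^n$ holds uniformly in $n$.
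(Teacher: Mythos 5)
Your proof is correct and follows the same route as the paper: the paper simply asserts the inequality $\r{N}_{S/\ov{X}}(\ov{\c{U}}^n)\leq\r{N}_{S/\ov{X}}(\ov{\c{V}}^n)$ as obvious and concludes, while you supply the (routine) verification that $\ov{\c{U}}^n\preccurlyeq\ov{\c{V}}^n$ via a choice function and then invoke monotonicity of the covering number. Nothing is missing.
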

\begin{proof}
It follows immediately from the obvious inequality $\r{N}_{S/\ov{X}}(\ov{\c{U}}^n)\leq\r{N}_{S/\ov{X}}(\ov{\c{V}}^n)$.
\end{proof}
\begin{theorem}\label{2003312302}
\emph{For $S\subseteq T\subseteq\ov{X}$ we have
$$\r{H}^\r{t}(S)\leq\r{H}^\r{t}(T).$$
If $S$ is finite then $\r{H}^\r{t}(T\setminus S)=\r{H}^\r{t}(T)$.}
\end{theorem}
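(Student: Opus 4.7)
The plan is to prove the two assertions in sequence, with the monotonicity part doing most of the work. For the first inequality $\r{H}^\r{t}(S)\leq\r{H}^\r{t}(T)$, I would fix an arbitrary open cover $\c{U}$ of $X$ and observe that any subfamily of $\ov{\c{U}}^n$ whose union contains $T$ automatically contains $S\subseteq T$; this yields the pointwise bound $\r{N}_{S/\ov{X}}(\ov{\c{U}}^n)\leq\r{N}_{T/\ov{X}}(\ov{\c{U}}^n)$. Taking logarithms, dividing by $n$, and passing to the $\limsup$ gives $\r{H}^\r{t}(S,\c{U})\leq\r{H}^\r{t}(T,\c{U})$, after which the supremum over $\c{U}$ delivers the desired inequality.

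For the second statement, the inequality $\r{H}^\r{t}(T\setminus S)\leq\r{H}^\r{t}(T)$ is immediate from the monotonicity just established, applied to $T\setminus S\subseteq T$. The substantive direction is $\r{H}^\r{t}(T)\leq\r{H}^\r{t}(T\setminus S)$. I would first dispose of the trivial case $T\setminus S=\emptyset$: then $T$ is finite and the preceding theorem forces both entropies to vanish. Otherwise, set $S':=T\cap S$, which is a finite set, and decompose $T=(T\setminus S)\cup S'$. For any open cover $\c{U}$ of $X$ and any $n\geq 1$, combining an optimal subcover of $T\setminus S$ selected from $\ov{\c{U}}^n$ with at most $|S'|$ additional members covering the finitely many points of $S'$ gives
$$\r{N}_{T/\ov{X}}(\ov{\c{U}}^n)\leq\r{N}_{(T\setminus S)/\ov{X}}(\ov{\c{U}}^n)+|S'|.$$

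Applying the elementary bound $\log(a+b)\leq\log 2+\log\max(a,b)$ together with the fact that $\r{N}_{(T\setminus S)/\ov{X}}(\ov{\c{U}}^n)\geq 1$ (since $T\setminus S$ is nonempty), then dividing by $n$ and taking $\limsup$, the additive constant $|S'|$ and the $\log 2$ term both become negligible, yielding $\r{H}^\r{t}(T,\c{U})\leq\r{H}^\r{t}(T\setminus S,\c{U})$. Taking the supremum over $\c{U}$ completes the proof.

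The only mildly delicate step is this final asymptotic passage, where one has to check that the finite additive correction $|S'|$ is washed out by the $\frac{1}{n}\log(\cdot)$ normalization; everything else is direct bookkeeping against the definition, and no new topological or compactness input is needed beyond what is already built into the covers $\ov{\c{U}}^n$.
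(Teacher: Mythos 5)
Your proposal is correct and follows essentially the same route as the paper: the same pointwise bound $\r{N}_{S/\ov{X}}(\ov{\c{U}}^n)\leq\r{N}_{T/\ov{X}}(\ov{\c{U}}^n)$ for monotonicity, and the same counting inequality $\r{N}_{T/\ov{X}}(\ov{\c{U}}^n)\leq\r{N}_{(T\setminus S)/\ov{X}}(\ov{\c{U}}^n)+|S|$ for the second part. Your version is marginally more careful than the paper's (you spell out why the additive constant is washed out by the $\frac{1}{n}\log(\cdot)$ normalization and you handle the degenerate case $T\setminus S=\emptyset$, which the paper leaves implicit), but the substance is identical.
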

\begin{proof}
Let $\c{U}$ be an open cover for $X$. We have $\r{N}_{S/\ov{X}}(\ov{\c{U}}^n)\leq\r{N}_{T/\ov{X}}(\ov{\c{U}}^n)$. This implies that
$\r{H}^\r{t}(S,\c{U})\leq\r{H}^\r{t}(T,\c{U})$. Thus $\r{H}^\r{t}(S)\leq\r{H}^\r{t}(T)$. Suppose now that $S$ is finite. We have
$$\r{N}_{T/\ov{X}}(\ov{\c{U}}^n)\leq|S|+\r{N}_{(T\setminus S)/\ov{X}}(\ov{\c{U}}^n)\hspace{10mm}(n\geq0).$$
Thus $\r{H}^\r{t}(T,\c{U})\leq\r{H}^\r{t}(T\setminus S,\c{U})$. The proof is complete.
\end{proof}
For a set $S$ in a space we denote by $\r{cls}(S)$ its closure.
\begin{lemma}\label{2006020745}
\emph{Let $\c{U},\c{V}$ be open coverings of $X$ such that for every $V\in\c{V}$ there exists $U\in\c{U}$ with
$\r{cls}(V)\subseteq U$. Then for any $S\subseteq X$ we have
$$\r{N}_{\r{cls}(S)/X}(\c{U})\leq\r{N}_{S/X}(\c{V}).$$}
\end{lemma}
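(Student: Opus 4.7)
The plan is to reduce the statement to the fact that closure commutes with \emph{finite} unions. If $\r{N}_{S/X}(\c{V})=\infty$ there is nothing to prove, so assume $\r{N}_{S/X}(\c{V})=k<\infty$. First I would choose a subfamily $V_1,\ldots,V_k\in\c{V}$ witnessing this minimum, i.e.\ such that $S\subseteq V_1\cup\cdots\cup V_k$. Then I would use the hypothesis to pick, for each $j=1,\ldots,k$, an element $U_j\in\c{U}$ with $\r{cls}(V_j)\subseteq U_j$.

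The key step is the inclusion
\begin{equation*}
\r{cls}(S)\subseteq\r{cls}(V_1\cup\cdots\cup V_k)=\r{cls}(V_1)\cup\cdots\cup\r{cls}(V_k)\subseteq U_1\cup\cdots\cup U_k,
\end{equation*}
where the middle equality uses exactly that there are only \emph{finitely} many $V_j$'s (closure distributes over finite unions in any topological space). This exhibits a $k$-element subfamily of $\c{U}$ covering $\r{cls}(S)$, so $\r{N}_{\r{cls}(S)/X}(\c{U})\leq k=\r{N}_{S/X}(\c{V})$, which is the desired inequality.

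There is really no substantial obstacle here; the only thing worth flagging explicitly is the reliance on finiteness of the subcover, which is why the trivial reduction to the case $\r{N}_{S/X}(\c{V})<\infty$ must be made at the outset. No compactness or Hausdorffness of $X$ is needed for this lemma, and the argument treats arbitrary $S\subseteq X$ (not assumed closed) uniformly.
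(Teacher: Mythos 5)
Your proof is correct and follows essentially the same route as the paper: take a minimal finite subfamily of $\c{V}$ covering $S$, pass to closures using that closure commutes with finite unions, and enlarge each $\r{cls}(V_j)$ to a $U_j\in\c{U}$. The only difference is that you explicitly flag the reduction to the finite case and the role of finiteness, which the paper leaves implicit.
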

\begin{proof}
Suppose that $V_1,\ldots,V_k$ in $\c{V}$ be such that $S\subseteq\cup_{i=1}^kV_i$. Then $\r{cls}(S)\subseteq\cup_{i=1}^k\r{cls}(V_i)$.
Thus if $U_i$ in $\c{U}$ be such that $\r{cls}(V_i)\subseteq U_i$ then $\r{cls}(S)\subseteq\cup_{i=1}^kU_i$. This implies that
$\r{N}_{\r{cls}(S)/X}(\c{U})\leq k$. The proof is complete.
\end{proof}
\begin{theorem}
\emph{For every $S\subseteq\ov{X}$ we have
$$\r{H}^\r{t}\big(\r{cls}(S)\big)=\r{H}^\r{t}\big(S\big).$$}
\end{theorem}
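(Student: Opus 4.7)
The plan is to prove the nontrivial inequality $\r{H}^\r{t}(\r{cls}(S))\leq\r{H}^\r{t}(S)$; the reverse $\r{H}^\r{t}(S)\leq\r{H}^\r{t}(\r{cls}(S))$ is immediate from the monotonicity part of Theorem \ref{2003312302}, since $S\subseteq\r{cls}(S)$. So the entire content is to show that refining to closures does not increase the covering numbers too much.

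The strategy is to feed Lemma \ref{2006020745}, applied in $\ov{X}$, with a suitably chosen \emph{shrinking} of any given open cover $\c{U}$ of $X$. Concretely, fix an open cover $\c{U}$ of $X$. By compactness I may assume $\c{U}$ is finite. Since $X$ is compact Hausdorff, hence normal, the classical shrinking lemma produces an open cover $\c{V}=\{V_U\}_{U\in\c{U}}$ of $X$ with $\r{cls}(V_U)\subseteq U$ for every $U\in\c{U}$. Now I lift this to the infinite product: for any $V_{i_0},\ldots,V_{i_{n-1}}\in\c{V}$ the closure of $V_{i_0}\times\cdots\times V_{i_{n-1}}\times X\times X\times\cdots$ in $\ov{X}$ equals $\r{cls}(V_{i_0})\times\cdots\times\r{cls}(V_{i_{n-1}})\times X\times X\times\cdots$, which is contained in $U_{i_0}\times\cdots\times U_{i_{n-1}}\times X\times X\times\cdots$, a member of $\ov{\c{U}}^n$. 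Hence the pair of open covers $\ov{\c{U}}^n,\ov{\c{V}}^n$ of $\ov{X}$ satisfies the hypothesis of Lemma \ref{2006020745}.

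Applying that lemma to $S\subseteq\ov{X}$ yields, for every $n\geq1$,
$$\r{N}_{\r{cls}(S)/\ov{X}}(\ov{\c{U}}^n)\leq\r{N}_{S/\ov{X}}(\ov{\c{V}}^n).$$
Taking $\frac{1}{n}\log$ and then $\limsup_{n\to\infty}$ gives $\r{H}^\r{t}(\r{cls}(S),\c{U})\leq\r{H}^\r{t}(S,\c{V})\leq\r{H}^\r{t}(S)$, and taking the supremum over $\c{U}$ completes the proof.

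The step I expect to require the most care is the shrinking construction: it uses the compact Hausdorff hypothesis (via normality) in an essential way, and it is where the product structure of $\ov{X}$ with the factor $X\times X\times\cdots$ in positions beyond $n-1$ is crucial, since closures of such cylinder sets remain cylinder sets of the same shape. Everything else is routine bookkeeping with $\r{N}_{\bullet/\ov{X}}$ and the definition of $\r{H}^\r{t}$.
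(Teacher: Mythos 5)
Your proof is correct and follows essentially the same route as the paper's: both shrink $\c{U}$ to a cover $\c{V}$ whose members have closures inside members of $\c{U}$ (using normality of the compact Hausdorff space $X$), observe that closures of the cylinder sets in $\ov{\c{V}}^n$ lie in members of $\ov{\c{U}}^n$, and then apply Lemma \ref{2006020745} together with the monotonicity from Theorem \ref{2003312302}.
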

\begin{proof}
Let $\c{U}$ be an arbitrary open cover for $X$. Since $X$ is compact and Hausdorff there exists an open cover $\c{V}$ of $X$
such that for every $V\in\c{V}$ there is $U\in\c{U}$ with $\r{cls}(V)\subseteq U$. It follows that for every $n\geq1$ the closure of every member
of $\ov{\c{V}}^n$ is contained in a member of $\ov{\c{U}}^n$. Thus by Lemma \ref{2006020745} we have
$$\r{N}_{\r{cls}(S)/\ov{X}}(\ov{\c{U}}^n)\leq\r{N}_{S/\ov{X}}(\ov{\c{V}}^n).$$
This implies that $\r{H}^\r{t}\big(\r{cls}(S),\c{U}\big)\leq\r{H}^\r{t}\big(S,\c{V}\big)$ and hence $\r{H}^\r{t}\big(\r{cls}(S),\c{U}\big)\leq\r{H}^\r{t}\big(S\big)$. Thus we have
$\r{H}^\r{t}\big(\r{cls}(S)\big)\leq\r{H}^\r{t}\big(S\big)$. The revers direction follows from Theorem \ref{2003312302}.
\end{proof}
\begin{theorem}\label{2004012335}
\emph{Let $X$ be a discrete finite space and $z\in X$. Let $\tilde{X}_z$ denote the set of those sequences $(x_n)_n$ in $\ov{X}$ such that
there exists $n_0$ with $x_n=z$ for every $n\geq n_0$. Then
$$\r{H}^\r{t}(\tilde{X}_z)=\r{H}^\r{t}(\ov{X})=\log|X|.$$
In particular for every $S\subseteq\ov{X}$, $\r{H}^\r{t}(S)\leq\log|X|$.}
\end{theorem}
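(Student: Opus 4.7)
The plan is to exploit the discreteness of $X$ to collapse the supremum defining $\r{H}^\r{t}$ to the single finest cover. Let $\c{U}_0:=\{\{x\}:x\in X\}$, which is a legitimate open cover because every singleton is open. For any open cover $\c{W}$ of $X$, each singleton lies in some member of $\c{W}$, so $\c{W}\preccurlyeq\c{U}_0$; Lemma \ref{2003312345} then gives $\r{H}^\r{t}(S,\c{W})\leq\r{H}^\r{t}(S,\c{U}_0)$ for every $S\subseteq\ov{X}$, and the supremum defining $\r{H}^\r{t}(S)$ reduces to the single value $\r{H}^\r{t}(S,\c{U}_0)$.

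Next I would analyze $\ov{\c{U}_0}^n$, whose members are the $|X|^n$ pairwise disjoint cylinders $\{x_0\}\times\cdots\times\{x_{n-1}\}\times X\times X\times\cdots$, one for each prefix in $X^n$. Consequently $\r{N}_{S/\ov{X}}(\ov{\c{U}_0}^n)$ equals precisely the number of length-$n$ prefixes realized by sequences in $S$. For $S=\tilde{X}_z$ every prefix $(x_0,\ldots,x_{n-1})\in X^n$ is realized by extending it with $z,z,z,\ldots$, producing the full count $|X|^n$, and hence $\r{H}^\r{t}(\tilde{X}_z,\c{U}_0)=\lim_n\frac{1}{n}\log|X|^n=\log|X|$.

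For the upper bound I would invoke the trivial estimate $\r{N}_{S/\ov{X}}(\ov{\c{U}_0}^n)\leq|X|^n$, which holds for every subset $S$; combined with the reduction in the first step this yields $\r{H}^\r{t}(S)\leq\log|X|$ for every $S\subseteq\ov{X}$, proving the final ``in particular'' clause. Applied to $S=\ov{X}$ this gives $\r{H}^\r{t}(\ov{X})\leq\log|X|$, while the reverse inequality is immediate from the monotonicity of Theorem \ref{2003312302} applied to the inclusion $\tilde{X}_z\subseteq\ov{X}$, completing the chain of equalities.

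I do not anticipate a serious obstacle: the whole argument rests on the two very soft observations that discreteness makes singletons open (so the entropy supremum is attained at the finest cover) and that the eventually-$z$ condition imposes no restriction whatsoever on finite initial segments.
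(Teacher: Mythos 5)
Your proposal is correct and follows essentially the same route as the paper: both reduce the supremum over covers to the maximal (singleton) cover via Lemma \ref{2003312345}, count the realized prefixes to get $|X|^n$ for $\tilde{X}_z$ and $\ov{X}$, and obtain the general upper bound from the same counting together with the monotonicity of Theorem \ref{2003312302}. Your write-up merely makes the prefix-counting and disjoint-cylinder observations more explicit than the paper does.
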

\begin{proof}
Let $\c{U}$ denote the maximal open cover of $X$ with respect to $\preccurlyeq$ i.e. $\c{U}=\{\{x\}\}_{x\in X}$.
It follows from Lemma \ref{2003312345} that $\r{H}^\r{t}(\ov{X})=\r{H}^\r{t}(\ov{X},\c{U})=\lim_{n\to\infty}\frac{1}{n}\log|X|^n=\log|X|$.
The same proof shows that $\r{H}^\r{t}(\tilde{X}_z)=\log|X|$. The last part follows from Theorem \ref{2003312302}.
\end{proof}
\begin{theorem}\label{2005240830}
\emph{Let $X$ be infinite and for $z\in X$ let $\tilde{X}_z\subset\ov{X}$ be as in Theorem \ref{2004012335}.
Then $$\r{H}^\r{t}(\ov{X})=\r{H}^\r{t}(\tilde{X}_z)=\infty.$$}
\end{theorem}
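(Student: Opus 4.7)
The plan is to exploit the infiniteness of $X$ together with the Hausdorff property to manufacture, for each integer $N$, a finite open cover $\c{U}_N$ of $X$ that separates $N$ prescribed points lying in $X\setminus\{z\}$. Once such a cover is in hand, a counting argument will give $\r{N}_{\tilde{X}_z/\ov{X}}(\ov{\c{U}}_N^n)\geq N^n$, hence $\r{H}^\r{t}(\tilde{X}_z,\c{U}_N)\geq\log N$, and letting $N\to\infty$ finishes the proof. The bound $\r{H}^\r{t}(\ov{X})=\infty$ is then automatic from Theorem \ref{2003312302} since $\tilde{X}_z\subseteq\ov{X}$.

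Concretely, I would first fix $N\geq1$, pick pairwise distinct points $x_1,\ldots,x_N\in X\setminus\{z\}$ (possible because $X$ is infinite), and use the Hausdorff property to choose pairwise disjoint open sets $W_1,\ldots,W_N$ with $x_i\in W_i$ and $z\notin W_i$ for each $i$. Setting $U_0:=X\setminus\{x_1,\ldots,x_N\}$ (open, since finite subsets of a Hausdorff space are closed) and $U_i:=W_i$, I obtain an open cover $\c{U}_N:=\{U_0,U_1,\ldots,U_N\}$ with the crucial \emph{separation property} that $x_i\in U_j$ holds if and only if $j=i$.

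Next, for any $n\geq1$ and any function $f:\{0,\ldots,n-1\}\to\{1,\ldots,N\}$, consider the sequence
\[
s_f:=(x_{f(0)},x_{f(1)},\ldots,x_{f(n-1)},z,z,z,\ldots)\in\tilde{X}_z.
\]
If $U_{i_0}\times\cdots\times U_{i_{n-1}}\times X\times X\times\cdots$ is a member of $\ov{\c{U}}_N^n$ containing $s_f$, then $x_{f(k)}\in U_{i_k}$ for each $k$, which by the separation property forces $i_k=f(k)$. Thus the $N^n$ sequences $s_f$ require $N^n$ distinct members of $\ov{\c{U}}_N^n$ to cover them, giving $\r{N}_{\tilde{X}_z/\ov{X}}(\ov{\c{U}}_N^n)\geq N^n$ and therefore $\r{H}^\r{t}(\tilde{X}_z,\c{U}_N)\geq\log N$. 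Since $N$ is arbitrary this yields $\r{H}^\r{t}(\tilde{X}_z)=\infty$, and combined with $\tilde{X}_z\subseteq\ov{X}$ and Theorem \ref{2003312302} we conclude $\r{H}^\r{t}(\ov{X})=\infty$ as well.

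There is no serious obstacle here; the only point that needs a moment of care is producing the cover with the separation property, which is handled by the Hausdorff-plus-infinite hypothesis. The rest is the combinatorial lower bound $N^n$ coming from the injection $f\mapsto s_f$ into $\tilde{X}_z$ and the fact that distinct $f$'s cannot share a covering box.
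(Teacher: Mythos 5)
Your proof is correct and follows essentially the same route as the paper: fix $N$ distinct points, build a finite open cover separating them (the paper merely asserts such a cover exists, while you construct it explicitly from the Hausdorff hypothesis), and count the $N^n$ sequences $(x_{f(0)},\ldots,x_{f(n-1)},z,z,\ldots)\in\tilde{X}_z$ to get the lower bound $\r{H}^\r{t}(\tilde{X}_z,\c{U}_N)\geq\log N$. The only difference is that you supply the details the paper leaves implicit.
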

\begin{proof}
Let $k\geq2$ be arbitrary and let $x_1,\ldots,x_k$ be distinct elements of $X$. There is an open cover $\c{U}$ of $X$ such that every member
of $\c{U}$ contains at most only one of the $x_1,\ldots,x_k$. We have
$$\r{H}^\r{t}(\tilde{X}_z)\geq\r{H}^\r{t}(\tilde{X}_z,\c{U})\geq\lim_{n\to\infty}\frac{1}{n}\log k^n=\log k.$$
Thus $\r{H}^\r{t}(\tilde{X}_z)=\infty$.
\end{proof}
\begin{theorem}\label{2004022145}
\emph{Let $Y$ be a (closed) subspace of $X$ and $S\subseteq\ov{Y}$. Then
$$\r{H}^\r{t}_Y(S)=\r{H}^\r{t}_X(S).$$}
\end{theorem}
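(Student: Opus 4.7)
The plan is to show that each side of the claimed equality can be bounded by the other by matching open covers of $X$ with open covers of $Y$ in a way that preserves the relevant counting numbers $\mathrm{N}_{S/\ov{X}}(\ov{\c{U}}^n)$ and $\mathrm{N}_{S/\ov{Y}}(\ov{\c{V}}^n)$. Throughout, I will use that $S\subseteq\ov{Y}$, so every coordinate of every sequence in $S$ lies in $Y$; and that $Y$, being compact and sitting inside the Hausdorff space $X$, is closed in $X$ (this is what the parenthetical "closed" is for).

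For the inequality $\r{H}^\r{t}_Y(S)\leq\r{H}^\r{t}_X(S)$, I would start from an arbitrary open cover $\c{U}=\{U_i\}_{i\in I}$ of $X$ and restrict it to $Y$, producing the open cover $\c{V}:=\{U_i\cap Y\}_{i\in I}$ of $Y$. Given any subfamily of $\ov{\c{U}}^n$ that covers $S$ in $\ov{X}$, intersecting each of its members with $\ov{Y}$ yields a subfamily of $\ov{\c{V}}^n$ of the same cardinality that still covers $S$ (since $S\subseteq\ov{Y}$). Hence $\r{N}_{S/\ov{Y}}(\ov{\c{V}}^n)\leq\r{N}_{S/\ov{X}}(\ov{\c{U}}^n)$, which forces $\r{H}^\r{t}(S,\c{V})\leq\r{H}^\r{t}(S,\c{U})$ and, after taking the supremum on the right over all $\c{U}$, gives the desired bound.

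For the reverse inequality $\r{H}^\r{t}_X(S)\leq\r{H}^\r{t}_Y(S)$, I would start with an arbitrary open cover $\c{V}=\{V_j\}_{j\in J}$ of $Y$. Write each $V_j=U_j\cap Y$ with $U_j$ open in $X$, and then enlarge to the open cover $\c{U}:=\{U_j\}_{j\in J}\cup\{X\setminus Y\}$ of $X$, which is legitimate because $Y$ is closed. The crucial observation is now that no member of $\ov{\c{U}}^n$ using the factor $X\setminus Y$ in any of its first $n$ coordinates can contain any point of $S\subseteq\ov{Y}$. Consequently, every minimal subcover of $S$ by members of $\ov{\c{U}}^n$ consists solely of products $U_{j_0}\times\cdots\times U_{j_{n-1}}\times X\times X\times\cdots$, and each such product meets $\ov{Y}$ exactly in the corresponding member $V_{j_0}\times\cdots\times V_{j_{n-1}}\times Y\times Y\times\cdots$ of $\ov{\c{V}}^n$. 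Therefore $\r{N}_{S/\ov{X}}(\ov{\c{U}}^n)\leq\r{N}_{S/\ov{Y}}(\ov{\c{V}}^n)$, giving $\r{H}^\r{t}(S,\c{U})\leq\r{H}^\r{t}(S,\c{V})$, and taking the supremum on the left over all $\c{V}$ completes the argument.

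The main obstacle is the second direction, specifically the need to guarantee that the auxiliary open cover $\c{U}$ of $X$ produced from $\c{V}$ does not introduce spurious small subcovers of $S$. This is exactly where closedness of $Y$ (hence openness of $X\setminus Y$) enters: it lets me enlarge $\{U_j\}$ to a bona fide open cover of $X$ by appending $X\setminus Y$, while the containment $S\subseteq\ov{Y}$ prevents this new "garbage" set from being useful in covering $S$. Everything else is bookkeeping with the definitions.
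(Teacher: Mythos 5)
Your two cover constructions (restricting an open cover of $X$ to $Y$, and extending an open cover of $Y$ to one of $X$ by appending $X\setminus Y$) are exactly the ones the paper uses, and the counting facts you extract from them are correct. The gap is in how you assemble them: you have attached each construction to the wrong inequality, so neither of your concluding ``take the supremum'' steps is actually licensed. Since $\r{H}^\r{t}_X(S)$ is a supremum over \emph{all} open covers of $X$ and, by Lemma \ref{2003312345}, finer covers give larger relative entropy, bounding $\r{H}^\r{t}_X(S)$ from above requires controlling an arbitrary cover of $X$, or at least a family cofinal under refinement. Your second direction only establishes $\r{H}^\r{t}(S,\c{U})\leq\r{H}^\r{t}(S,\c{V})$ for covers $\c{U}$ of the special form $\{U_j\}_j\cup\{X\setminus Y\}$, and these are \emph{not} cofinal: an arbitrary open cover $\c{W}$ of $X$ need not have any member containing $X\setminus Y$, so no special cover refines $\c{W}$, and ``taking the supremum on the left over all $\c{V}$'' bounds only the supremum over the special covers, not $\r{H}^\r{t}_X(S)$. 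Your first direction has the mirror-image (milder) defect: it bounds $\r{H}^\r{t}(S,\c{V})$ only for covers $\c{V}$ of $Y$ of the form $\c{U}\cap Y$, and you never argue that these exhaust (or are cofinal among) the open covers of $Y$ --- which is true, but requires precisely the extension-by-$X\setminus Y$ device you reserve for the other direction.

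The repair is to swap the pairings, which is what the paper does. Starting from an \emph{arbitrary} cover $\c{U}$ of $X$, one shows the equality $\r{N}_{S/\ov{X}}(\ov{\c{U}}^n)=\r{N}_{S/\ov{Y}}(\ov{\c{U}\cap Y}^n)$: you proved ``$\geq$'' by intersecting a subcover with $\ov{Y}$; the ``$\leq$'' half, which you omit, comes from lifting a subcover of $S$ by members of $\ov{\c{U}\cap Y}^n$ to the containing members of $\ov{\c{U}}^n$. This gives $\r{H}^\r{t}(S,\c{U})\leq\r{H}^\r{t}_Y(S)$ for every $\c{U}$, hence $\r{H}^\r{t}_X(S)\leq\r{H}^\r{t}_Y(S)$. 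Starting from an \emph{arbitrary} cover $\c{V}$ of $Y$ and extending it to $\c{U}=\{U_j\}_j\cup\{X\setminus Y\}$, your observation that members involving $X\setminus Y$ miss $S$ gives $\r{N}_{S/\ov{Y}}(\ov{\c{V}}^n)=\r{N}_{S/\ov{X}}(\ov{\c{U}}^n)$, hence $\r{H}^\r{t}(S,\c{V})\leq\r{H}^\r{t}_X(S)$ for every $\c{V}$ and so $\r{H}^\r{t}_Y(S)\leq\r{H}^\r{t}_X(S)$. All of your ingredients suffice once reassembled this way; as written, however, the quantifier bookkeeping in both directions, and fatally in the second, does not close.
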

\begin{proof}
Let $\c{U}=\{U_i\}$ be an open cover for $X$. Then $\c{U}\cap Y:=\{U_i\cap Y\}$
is an open cover for $Y$ and we have $\r{N}_{S/\ov{X}}(\ov{\c{U}}^n)=\r{N}_{S/\ov{Y}}(\ov{\c{U}\cap Y}^n)$.
This implies that $\r{H}^\r{t}(S,\c{U})=\r{H}^\r{t}(S,\c{U}\cap Y)$ and hence $\r{H}^\r{t}_X(S)\leq\r{H}^\r{t}_Y(S)$.
Let $\c{V}=\{V_i\}$ be an open cover for $Y$.
Let $U_i$ denote an open subset of $X$ with $V_i=U_i\cap Y$. Then $\c{U}:=\{U_i\}\cup\{X\setminus Y\}$ is an open cover for $X$ and
we have $\r{N}_{S/\ov{X}}(\ov{\c{U}}^n)=\r{N}_{S/\ov{Y}}(\ov{\c{V}}^n)$.
This implies that $\r{H}^\r{t}(S,\c{U})=\r{H}^\r{t}(S,\c{V})$ and hence $\r{H}^\r{t}_X(S)\geq\r{H}^\r{t}_Y(S)$.
The proof is complete.
\end{proof}
Here is another proof of Theorem \ref{2005240830}: For any $k$ let $Y=\{x_1,\ldots,x_{k-1},z\}$ be a subset of
$X$ with $k$ elements. By Theorem \ref{2004012335}, $\r{H}^\r{t}_Y(\tilde{Y}_z)=\log k$, by Theorem \ref{2004022145},
$\r{H}^\r{t}_X(\tilde{Y}_z)=\r{H}^\r{t}_Y(\tilde{Y}_z)$, and by Theorem \ref{2003312302},
$\r{H}^\r{t}_X(\tilde{X}_z)\geq\r{H}^\r{t}_X(\tilde{Y}_z)$. Thus $\r{H}^\r{t}_X(\tilde{X}_z)=\infty$.

We now want to show that every positive real number less than $\log|X|$ appears as the entropy of a subset of $\ov{X}$.
For this aim we need the following two lemmas.
\begin{lemma}\label{2005240840}
\emph{Suppose that $X$ is finite and $z\in X$. Let $r\leq1$ be a positive real number. Suppose that there exist two sequences $(p_k)_k,(q_k)_k$
of natural numbers satisfying the following conditions:
\begin{enumerate}
\item[(i)] $p_k\leq q_k$,  $p_{k}<p_{k+1}$, and $q_{k}<q_{k+1}$.
\item[(ii)] $(q_{k+1}-q_k)\geq(p_{k+1}-p_k)$ and $\frac{p_k}{q_k}\leq\frac{p_{k+1}}{q_{k+1}}$.
\item[(iii)] $\sup_k\frac{p_k}{q_k}=r$.
\end{enumerate}
Then there exists a subset $S_r\subseteq\ov{X}$ such that $\r{H}^\r{t}(S_r)=r\log|X|$.}
\end{lemma}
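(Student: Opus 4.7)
The plan is to construct $S_r$ as a ``coordinate-restricted'' subset of $\ov{X}$: choose a set $A\subseteq\{0,1,2,\ldots\}$ of ``free'' indices and put
$$S_r:=\big\{(x_n)_{n\geq0}\in\ov{X}:x_n=z\text{ for every }n\notin A\big\}.$$
With the finest open cover $\c{U}:=\{\{x\}\}_{x\in X}$ of the discrete finite space $X$, a direct count gives $\r{N}_{S_r/\ov{X}}(\ov{\c{U}}^n)=|X|^{a_n}$, where $a_n:=|A\cap\{0,1,\ldots,n-1\}|$. Every open cover of $X$ is refined by $\c{U}$, so Lemma \ref{2003312345} ensures $\r{H}^\r{t}(S_r)=\r{H}^\r{t}(S_r,\c{U})=(\limsup_{n\to\infty}a_n/n)\log|X|$. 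The problem thus reduces to choosing $A$ with $\limsup_n a_n/n=r$.

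To build $A$, set $p_0:=q_0:=0$ by convention and, for each $k\geq0$, adjoin to $A$ the $p_{k+1}-p_k$ right-most integers of the block $[q_k,q_{k+1})$, namely the positions $q_{k+1}-(p_{k+1}-p_k),\ldots,q_{k+1}-1$. Condition (ii) guarantees each block is long enough to accommodate these indices, and a telescoping sum yields $a_{q_k}=p_k$ for every $k$. This immediately produces the lower bound $\limsup_n a_n/n\geq\lim_k p_k/q_k=r$ from condition (iii).

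The substantive step is the matching upper bound $\limsup_n a_n/n\leq r$. For any $n$ in an interval $[q_k,q_{k+1}]$ I split into two subcases: if $n\leq q_{k+1}-(p_{k+1}-p_k)$ then $a_n=p_k$ and so $a_n/n\leq p_k/q_k\leq p_{k+1}/q_{k+1}$ by (ii); otherwise $a_n=n-q_{k+1}+p_{k+1}$, and a brief rearrangement using $p_{k+1}\leq q_{k+1}$ from (i) reduces the desired inequality to $n\leq q_{k+1}$, which holds. Since $(p_k/q_k)$ is monotone with supremum $r$, we obtain $\sup_{n\geq q_1}a_n/n\leq r$.

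The expected obstacle lies precisely in this upper bound: packing the free indices at the \emph{left} end of each block would create early peaks of $a_n/n$ above $r$, wrecking the equality. The right-end placement is what keeps the running density within the envelope set by the ratios $p_{k+1}/q_{k+1}$. Combining the two bounds then gives $\r{H}^\r{t}(S_r)=r\log|X|$.
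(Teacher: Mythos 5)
Your construction is exactly the paper's: the set $S_r$ you define via the free-index set $A$ (with the $p_{k+1}-p_k$ free coordinates packed at the right end of each block $[q_k,q_{k+1})$) is precisely the product set the paper writes down, and the reduction to the finest cover $\c{U}=\{\{x\}\}_{x\in X}$ via Lemma \ref{2003312345} is the same. Your proof is correct and in fact slightly more complete, since you verify explicitly that $a_n/n\leq r$ for the intermediate times $n\neq q_k$ — a point the paper's proof passes over when it jumps from $\r{N}_{S_r/\ov{X}}(\ov{\c{U}}^{q_k})=|X|^{p_k}$ to the value of the $\limsup$ over all $n$.
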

\begin{proof}
Let $S_r\subseteq\ov{X}$ be defined by
$$S_r:=\underbrace{\{z\}\times\cdots\{z\}\times\overbrace{X\times\cdots\times X}^{p_1}}_{q_1}\times
\underbrace{\{z\}\times\cdots\times\{z\}\times\overbrace{X\times\cdots\times X}^{p_2-p_1}}_{q_2-q_1}\times\cdots$$
Let $\c{U}$ be as in the proof of Theorem \ref{2004012335}. Then $\r{N}_{S_r/\ov{X}}(\ov{\c{U}}^{q_k})=|X|^{p_k}$. This shows that
$$\r{H}^\r{t}(S_r)=\r{H}^\r{t}(S_r,\c{U})=\sup_k\frac{p_k}{q_k}\log|X|=r\log|X|.$$
\end{proof}
\begin{lemma}\label{2004121440}
\emph{Suppose that $r$ is a positive real number with $r\leq1$. Then there exist sequences $(p_k)_k,(q_k)_k$ satisfying the conditions
(i)-(iii) of Lemma \ref{2005240840}.}
\end{lemma}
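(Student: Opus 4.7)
The plan is to exhibit explicit sequences in two cases, splitting according to whether $r$ is rational or irrational. For the rational case $r = a/b$ with $a,b$ positive integers, $a \leq b$, and $\gcd(a,b)=1$, I would simply set $p_k := ka$ and $q_k := kb$. Then $p_k \leq q_k$ and both sequences are strictly increasing, yielding (i); the common ratio $p_k/q_k = a/b = r$ is constant, so the ratio part of (ii) is immediate and (iii) reduces to $\sup_k p_k/q_k = r$; and $q_{k+1}-q_k = b \geq a = p_{k+1}-p_k$ supplies the remaining part of (ii). This subsumes $r=1$, where $a=b=1$ and hence $p_k = q_k = k$.

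For irrational $r$ with $0 < r < 1$ I would use the factorial approximations $q_k := k!$ and $p_k := \lfloor rk!\rfloor$, starting from a sufficiently large index $k_0$ and then reindexing from $k=1$. Ratio monotonicity follows because $(k+1)\lfloor rk!\rfloor$ is an integer bounded by $r(k+1)!$ and therefore by $\lfloor r(k+1)!\rfloor$, giving $p_k/q_k \leq p_{k+1}/q_{k+1}$. The squeeze $r - 1/k! < p_k/q_k \leq r$ delivers $\lim_k p_k/q_k = r$, hence (iii). For the difference condition, $\lfloor r(k+1)!\rfloor \geq r(k+1)!-1$ and $\lfloor rk!\rfloor \leq rk!$ together yield
$$(q_{k+1}-p_{k+1}) - (q_k - p_k) \geq (1-r)\,k\cdot k! - 1,$$
which is positive for all sufficiently large $k$ because $r < 1$. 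Strict increase of $p_k$ follows from ratio-monotonicity combined with $p_k \geq 1$, which holds as soon as $rk! \geq 1$.

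The main obstacle will be the difference condition (ii) in the irrational case: the inequality $q_{k+1}-p_{k+1} \geq q_k - p_k$ can fail for small $k$ but is forced for large $k$ by the estimate above, so the cleanest route is to fix $k_0$ large enough that all the required inequalities hold simultaneously from $k_0$ onwards and then reindex. The rational case is essentially a linear construction and poses no real difficulty beyond writing it down.
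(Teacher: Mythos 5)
Your proof is correct, but it takes a genuinely different route from the paper's. The paper fixes a nondecreasing sequence of rationals $r_k$ with $\sup_k r_k=r$ and builds $(p_k),(q_k)$ \emph{inductively}, using an auxiliary elementary fact: given naturals $a,b$ and a rational $s$ with $\frac{a}{b}\leq s\leq 1$, one can find $a'>a$, $b'>b$ with $\frac{a'}{b'}=s$ and $b'-b\geq a'-a$ (essentially by taking a large common multiple of the reduced numerator and denominator of $s$). Your argument instead gives closed-form sequences, at the price of a case split: for rational $r=\frac{a}{b}$ the linear choice $p_k=ka$, $q_k=kb$ verifies (i)--(iii) immediately, and for irrational $r$ the factorial choice $q_k=k!$, $p_k=\lfloor rk!\rfloor$ works after discarding finitely many initial terms. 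Your floor-function estimates are all sound: $(k+1)\lfloor rk!\rfloor\leq r(k+1)!$ gives ratio monotonicity since the left side is an integer; $p_{k+1}\geq\frac{q_{k+1}}{q_k}p_k=(k+1)p_k>p_k$ once $p_k\geq1$ gives strict increase; and the bound $(q_{k+1}-p_{k+1})-(q_k-p_k)\geq(1-r)k\cdot k!-1$ settles the difference condition for large $k$, using $r<1$ (which is exactly where the irrational case lives). The trade-off: the paper's induction is uniform in $r$ and avoids any case analysis, while your construction is fully explicit and makes the rate of approximation $r-\frac{p_k}{q_k}<\frac{1}{k!}$ visible; both are elementary and complete.
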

\begin{proof}
For every two natural numbers $a,b$ and any rational number $s$ satisfying $\frac{a}{b}\leq s\leq1$
it is easily verified that there exist natural numbers $a',b'$ such that
\begin{equation}\label{2004121436}
a<a',\hspace{2mm} b<b',\hspace{2mm} (b'-b)\geq(a'-a),\hspace{2mm}\frac{a'}{b'}=s.
\end{equation}
Let $(r_k)_k$ be a sequence of positive rational numbers such that $r_{k}\leq r_{k+1}$ and $\sup_kr_k=r$.
We define inductively the desired sequences $(p_k)_k,(q_k)_k$ as follows.
Let $p_1,q_1$ be natural numbers with $\frac{p_1}{q_1}=r_1$. Suppose that we have chosen natural numbers $p_1,\ldots,p_\ell,q_1,\ldots,q_\ell$
such that the inequalities in (i) and (ii) of Lemma \ref{2005240840} are satisfied for $k=1,\ldots\ell-1$.
Then we let $p_{\ell+1}:=a',q_{\ell+1}:=b'$ where $a',b'$ satisfy (\ref{2004121436}) for $a=p_\ell,b=q_\ell$, and $s=r_{\ell+1}$.
The proof is complete.
\end{proof}
\begin{theorem}
\emph{For any compact space $X$, every number $u$ with $0\leq u\leq\log|X|$ appears as the topological entropy of a subset of $\ov{X}$.}
\end{theorem}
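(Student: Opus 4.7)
The plan is to split according to whether $X$ is finite or infinite, and to peel off the extremal values $u=0$ and $u=\infty$ first. The case $u=0$ is immediate: any singleton $S=\{(z,z,\ldots)\}\subseteq\ov{X}$ has $\r{H}^\r{t}(S)=0$ by the theorem on finite subsets. For $X$ infinite and $u=\infty$, take $S=\ov{X}$ itself and invoke Theorem \ref{2005240830}.

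For $X$ finite and $0<u\leq\log|X|$, I would set $r:=u/\log|X|\in(0,1]$. Since $X$ is compact Hausdorff and finite, it is discrete, so the machinery of Lemmas \ref{2005240840} and \ref{2004121440} applies. Lemma \ref{2004121440} supplies sequences $(p_k),(q_k)$ of natural numbers meeting conditions (i)--(iii) of Lemma \ref{2005240840} with $\sup_k p_k/q_k=r$, and feeding these into Lemma \ref{2005240840} produces the desired $S_r\subseteq\ov{X}$ with $\r{H}^\r{t}(S_r)=r\log|X|=u$.

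For $X$ infinite and $0<u<\infty$, the plan is to reduce to the finite case by passing to a finite subspace. I would choose any finite subset $Y\subseteq X$ with $\log|Y|\geq u$. Since $X$ is Hausdorff, $Y$ is closed in $X$ and is itself a compact Hausdorff (discrete) subspace, so the finite case applied inside $Y$ yields $S\subseteq\ov{Y}$ with $\r{H}^\r{t}_Y(S)=u$. Theorem \ref{2004022145} then transfers this equality to $\r{H}^\r{t}_X(S)=u$, completing the argument.

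I do not expect any serious obstacle: the genuine content is carried by Lemmas \ref{2005240840} and \ref{2004121440}, which realize arbitrary $r\in(0,1]$ in the finite-alphabet setting. The remaining work is only the case split, the handling of the two boundary values, and the one-line embedding of a finite subspace into $X$ via Theorem \ref{2004022145}.
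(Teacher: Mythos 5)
Your proposal is correct and follows essentially the same route as the paper: reduce the finite case to Lemmas \ref{2005240840} and \ref{2004121440} via $r=u/\log|X|$, and handle infinite $X$ by passing to a finite subspace and invoking Theorems \ref{2005240830} and \ref{2004022145}. Your explicit separate treatment of $u=0$ is in fact slightly more careful than the paper's, since Lemma \ref{2004121440} requires $r$ to be strictly positive.
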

\begin{proof}
First suppose that $X$ is finite. Thus $\log|X|<\infty$. Let $r:=u/\log|X|$. By Lemmas \ref{2005240840} and \ref{2004121440}
there exists a set $S_r\subseteq\ov{X}$ such that $\r{H}^\r{t}(S_r)=r\log|X|=u$. For infinite $X$
by Theorem \ref{2005240830} it is enough to suppose that $u<\infty$. Then the desired result is concluded from the result
for a finite subspace $Y$ of $X$ with $u\leq\log|Y|$ and Theorem \ref{2004022145}.
\end{proof}
\begin{theorem}\label{2004022204}
\emph{Let $S\subseteq\ov{X},T\subseteq\ov{Y}$. Let $X\dot{\cup}Y$ denote the disjoint union space of $X$ and $Y$.
Consider $S\dot{\cup}T$ as a subset of $\ov{X\dot{\cup}Y}$ in the obvious way. Then
$$\r{H}^\r{t}_{X\dot{\cup}Y}(S\dot{\cup}T)=\max\{\r{H}^\r{t}_X(S),\r{H}^\r{t}_Y(T)\}.$$}
\end{theorem}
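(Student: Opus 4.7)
The plan is to get both inequalities separately, using the subspace result (Theorem \ref{2004022145}) and monotonicity (Theorem \ref{2003312302}) for the lower bound, and a direct cover-refinement argument for the upper bound.

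For the easy direction ``$\geq$'', I would observe that $X$ and $Y$ are both (open and closed) subspaces of $X\dot{\cup}Y$, so by Theorem \ref{2004022145}, $\r{H}^\r{t}_{X\dot{\cup}Y}(S)=\r{H}^\r{t}_X(S)$ and $\r{H}^\r{t}_{X\dot{\cup}Y}(T)=\r{H}^\r{t}_Y(T)$. Since $S,T\subseteq S\dot{\cup}T$ (inside $\ov{X\dot{\cup}Y}$), Theorem \ref{2003312302} gives $\r{H}^\r{t}_{X\dot{\cup}Y}(S\dot{\cup}T)\geq\max\{\r{H}^\r{t}_X(S),\r{H}^\r{t}_Y(T)\}$.

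For the harder direction ``$\leq$'', let $\c{W}$ be any open cover of $Z:=X\dot{\cup}Y$. Since $X,Y$ are clopen in $Z$, I would set $\c{U}:=\{W\cap X:W\in\c{W}\}$ and $\c{V}:=\{W\cap Y:W\in\c{W}\}$, which are open covers of $X$ and $Y$. The key observation is that, because every element of $S$ has all coordinates in $X$, a box $W_{i_0}\times\cdots\times W_{i_{n-1}}\times Z\times\cdots$ of $\ov{\c{W}}^n$ intersects $S$ in exactly the same set as the box $(W_{i_0}\cap X)\times\cdots\times(W_{i_{n-1}}\cap X)\times X\times\cdots$ of $\ov{\c{U}}^n$. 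Lifting back similarly via $U\subseteq W$ shows the two covering problems are equivalent, so
$$\r{N}_{S/\ov{Z}}(\ov{\c{W}}^n)=\r{N}_{S/\ov{X}}(\ov{\c{U}}^n),\qquad\r{N}_{T/\ov{Z}}(\ov{\c{W}}^n)=\r{N}_{T/\ov{Y}}(\ov{\c{V}}^n).$$
Since $S\dot{\cup}T=S\cup T$ in $\ov{Z}$, a union of optimal covers gives $\r{N}_{(S\dot{\cup}T)/\ov{Z}}(\ov{\c{W}}^n)\leq\r{N}_{S/\ov{X}}(\ov{\c{U}}^n)+\r{N}_{T/\ov{Y}}(\ov{\c{V}}^n)\leq 2\max\{\r{N}_{S/\ov{X}}(\ov{\c{U}}^n),\r{N}_{T/\ov{Y}}(\ov{\c{V}}^n)\}$.

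Taking $\tfrac{1}{n}\log$ of both sides, the $\tfrac{\log 2}{n}$ vanishes. The remaining step is the elementary identity $\limsup_n\max\{a_n,b_n\}=\max\{\limsup_na_n,\limsup_nb_n\}$, which yields $\r{H}^\r{t}(S\dot{\cup}T,\c{W})\leq\max\{\r{H}^\r{t}(S,\c{U}),\r{H}^\r{t}(T,\c{V})\}\leq\max\{\r{H}^\r{t}_X(S),\r{H}^\r{t}_Y(T)\}$. Taking the supremum over all $\c{W}$ finishes the proof. The main (minor) obstacle is verifying the bijective correspondence between covers of $S$ by $\ov{\c{W}}^n$-boxes and covers by $\ov{\c{U}}^n$-boxes; this is where the disjointness of $X$ and $Y$ in $Z$ is really used, ensuring a coordinate-wise cover in $Z$ restricts cleanly to one in $X$ with the same cardinality.
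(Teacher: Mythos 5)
Your proof is correct and follows essentially the same route as the paper: the lower bound via Theorems \ref{2004022145} and \ref{2003312302}, and the upper bound via the additivity of covering numbers over the disjoint union together with $\limsup_n\max\{a_n,b_n\}=\max\{\limsup_na_n,\limsup_nb_n\}$. Your handling of an arbitrary cover $\c{W}$ of $X\dot{\cup}Y$ by restricting to $X$ and $Y$ is in fact slightly more careful than the paper's assertion that every open cover of $X\dot{\cup}Y$ is of the form $\c{U}\cup\c{V}$, but the argument is the same.
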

\begin{proof}
Let $\c{U},\c{V}$ be open covers respectively for $X,Y$. Then $\c{U}\cup\c{V}$ is an open cover for $X\dot{\cup}Y$.
Note also that every open cover of $X\dot{\cup}Y$ is of this form. We have
$$\r{N}_{S\dot{\cup}T/\ov{X\dot{\cup}Y}}(\ov{\c{U}\cup\c{V}}^n)=\r{N}_{S/\ov{X}}(\ov{\c{U}}^n)+\r{N}_{T/\ov{Y}}(\ov{\c{V}}^n).$$
Thus we have
$$\log[\r{N}_{S\dot{\cup}T/\ov{X\dot{\cup}Y}}(\ov{\c{U}\cup\c{V}}^n)]
\leq\max\Big\{\log[\r{N}_{S/\ov{X}}(\ov{\c{U}}^n)]+\log2,\log[\r{N}_{T/\ov{Y}}(\ov{\c{V}}^n)]+\log2\Big\}.$$
This implies that $\r{H}^\r{t}_{X\dot{\cup}Y}(S\dot{\cup}T)\leq\max\{\r{H}^\r{t}_X(S),\r{H}^\r{t}_Y(T)\}$. The reverse direction follows from
Theorems \ref{2003312302} and \ref{2004022145}.
\end{proof}
We identify $\ov{X}\times\ov{Y}$ with $\ov{X\times Y}$ in the obvious way via the mapping
$$\big((x_n)_n,(y_n)_n\big)\mapsto\big(x_n,y_n\big)_n.$$
\begin{theorem}\label{2003312200}
\emph{Let $S\subseteq\ov{X},T\subseteq\ov{Y}$. Then
$$\r{H}^\r{t}_{X\times Y}(S\times T)\leq\r{H}^\r{t}_X(S)+\r{H}^\r{t}_Y(T).$$}
\end{theorem}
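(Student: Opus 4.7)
The plan is to reduce the supremum over arbitrary open covers of $X\times Y$ to a supremum over product covers, and then exploit a multiplicative bound on covering numbers along each $n$-th join. The preliminary step is to show that product covers are cofinal in the refinement preorder on open covers of $X\times Y$: given any open cover $\c{W}$ of $X\times Y$, I would construct open covers $\c{U}$ of $X$ and $\c{V}$ of $Y$ with $\c{W}\preccurlyeq\c{U}\times\c{V}$, i.e.\ every rectangle $U\times V$ with $U\in\c{U}, V\in\c{V}$ is contained in some $W\in\c{W}$. This is done by a standard two-step compactness argument: for each $y\in Y$, cover the slice $X\times\{y\}$ by finitely many basic rectangles $U^y_i\times V^y_i$ each sitting in some $W\in\c{W}$, let $V(y):=\bigcap_iV^y_i$ (open, as a finite intersection), extract a finite subcover $V(y_1),\ldots,V(y_m)$ of $Y$, and take $\c{V}:=\{V(y_j)\}_j$, with $\c{U}$ consisting of all finite intersections $\bigcap_jU^{y_j}_{\alpha(j)}$ over choice functions $\alpha$; then every $U\times V(y_j)\in\c{U}\times\c{V}$ lies inside $U^{y_j}_{\alpha(j)}\times V^{y_j}_{\alpha(j)}\subseteq W$. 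By Lemma \ref{2003312345}, this reduces the proof to product covers.

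Under the identification $\ov{X}\times\ov{Y}=\ov{X\times Y}$, a typical member of $\ov{\c{U}\times\c{V}}^n$ has the form $\prod_{k=0}^{n-1}(U_{i_k}\times V_{j_k})\times(X\times Y)^\infty$, which coincides with the product of the $\ov{\c{U}}^n$-member indexed by $(i_0,\ldots,i_{n-1})$ and the $\ov{\c{V}}^n$-member indexed by $(j_0,\ldots,j_{n-1})$. Pairing a minimal $\ov{\c{U}}^n$-subcover of $S$ with a minimal $\ov{\c{V}}^n$-subcover of $T$ therefore yields an $\ov{\c{U}\times\c{V}}^n$-subcover of $S\times T$, giving
$$\r{N}_{S\times T/\ov{X\times Y}}(\ov{\c{U}\times\c{V}}^n)\leq\r{N}_{S/\ov{X}}(\ov{\c{U}}^n)\cdot\r{N}_{T/\ov{Y}}(\ov{\c{V}}^n).$$
Taking $\tfrac{1}{n}\log$ splits the right-hand side additively, and then $\limsup(a_n+b_n)\leq\limsup a_n+\limsup b_n$ gives $\r{H}^\r{t}(S\times T,\c{U}\times\c{V})\leq\r{H}^\r{t}(S,\c{U})+\r{H}^\r{t}(T,\c{V})\leq\r{H}^\r{t}_X(S)+\r{H}^\r{t}_Y(T)$. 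Combining with the cofinality step and taking supremum over $\c{W}$ completes the proof.

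The main obstacle is the cofinality step. In the compact metric case a Lebesgue-number argument disposes of it in a single line, but for general compact Hausdorff factors the two-step construction above is really needed to ensure that the $\c{U}$ one ends up with works uniformly against every $V\in\c{V}$, not merely against the particular rectangle from which it arose. The counting and $\limsup$ steps are then essentially mechanical, and one should not expect equality in general, since product covers are only a cofinal subclass and the $\limsup$ is subadditive rather than additive.
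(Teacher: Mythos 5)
Your proof is correct and follows essentially the same route as the paper's: the multiplicative bound $\r{N}_{S\times T/\ov{X\times Y}}(\ov{\c{U}\times\c{V}}^n)\leq\r{N}_{S/\ov{X}}(\ov{\c{U}}^n)\,\r{N}_{T/\ov{Y}}(\ov{\c{V}}^n)$, subadditivity of $\limsup$, and reduction to product covers via Lemma \ref{2003312345}. The only difference is that you spell out the cofinality of product covers (which the paper asserts without proof) via the standard tube-lemma argument, and you correctly state the covering-number relation as an inequality where the paper claims equality; both versions suffice.
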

\begin{proof}
Let $\c{U},\c{V}$ be open covers respectively for $X,Y$ and let $\c{U}\times\c{V}$ denote the associated product cover.
We have $\r{N}_{S\times T/\ov{X\times Y}}(\ov{\c{U}\times\c{V}}^n)=\r{N}_{S/\ov{X}}(\ov{\c{U}}^n)\r{N}_{T/\ov{Y}}(\ov{\c{V}}^n)$.
This implies that
\begin{equation}\label{2003312349}
\r{H}^\r{t}(S\times T,\c{U}\times\c{V})\leq\r{H}^\r{t}(S,\c{U})+\r{H}^\r{t}(T,\c{V}).
\end{equation}
On the other hand if $\c{W}$ is an open cover for $X\times Y$ then there exist open covers $\c{U},\c{V}$ for $X,Y$ such that
$\c{W}\preccurlyeq\c{U}\times\c{V}$ and hence by Lemma \ref{2003312345}, $\r{H}^\r{t}(S\times T,\c{W})\leq\r{H}^\r{t}(S\times T,\c{U}\times\c{V})$.
Thus the inequality (\ref{2003312349}) implies the desired result.
\end{proof}
The generalized topological entropy is \emph{shift invariant}:
\begin{theorem}
\emph{For $S\subseteq\ov{X}$ and any $k\geq1$ let $\f{S}^kS\subseteq\ov{X}$ be obtained by $k$-times action of the shift map on $S$:
$$\f{S}^kS:=\Big\{(x_n)_{n\geq0}\big|(y_0,\ldots,y_{k-1},x_0,x_1,\ldots)\in S\hspace{2mm}
\text{for some}\hspace{2mm}y_0,\ldots,y_{k-1}\in X\Big\}.$$
Then
$$\r{H}^\r{t}_X(\f{S}^kS)=\r{H}^\r{t}_X(S).$$}
\end{theorem}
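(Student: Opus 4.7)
My plan is to fix an open cover $\c{U}$ of $X$ and establish the sharper statement that $\r{H}^\r{t}(\f{S}^k S, \c{U}) = \r{H}^\r{t}(S, \c{U})$ for every $\c{U}$; the theorem then follows by taking supremum. The whole argument reduces to comparing the two covering numbers $\r{N}_{S/\ov{X}}(\ov{\c{U}}^{n+k})$ and $\r{N}_{\f{S}^k S/\ov{X}}(\ov{\c{U}}^n)$, since these are the natural quantities once we align the ``active'' prefix of length $n+k$ in $S$ with the corresponding prefix of length $n$ in $\f{S}^k S$.

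For the inequality $\r{H}^\r{t}(\f{S}^k S, \c{U}) \leq \r{H}^\r{t}(S, \c{U})$, I would take any subcover of $\ov{\c{U}}^{n+k}$ witnessing $\r{N}_{S/\ov{X}}(\ov{\c{U}}^{n+k})$ and simply drop the first $k$ factors of each of its members; the result is a family in $\ov{\c{U}}^n$ of the same size, and it covers $\f{S}^k S$ because every element of $\f{S}^k S$ is obtained from a point of $S$ by deleting a prefix of length $k$. This gives $\r{N}_{\f{S}^k S/\ov{X}}(\ov{\c{U}}^n) \leq \r{N}_{S/\ov{X}}(\ov{\c{U}}^{n+k})$. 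For the reverse direction, I use that $X$ is compact to fix a finite subcover of $\c{U}$ of size $M := \r{N}(\c{U})$. Given any subcover of $\ov{\c{U}}^n$ of size $N$ that covers $\f{S}^k S$, I prepend each of its members, in all $M^k$ possible ways, with $k$ factors chosen from the fixed subcover of $X$; this produces $M^k N$ members of $\ov{\c{U}}^{n+k}$ that together cover $S$. Hence $\r{N}_{S/\ov{X}}(\ov{\c{U}}^{n+k}) \leq M^k\,\r{N}_{\f{S}^k S/\ov{X}}(\ov{\c{U}}^n)$.

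Finally, I take logarithms and pass to the limsup. In both inequalities I rewrite $\frac{1}{n}\log\r{N}_{\cdot}(\ov{\c{U}}^m)$ with $m = n+k$ as $\frac{m}{m-k}\cdot\frac{1}{m}\log\r{N}_{\cdot}(\ov{\c{U}}^m)$, so the ratio $m/(m-k) \to 1$ absorbs the index shift, and the additive constant $k\log M$ is annihilated by dividing by $n+k$. Consequently both inequalities upgrade to the same equality $\r{H}^\r{t}(\f{S}^k S, \c{U}) = \r{H}^\r{t}(S, \c{U})$, and supremizing over $\c{U}$ concludes. The only mildly delicate point is this bookkeeping with the shifted index and the vanishing additive constant; there is no genuine obstacle beyond it, since compactness of $X$ (hence finiteness of $M$) is what makes the error term harmless.
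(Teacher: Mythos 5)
Your proof is correct and follows essentially the same route as the paper: both establish the two-sided estimate $\r{N}_{\f{S}^kS/\ov{X}}(\ov{\c{U}}^n)\leq\r{N}_{S/\ov{X}}(\ov{\c{U}}^{n+k})\leq M^k\,\r{N}_{\f{S}^kS/\ov{X}}(\ov{\c{U}}^n)$ (the paper passes to a finite cover outright where you use a finite subcover of size $\r{N}(\c{U})$, an immaterial difference) and then absorb the index shift and the additive constant in the limsup. Your write-up just makes explicit the covering constructions and the final bookkeeping that the paper leaves to the reader.
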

\begin{proof}
Let $\c{U}$ be an arbitrary open cover for $X$. Without loss of generality we may suppose that $|\c{U}|<\infty$. Then
$$\r{N}_{\f{S}^kS/\ov{X}}(\ov{\c{U}}^n)\leq\r{N}_{S/\ov{X}}(\ov{\c{U}}^{k+n})\leq|\c{U}|^k\r{N}_{\f{S}^kS/\ov{X}}(\ov{\c{U}}^n).$$
This implies that $\r{H}^\r{t}(\f{S}^kS,\c{U})=\r{H}^\r{t}(S,\c{U})$. Hence
$\r{H}^\r{t}_X(\f{S}^kS)=\r{H}^\r{t}_X(S)$.
\end{proof}
\begin{theorem}
\emph{Let $S\subseteq\ov{X}$. For $k\geq2$ we identify $\ov{X^k}$ with $\ov{X}$ in a obvious way via the mapping
$$\big((x_n^1,\ldots,x_n^k)\big)_{n\geq0}\mapsto\big(x_0^1,\ldots,x_0^k,x_1^1,\ldots,x^k_1,x^1_2,\ldots\big).$$
Then
$$\r{H}^\r{t}_{X^k}(S)=k\r{H}^\r{t}_X(S).$$}
\end{theorem}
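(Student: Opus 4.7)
The strategy is to reduce the computation of $\r{H}^\r{t}_{X^k}(S)$ to entropy with respect to ``cubic'' product covers $\c{U}\times\cdots\times\c{U}$ ($k$ factors) of $X^k$ built from a single open cover $\c{U}$ of $X$, and then observe that under the given identification an $n$-block cover on $\ov{X^k}$ collapses to the $(kn)$-coordinate cover on $\ov{X}$.

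First I would reduce to cubic product covers. Given any open cover $\c{W}$ of $X^k$, iterating the tube-lemma argument implicit in the proof of Theorem \ref{2003312200} yields open covers $\c{U}_1,\ldots,\c{U}_k$ of $X$ with $\c{W}\preccurlyeq\c{U}_1\times\cdots\times\c{U}_k$. Setting $\c{U}:=\bigvee_{i=1}^k\c{U}_i$ (still an open cover of $X$), each $V\in\c{U}$ is contained in some member of $\c{U}_i$ for every $i$, so every product $V_1\times\cdots\times V_k$ with $V_j\in\c{U}$ lies inside some element of $\c{U}_1\times\cdots\times\c{U}_k$ and hence of $\c{W}$. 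Thus $\c{W}\preccurlyeq\c{U}\times\cdots\times\c{U}$, and Lemma \ref{2003312345} shows the supremum defining $\r{H}^\r{t}_{X^k}(S)$ is realized over cubic covers; extracting finite subcovers lets us further assume $\c{U}$ is finite.

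Next, under the identification in the statement, a typical member of $\ov{\c{U}\times\cdots\times\c{U}}^n$, namely $(U_{i_0^1}\times\cdots\times U_{i_0^k})\times\cdots\times(U_{i_{n-1}^1}\times\cdots\times U_{i_{n-1}^k})\times X^k\times X^k\times\cdots$, flattens to the member $U_{i_0^1}\times\cdots\times U_{i_{n-1}^k}\times X\times X\times\cdots$ of $\ov{\c{U}}^{kn}$, and every member of $\ov{\c{U}}^{kn}$ arises in this way. Hence
$$\r{N}_{S/\ov{X^k}}\big(\ov{\c{U}\times\cdots\times\c{U}}^n\big)=\r{N}_{S/\ov{X}}\big(\ov{\c{U}}^{kn}\big).$$
Writing $a_m:=\log\r{N}_{S/\ov{X}}(\ov{\c{U}}^m)$, it then suffices to show
$$\limsup_n \frac{a_{kn}}{kn}=\limsup_m \frac{a_m}{m},$$
since this yields $\r{H}^\r{t}_{X^k}(S,\c{U}\times\cdots\times\c{U})=k\,\r{H}^\r{t}_X(S,\c{U})$, and the theorem follows by taking the supremum over $\c{U}$. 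The inequality $\leq$ is automatic. For $\geq$, I would use the key estimate
$$a_{kn+r}\leq a_{kn}+r\log|\c{U}|\qquad(0\leq r<k),$$
obtained by refining any $\ov{\c{U}}^{kn}$-cover of $S$ into an $\ov{\c{U}}^{kn+r}$-cover at the cost of the $|\c{U}|^r$ choices of the next $r$ coordinates; dividing by $kn+r$ and letting $m=kn+r\to\infty$ gives the required inequality.

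The main obstacle lies in this last subsequence comparison: since the entropy is defined via $\limsup$ and not $\lim$, one must argue that sampling only along multiples of $k$ does not strictly decrease the $\limsup$. This is precisely where finiteness of $\c{U}$ is essential, as it bounds the overhead $r\log|\c{U}|$ by a constant independent of $n$. Everything else amounts to careful bookkeeping in the definitions.
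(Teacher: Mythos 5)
Your proposal is correct and follows essentially the same route as the paper: reduce to cubic product covers $\c{U}\times\cdots\times\c{U}$ via $\preccurlyeq$ and Lemma \ref{2003312345}, use the flattening identity $\r{N}_{S/\ov{X^k}}(\ov{\c{U}\times\cdots\times\c{U}}^n)=\r{N}_{S/\ov{X}}(\ov{\c{U}}^{kn})$, and then show that passing to the subsequence of multiples of $k$ does not change the $\limsup$. The only (minor) divergence is in that last step: you control $a_{kn+r}$ by the overhead $r\log|\c{U}|$, which forces you to first pass to finite covers, whereas the paper sandwiches $\r{N}_{S/\ov{X}}(\ov{\c{U}}^{n})$ between the counts at $[\frac{n}{k}]$ and $[\frac{n}{k}]+1$ blocks using monotonicity alone, with no finiteness assumption; both arguments are valid.
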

\begin{proof}
Let $\c{U}$ be an open cover for $X$ and consider the associated product cover $\c{U}^k$ of $X^k$.
For $n\geq1$ let $[\frac{n}{k}]$ denote the greatest integer less than or equal to $\frac{n}{k}$. It is easily verified that for every $n\geq k$,
$$\r{N}_{S/\ov{X^k}}(\ov{\c{U}^k}^{[\frac{n}{k}]})\leq\r{N}_{S/\ov{X}}(\ov{\c{U}}^{n})\leq\r{N}_{S/\ov{X^k}}(\ov{\c{U}^k}^{[\frac{n}{k}]+1}).$$
Thus we have
$$\frac{k}{n}\log\r{N}_{S/\ov{X}}(\ov{\c{U}}^{n})\leq\frac{1}{[\frac{n}{k}]}\log\r{N}_{S/\ov{X^k}}(\ov{\c{U}^k}^{[\frac{n}{k}]+1}).$$
This implies that
$$k\r{H}^\r{t}(S,\c{U})=\limsup_{n\to\infty}\frac{k}{n}\log\r{N}_{S/\ov{X}}(\ov{\c{U}}^{n})\leq
\limsup_{\ell\to\infty}\frac{1}{\ell}\log\r{N}_{S/\ov{X^k}}(\ov{\c{U}^k}^{\ell+1})=\r{H}^\r{t}(S,\c{U}^k).$$
Similarly it is proved that $\r{H}^\r{t}(S,\c{U}^k)\leq k\r{H}^\r{t}(S,\c{U})$. Thus we have
\begin{equation}\label{2003312201}
\r{H}^\r{t}(S,\c{U}^k)=k\r{H}^\r{t}(S,\c{U}).
\end{equation}
If $\c{V}$ is an open cover for $X^k$ then there is an open cover $\c{U}$ for $X$ such that $\c{V}\preccurlyeq\c{U}^k$ and
hence by Lemma \ref{2003312345}, $\r{H}^\r{t}(S,\c{V})\leq\r{H}^\r{t}(S,\c{U}^k)$.
Thus (\ref{2003312201}) implies $\r{H}^\r{t}_{X^k}(S)=k\r{H}^\r{t}_X(S)$.
\end{proof}
\begin{theorem}\label{2004121500}
\emph{Let $S\subseteq\ov{X}$. For $k\geq2$ let $\f{R}^kS\subseteq\ov{X}$ denote the \emph{$k$-restriction} of $S$:
$$\f{R}^kS:=\Big\{(x_{kn})_{n\geq0}|(x_n)_{n\geq0}\in S\Big\}.$$
Then we have
$$\r{H}^\r{t}(\f{R}^kS)\leq k\r{H}^\r{t}(S).$$}
\end{theorem}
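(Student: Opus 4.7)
The plan is to show the inequality at the level of covers: for any open cover $\c{U}$ of $X$ and any $n\geq1$, we have
$$\r{N}_{\f{R}^kS/\ov{X}}(\ov{\c{U}}^n)\leq\r{N}_{S/\ov{X}}(\ov{\c{U}}^{kn}),$$
and then pass to limits to get $\r{H}^\r{t}(\f{R}^kS,\c{U})\leq k\r{H}^\r{t}(S,\c{U})$, from which the result follows by taking the supremum over $\c{U}$.

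The key observation driving the cover inequality is simply that a member $U_{j_0}\times U_{j_1}\times\cdots\times U_{j_{kn-1}}\times X\times X\times\cdots$ of $\ov{\c{U}}^{kn}$, when restricted to the indices $0,k,2k,\ldots,(n-1)k$, produces the member $U_{j_0}\times U_{j_k}\times\cdots\times U_{j_{(n-1)k}}\times X\times X\times\cdots$ of $\ov{\c{U}}^n$. Concretely, given a subcover $\{V_\ell\}_{\ell=1}^N$ of $\ov{\c{U}}^{kn}$ for $S$ with $N=\r{N}_{S/\ov{X}}(\ov{\c{U}}^{kn})$, I would define the corresponding family $\{V'_\ell\}_{\ell=1}^N\subseteq\ov{\c{U}}^n$ by this index-restriction operation and verify that it covers $\f{R}^kS$: if $(x_n)_n\in\f{R}^kS$ arises from $(y_n)_n\in S$ via $x_n=y_{kn}$, pick $\ell$ with $(y_n)_n\in V_\ell$; since $k(n-1)\leq kn-1$, membership in $V_\ell$ constrains exactly the coordinates $y_0,y_k,\ldots,y_{k(n-1)}$ to the right $U_{j_i}$'s, and this is precisely the condition $(x_n)_n\in V'_\ell$.

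Once the cover inequality is established, the analytic step is routine:
$$\frac{1}{n}\log\r{N}_{\f{R}^kS/\ov{X}}(\ov{\c{U}}^n)\leq\frac{k}{kn}\log\r{N}_{S/\ov{X}}(\ov{\c{U}}^{kn}).$$
Taking $\limsup_{n\to\infty}$, the right-hand side tends to $k\r{H}^\r{t}(S,\c{U})$, yielding $\r{H}^\r{t}(\f{R}^kS,\c{U})\leq k\r{H}^\r{t}(S,\c{U})$. Taking the supremum over open covers $\c{U}$ of $X$ gives the claimed $\r{H}^\r{t}(\f{R}^kS)\leq k\r{H}^\r{t}(S)$.

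I do not anticipate any real obstacle; the only point that requires care is the index-counting to confirm that the coordinates $0,k,\ldots,(n-1)k$ all lie within the first $kn$ coordinates controlled by $\ov{\c{U}}^{kn}$, and that the assignment $V_\ell\mapsto V'_\ell$ is well defined as a map into $\ov{\c{U}}^n$ (so that the cardinality is preserved up to inequality). Unlike the earlier dilation theorem, one does not obtain a reverse inequality here, which is consistent with the asymmetry of the statement.
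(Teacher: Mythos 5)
Your proof is correct and follows exactly the paper's route: the single cover inequality $\r{N}_{\f{R}^kS/\ov{X}}(\ov{\c{U}}^n)\leq\r{N}_{S/\ov{X}}(\ov{\c{U}}^{kn})$ (which the paper asserts without the index-restriction details you supply) followed by the limsup computation and the supremum over covers. The only stylistic quibble is that the right-hand side does not ``tend to'' $k\r{H}^\r{t}(S,\c{U})$ but rather has $\limsup$ at most that value, being a limsup along the subsequence $kn$; the inequality you need is nonetheless immediate.
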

\begin{proof}
Let $\c{U}$ be an open cover for $X$. We have
$\r{N}_{\f{R}^kS/\ov{X}}(\ov{\c{U}}^n)\leq\r{N}_{S/\ov{X}}(\ov{\c{U}}^{kn})$. This implies that
$\r{H}^\r{t}(\f{R}^kS,\c{U})\leq k\r{H}^\r{t}(S,\c{U})$. Thus $\r{H}^\r{t}(\f{R}^kS)\leq k\r{H}^\r{t}(S)$.
\end{proof}
\begin{theorem}
\emph{Let $S\subseteq\ov{X}$. For $k\geq2$ let $\f{D}^kS\subseteq\ov{X}$ denote the \emph{$k$-dilation} of $S$:
$$\f{D}^kS:=\Big\{(\overbrace{x_0,\ldots,x_0}^k,\overbrace{x_1,\ldots,x_1}^k,\ldots): (x_n)_{n\geq0}\in S\Big\}.$$
Then
$$\r{H}^\r{t}(\f{D}^kS)=\frac{1}{k}\r{H}^\r{t}(S).$$}
\end{theorem}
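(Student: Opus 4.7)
The plan is to establish the exact identity
$$\r{N}_{\f{D}^{k}S/\ov{X}}(\ov{\c{U}}^{km})=\r{N}_{S/\ov{X}}(\ov{\c{U}}^{m})$$
for every open cover $\c{U}$ of $X$ and every $m\geq 1$, and then extract the entropy equality by a routine subsequence/sandwich argument. The two inequalities come from a concrete pair of operations on cover elements.

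\textbf{Block-constant lifting gives $\leq$.} Starting from an optimal $\ov{\c{U}}^{m}$-cover of $S$ given by tuples $(U_{j_{0}^{(a)}},\ldots,U_{j_{m-1}^{(a)}})$ for $a=1,\ldots,N:=\r{N}_{S/\ov{X}}(\ov{\c{U}}^{m})$, I form the $\ov{\c{U}}^{km}$-elements in which each $U_{j_{\ell}^{(a)}}$ is repeated $k$ consecutive times. A point of $\f{D}^{k}S$ coming from $x=(x_n)\in S$ places $x_\ell$ into $k$ consecutive coordinates, so membership $x_\ell\in U_{j_{\ell}^{(a)}}$ lifts directly to $\f{D}^{k}(x)$ lying in the lifted set; hence these $N$ lifted sets cover $\f{D}^{k}S$.

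\textbf{First-of-block restriction gives $\geq$.} Conversely, given any $\ov{\c{U}}^{km}$-cover $\{U_{i_{0}^{(a)}}\times\cdots\times U_{i_{km-1}^{(a)}}\times X\times\cdots\}_{a=1}^{N}$ of $\f{D}^{k}S$, I retain only the first index of each consecutive block of $k$ to obtain
$$U_{i_{0}^{(a)}}\times U_{i_{k}^{(a)}}\times\cdots\times U_{i_{(m-1)k}^{(a)}}\times X\times\cdots\in\ov{\c{U}}^{m}.$$
If $x\in S$ and $\f{D}^{k}(x)$ lies in the $a$th original set, then in particular $x_\ell\in U_{i_{\ell k}^{(a)}}$ for every $\ell=0,\ldots,m-1$, so $x$ lies in the corresponding restricted set, and these (at most) $N$ restricted sets cover $S$.

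\textbf{Passing to entropy.} For arbitrary $n$ write $n=km+r$ with $0\leq r<k$. Truncating trailing factors to $X$ shows $\ov{\c{U}}^{km}\preccurlyeq\ov{\c{U}}^{n}\preccurlyeq\ov{\c{U}}^{k(m+1)}$, so combining with the identity above yields
$$\r{N}_{S/\ov{X}}(\ov{\c{U}}^{m})\leq\r{N}_{\f{D}^{k}S/\ov{X}}(\ov{\c{U}}^{n})\leq\r{N}_{S/\ov{X}}(\ov{\c{U}}^{m+1}).$$
Dividing by $n$ and sending $n\to\infty$ (so $m/n\to 1/k$), both sides have $\limsup$ equal to $\tfrac{1}{k}\r{H}^{\r{t}}(S,\c{U})$, which forces $\r{H}^{\r{t}}(\f{D}^{k}S,\c{U})=\tfrac{1}{k}\r{H}^{\r{t}}(S,\c{U})$; a supremum over $\c{U}$ then completes the proof. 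I anticipate no real obstacle; the only bookkeeping is the monotonicity $\ov{\c{U}}^{a}\preccurlyeq\ov{\c{U}}^{b}$ for $a\leq b$, which holds because specifying more factors merely refines the cover.
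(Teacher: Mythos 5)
Your proof is correct and follows essentially the same route as the paper: both hinge on the counting identity $\r{N}_{\f{D}^kS/\ov{X}}(\ov{\c{U}}^{km})=\r{N}_{S/\ov{X}}(\ov{\c{U}}^{m})$ (the paper states it for all window lengths $nk-i$, $i=0,\ldots,k-1$, while you prove it for exact multiples of $k$ and sandwich the remaining lengths via $\ov{\c{U}}^{km}\preccurlyeq\ov{\c{U}}^{n}\preccurlyeq\ov{\c{U}}^{k(m+1)}$). Your write-up actually supplies the lifting/restriction argument for that identity, which the paper leaves implicit.
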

\begin{proof}
Let $\c{U}$ be an open cover for $X$. For every $n\geq1$ and $i=0,\ldots,k-1$ we have
$$\r{N}_{\f{D}^kS/\ov{X}}(\ov{\c{U}}^{nk-i})=\r{N}_{S/\ov{X}}(\ov{\c{U}}^{n}).$$
Thus for $m=nk-i$ we have
$$(k-\frac{i}{n})\frac{1}{m}\log\r{N}_{\f{D}^kS/\ov{X}}(\ov{\c{U}}^{m})=\frac{1}{n}\r{N}_{S/\ov{X}}(\ov{\c{U}}^{n}).$$
This implies that $\r{H}^\r{t}(S,\c{U})=k\r{H}^\r{t}(\f{D}^kS,\c{U})$. Hence $\r{H}^\r{t}(S)=k\r{H}^\r{t}(\f{D}^kS)$.
\end{proof}
By the following simple result we may find upper bounds for the entropy of a (not small) class of noncontinuous mappings.
(See Example \ref{2005250707} below.)
\begin{theorem}\label{2004121700}
\emph{Let $T:X\to X$ be a (not necessarily continuous) mapping and $K$ be a nonempty proper subset of $X$ such that $K,X\setminus K$ are
invariant under $T$ i.e. $T(K)\subseteq K,T(X\setminus K)\subseteq X\setminus K$. For a fixed $z\in K$ let the mapping $T_{K,z}:X\to X$
be defined by $T_{K,z}(x)=T(x)$ if $x\in X\setminus K$ and otherwise $T_{K,z}(x)=z$. Then
$$\r{H}^\r{t}(\ov{\r{Gr}}T_{K,z})\leq\r{H}^\r{t}(\ov{\r{Gr}}T).$$}
\end{theorem}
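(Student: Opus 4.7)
The plan is to decompose $\ov{\r{Gr}}T_{K,z}$ according to whether the initial point lies in $K$ or $X\setminus K$ and bound the covering numbers of each piece separately. By $T$-invariance of $X\setminus K$, for any $x\in X\setminus K$ every iterate $T^n x$ stays in $X\setminus K$, so $T_{K,z}^n x=T^n x$ for all $n\geq0$, and the orbit of $x$ under $T_{K,z}$ coincides with its $T$-orbit. Hence the set
$$A:=\big\{(T^n x)_{n\geq0}:x\in X\setminus K\big\}$$
satisfies $A\subseteq\ov{\r{Gr}}T$. For $x\in K$, $T_{K,z}(x)=z\in K$ and $T_{K,z}(z)=z$, so the orbit of $x$ is $(x,z,z,\ldots)$, giving the second piece
$$B:=\big\{(x,z,z,\ldots):x\in K\big\},$$
and $\ov{\r{Gr}}T_{K,z}=A\cup B$.

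For an arbitrary open cover $\c{U}$ of $X$, I would bound the two parts separately. Monotonicity (Theorem \ref{2003312302}) together with $A\subseteq\ov{\r{Gr}}T$ yields $\r{N}_{A/\ov{X}}(\ov{\c{U}}^n)\leq\r{N}_{\ov{\r{Gr}}T/\ov{X}}(\ov{\c{U}}^n)$. For $B$, its projection onto the first $n$ coordinates lies in $K\times\{z\}^{n-1}$: pick a minimal subfamily $U_1,\ldots,U_m\in\c{U}$ covering $K$ with $m=\r{N}_{K/X}(\c{U})$ and any fixed $V\in\c{U}$ containing $z$; then the sets $U_i\times V\times\cdots\times V\times X\times X\times\cdots$ (with $V$ in positions $1$ through $n-1$) belong to $\ov{\c{U}}^n$ and cover $B$. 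Therefore $\r{N}_{B/\ov{X}}(\ov{\c{U}}^n)\leq C:=\r{N}_{K/X}(\c{U})$, a constant independent of $n$, so that
$$\r{N}_{\ov{\r{Gr}}T_{K,z}/\ov{X}}(\ov{\c{U}}^n)\leq\r{N}_{\ov{\r{Gr}}T/\ov{X}}(\ov{\c{U}}^n)+C.$$

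To finish, since $\r{N}_{\ov{\r{Gr}}T/\ov{X}}(\ov{\c{U}}^n)\geq1$, the inequality $\log(a_n+C)\leq\log(1+C)+\log a_n$ implies that dividing by $n$ and taking $\limsup$ absorbs the additive constant, giving $\r{H}^\r{t}(\ov{\r{Gr}}T_{K,z},\c{U})\leq\r{H}^\r{t}(\ov{\r{Gr}}T,\c{U})$; a supremum over $\c{U}$ then yields the claim. The main conceptual obstacle is that $\ov{\r{Gr}}T_{K,z}$ is \emph{not} contained in $\ov{\r{Gr}}T$ in general (orbits of the form $(x,z,z,\ldots)$ with $x\in K$ and $Tx\neq z$ need not be $T$-orbits), so one cannot invoke monotonicity directly; the key observation is that these extra orbits are degenerate enough to admit a covering of size independent of $n$, and therefore contribute nothing to the exponential growth rate defining the entropy.
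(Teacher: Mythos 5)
Your proof is correct. The paper's own proof is just the word ``Straightforward,'' and your argument --- splitting $\ov{\r{Gr}}T_{K,z}$ into the part $A\subseteq\ov{\r{Gr}}T$ coming from $X\setminus K$ (where the two maps agree by invariance) and the degenerate part $B=\{(x,z,z,\ldots):x\in K\}$, which admits a cover of size $\r{N}_{K/X}(\c{U})$ independent of $n$ and so contributes nothing to the growth rate --- is exactly the natural way to fill in that omitted argument, with all steps (the orbit computations, the subadditivity of covering numbers under unions, and the absorption of the additive constant in the $\limsup$) checking out.
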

\begin{proof}
Straightforward.
\end{proof}
\begin{theorem}\label{2004021402}
\emph{Let $\phi:X\to Y$ be a continuous map and $S\subseteq\ov{X}$. Let $\phi S\subseteq\ov{Y}$ denote the subset $\{(\phi x_n)_n:(x_n)_n\in S\}$.
Then for any open cover $\c{V}$ of $Y$, $\r{H}^\r{t}(S,\phi^{-1}\c{V})=\r{H}^\r{t}(\phi S,\c{V})$. Thus we have
$$\r{H}^\r{t}_Y(\phi S)\leq\r{H}^\r{t}_X(S).$$
If $\phi$ is a homeomorphism then
$$\r{H}^\r{t}_Y(\phi S)=\r{H}^\r{t}_X(S).$$}
\end{theorem}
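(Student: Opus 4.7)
The plan is to reduce everything to a single combinatorial identity on covering numbers, and then harvest the three assertions by taking suprema.

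First I would set $\phi^{-1}\c{V}:=\{\phi^{-1}(V):V\in\c{V}\}$, which is an open cover of $X$ because $\phi$ is continuous. The key step is to show that for every $n\geq1$,
\[
\r{N}_{S/\ov{X}}\!\big(\ov{\phi^{-1}\c{V}}^{\,n}\big)=\r{N}_{\phi S/\ov{Y}}\!\big(\ov{\c{V}}^{\,n}\big).
\]
This comes from unpacking definitions: a member of $\ov{\phi^{-1}\c{V}}^{\,n}$ has the form $\phi^{-1}(V_{i_0})\times\cdots\times\phi^{-1}(V_{i_{n-1}})\times X\times X\times\cdots$, and a sequence $(x_n)\in\ov{X}$ lies in it iff $\phi(x_k)\in V_{i_k}$ for $k=0,\ldots,n-1$, i.e.\ iff $(\phi x_n)$ lies in the corresponding member $V_{i_0}\times\cdots\times V_{i_{n-1}}\times Y\times Y\times\cdots$ of $\ov{\c{V}}^{\,n}$. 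Indexing subfamilies of $\ov{\phi^{-1}\c{V}}^{\,n}$ by the same multi-indices as subfamilies of $\ov{\c{V}}^{\,n}$, one therefore sees that a subfamily covers $S$ iff the corresponding subfamily covers $\phi S$, which gives the claimed equality of minimal cardinalities.

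Dividing by $n$, taking $\log$ and $\limsup$ yields the first asserted identity $\r{H}^\r{t}(S,\phi^{-1}\c{V})=\r{H}^\r{t}(\phi S,\c{V})$. For the inequality, I would simply take the supremum over open covers $\c{V}$ of $Y$:
\[
\r{H}^\r{t}_Y(\phi S)=\sup_{\c{V}}\r{H}^\r{t}(\phi S,\c{V})=\sup_{\c{V}}\r{H}^\r{t}(S,\phi^{-1}\c{V})\leq\sup_{\c{U}}\r{H}^\r{t}(S,\c{U})=\r{H}^\r{t}_X(S),
\]
where the inequality uses that $\{\phi^{-1}\c{V}:\c{V}\text{ open cover of }Y\}$ is a subfamily of the open covers of $X$.

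For the homeomorphism case, the cleanest route is to apply the already-established inequality to $\phi^{-1}:Y\to X$ and to the set $\phi S\subseteq\ov{Y}$; since $\phi^{-1}(\phi S)=S$ when $\phi$ is bijective, this gives $\r{H}^\r{t}_X(S)\leq\r{H}^\r{t}_Y(\phi S)$, and combined with the previous inequality produces the equality. (Alternatively, when $\phi$ is a homeomorphism every open cover $\c{U}$ of $X$ equals $\phi^{-1}(\phi\c{U})$ for the open cover $\phi\c{U}$ of $Y$, so the two suprema defining the entropies coincide.) There is no real obstacle here; the only point that deserves care is tracking that the identity on covering numbers is an honest equality (not just an inequality), which hinges on the bijection between subfamilies of $\ov{\phi^{-1}\c{V}}^{\,n}$ and of $\ov{\c{V}}^{\,n}$ indicated above.
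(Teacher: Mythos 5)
Your argument is correct and is precisely the ``straightforward'' argument the paper has in mind: the covering-number identity $\r{N}_{S/\ov{X}}(\ov{\phi^{-1}\c{V}}^{\,n})=\r{N}_{\phi S/\ov{Y}}(\ov{\c{V}}^{\,n})$, followed by taking suprema over covers (noting that the pullback covers form only a subfamily of all covers of $X$), and applying the inequality to $\phi^{-1}$ in the homeomorphism case. No gaps; this is the same route the paper intends.
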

\begin{proof}
Straightforward.
\end{proof}
Note that the inequality in Theorem \ref{2004021402} can not be equality in general: If $X$ is infinite and $Y$ is a singleton space then
$\r{H}^\r{t}(\ov{X})=\infty$ and $\r{H}^\r{t}(\phi\ov{X})=0$.
\begin{theorem}
\emph{Let $S,S'\subseteq\ov{X}$. Then
$$\r{H}^\r{t}(S\cup S')=\max\{\r{H}^\r{t}(S),\r{H}^\r{t}(S')\}.$$}
\end{theorem}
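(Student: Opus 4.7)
My plan is to establish the two inequalities separately. For the direction $\r{H}^\r{t}(S\cup S')\geq\max\{\r{H}^\r{t}(S),\r{H}^\r{t}(S')\}$, I simply invoke the monotonicity result Theorem \ref{2003312302}: since $S,S'\subseteq S\cup S'$, both $\r{H}^\r{t}(S)$ and $\r{H}^\r{t}(S')$ are bounded above by $\r{H}^\r{t}(S\cup S')$.

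For the reverse inequality, I fix an open cover $\c{U}$ of $X$ and work at the level of covering numbers. The key observation is the subadditivity
$$\r{N}_{(S\cup S')/\ov{X}}(\ov{\c{U}}^n)\leq\r{N}_{S/\ov{X}}(\ov{\c{U}}^n)+\r{N}_{S'/\ov{X}}(\ov{\c{U}}^n),$$
which holds because the union of a minimal subcover of $S$ from $\ov{\c{U}}^n$ with a minimal subcover of $S'$ from $\ov{\c{U}}^n$ is a subcover of $S\cup S'$. Using the elementary inequality $\log(a+b)\leq\log 2+\max\{\log a,\log b\}$ for positive $a,b$, dividing by $n$, and taking $\limsup$, the additive $\frac{\log 2}{n}$ term vanishes in the limit.

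What remains is the identity $\limsup_n\max\{a_n,b_n\}=\max\{\limsup_n a_n,\limsup_n b_n\}$ for bounded real sequences, which I verify by noting that for every $\epsilon>0$ one has $a_n,b_n\leq\max\{\limsup a,\limsup b\}+\epsilon$ eventually, so $\max\{a_n,b_n\}$ is eventually bounded by the same quantity; the reverse inequality is immediate from monotonicity of $\limsup$. Applying this identity yields
$$\r{H}^\r{t}(S\cup S',\c{U})\leq\max\{\r{H}^\r{t}(S,\c{U}),\r{H}^\r{t}(S',\c{U})\}\leq\max\{\r{H}^\r{t}(S),\r{H}^\r{t}(S')\}.$$
Finally, taking the supremum over all open covers $\c{U}$ of $X$ on the left-hand side gives $\r{H}^\r{t}(S\cup S')\leq\max\{\r{H}^\r{t}(S),\r{H}^\r{t}(S')\}$, completing the proof. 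I do not anticipate any genuine obstacle here; the only subtlety worth flagging is the $\limsup$-of-max identity, which is the reason the subadditivity of covering numbers translates cleanly to a max in the limit.
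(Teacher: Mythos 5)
Your proof is correct, but it takes a different route from the paper's. The paper deduces the result from two earlier theorems: it first forms the disjoint union $S\dot{\cup}S'\subseteq\ov{X\dot{\cup}X}$ and applies Theorem \ref{2004022204} to get $\r{H}^\r{t}_{X\dot{\cup}X}(S\dot{\cup}S')=\max\{\r{H}^\r{t}(S),\r{H}^\r{t}(S')\}$, then pushes forward along the canonical continuous map $\phi:X\dot{\cup}X\to X$ (which sends $S\dot{\cup}S'$ to $S\cup S'$) and invokes Theorem \ref{2004021402} to conclude $\r{H}^\r{t}(S\cup S')\leq\max\{\r{H}^\r{t}(S),\r{H}^\r{t}(S')\}$; the reverse inequality comes from Theorem \ref{2003312302}, as in your argument. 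You instead argue directly in $\ov{X}$ via the subadditivity $\r{N}_{(S\cup S')/\ov{X}}(\ov{\c{U}}^n)\leq\r{N}_{S/\ov{X}}(\ov{\c{U}}^n)+\r{N}_{S'/\ov{X}}(\ov{\c{U}}^n)$, the bound $\log(a+b)\leq\log2+\max\{\log a,\log b\}$, and the identity $\limsup_n\max\{a_n,b_n\}=\max\{\limsup_na_n,\limsup_nb_n\}$ (of which only the $\leq$ half is actually needed, and which holds without the boundedness hypothesis you impose — though boundedness does hold here since $\c{U}$ may be taken finite by compactness). In substance you have inlined the arithmetic that the paper performs inside its proof of the disjoint-union theorem, where the covering numbers add exactly rather than subadditively; your version is self-contained and avoids the detour through $X\dot{\cup}X$ and the functoriality theorem, while the paper's version is shorter on the page because it reuses machinery already established.
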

\begin{proof}
By Theorem \ref{2004022204}, $\r{H}^\r{t}_{X\dot{\cup}X}(S\dot{\cup}S')=\max\{\r{H}^\r{t}(S),\r{H}^\r{t}(S')\}$. The canonical mapping $\phi:X\dot{\cup}X\to X$ is continuous and $\phi(S\dot{\cup}S')=S\cup S'$. Thus it follows from Theorem \ref{2004021402} that
$\r{H}^\r{t}(S\cup S')\leq\max\{\r{H}^\r{t}(S),\r{H}^\r{t}(S')\}$.
The reverse follows from Theorem \ref{2003312302}.
\end{proof}
For any topological space $Z$ let $\r{CL}(Z)$ denote the set of all nonempty closed subsets of $Z$. Recall that the \emph{upper Vietoris topology}
on $\r{CL}(Z)$ is the smallest topology containing all the subsets of $\r{CL}(Z)$ of the form $\{K\in\r{CL}(Z):K\subseteq U\}$
where $U\subseteq Z$ is open.
\begin{theorem}
\emph{Let $(S_\lambda)_\lambda$ be a net in $\r{CL}(\ov{X})$ converging to a closed subset $S$ of $\ov{X}$ in upper Vietoris topology.
Let $\c{U}$ be an open cover for $X$. Suppose that for every $\lambda$ the $\limsup_n$ in the definition of $\r{H}^\r{t}(S,\c{U})$
(given by (\ref{2003062220})) is equal to $\inf_n$ (e.g. for every $\lambda$ there is a continuous mapping $T_\lambda:X\to X$ such that
$S_\lambda=\ov{\r{Gr}}T$). Then
$$\limsup_\lambda\r{H}^\r{t}(S_\lambda,\c{U})\leq\r{H}^\r{t}(S,\c{U}).$$}
\end{theorem}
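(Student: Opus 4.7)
The plan is to establish upper semicontinuity of the counting function $S\mapsto\r{N}_{S/\ov{X}}(\ov{\c{U}}^n)$ in the upper Vietoris topology at each fixed $n$, and then combine this with the hypothesis that $\r{H}^\r{t}(S_\lambda,\c{U})$ is given by an infimum rather than merely a limit superior.

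First I would fix $n\geq1$ and set $k:=\r{N}_{S/\ov{X}}(\ov{\c{U}}^n)$. By definition there exist members $V_1,\ldots,V_k$ of $\ov{\c{U}}^n$ with $S\subseteq V_1\cup\cdots\cup V_k$. Since each $V_j$ is of the form $U_{i_0}\times\cdots\times U_{i_{n-1}}\times X\times X\times\cdots$, it is open in $\ov{X}$, and hence $W:=V_1\cup\cdots\cup V_k$ is open as well. The basic open neighborhoods of $S$ in the upper Vietoris topology on $\r{CL}(\ov{X})$ are precisely sets of the form $\{K\in\r{CL}(\ov{X}):K\subseteq W'\}$ with $W'\subseteq\ov{X}$ open. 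Since $S\subseteq W$, the convergence $S_\lambda\to S$ forces $S_\lambda\subseteq W$ eventually, i.e.\ $S_\lambda\subseteq V_1\cup\cdots\cup V_k$ for $\lambda$ past some index. Consequently $\r{N}_{S_\lambda/\ov{X}}(\ov{\c{U}}^n)\leq k=\r{N}_{S/\ov{X}}(\ov{\c{U}}^n)$ eventually, which is the desired upper semicontinuity.

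Next I would invoke the hypothesis: for each $\lambda$ the $\limsup_n$ defining $\r{H}^\r{t}(S_\lambda,\c{U})$ equals $\inf_n$. Therefore, for any fixed $n$,
$$\r{H}^\r{t}(S_\lambda,\c{U})\leq\frac{1}{n}\log\r{N}_{S_\lambda/\ov{X}}(\ov{\c{U}}^n).$$
Combined with the upper semicontinuity from the previous step, this yields
$$\limsup_\lambda\r{H}^\r{t}(S_\lambda,\c{U})\leq\frac{1}{n}\log\r{N}_{S/\ov{X}}(\ov{\c{U}}^n)$$
for every fixed $n$. Taking $\limsup_n$ (equivalently $\inf_n$, which only strengthens the bound) on the right-hand side produces $\limsup_\lambda\r{H}^\r{t}(S_\lambda,\c{U})\leq\r{H}^\r{t}(S,\c{U})$, completing the argument.

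There is no serious obstacle; the only delicate point is recognizing why the $\inf_n$ hypothesis on the $S_\lambda$ side is genuinely needed. Without it one could only bound $\r{H}^\r{t}(S_\lambda,\c{U})$ via a $\limsup_n$, and swapping $\limsup_\lambda$ with $\limsup_n$ in general fails; the $\inf_n$ representation is what lets us freeze a single $n$ before passing to $\limsup_\lambda$, which is the pivotal maneuver of the proof.
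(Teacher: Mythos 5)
Your proposal is correct and follows essentially the same route as the paper: extract a minimal finite subcover of $S$ from $\ov{\c{U}}^n$, use its union as an open set witnessing upper Vietoris convergence to get $\r{N}_{S_\lambda/\ov{X}}(\ov{\c{U}}^n)\leq\r{N}_{S/\ov{X}}(\ov{\c{U}}^n)$ eventually, invoke the $\inf_n$ hypothesis to bound $\r{H}^\r{t}(S_\lambda,\c{U})$ at a fixed $n$, and then pass to the limit in $n$. Your closing remark correctly identifies why the $\inf_n$ hypothesis is the crux of the argument.
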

\begin{proof}
Without loss of generality suppose that $\c{U}$ is finite and let $\c{U}=\{U_1,\ldots,U_\ell\}$.
For a fixed $n\geq1$ let $k=\r{N}_{S/\ov{X}}(\ov{\c{U}}^n)$. For $i=1,\ldots,k$ and $j=0,\ldots,n-1$
there is an index $q^i_j$ with $1\leq q^i_j\leq\ell$ such that
$$S\subseteq V:=\bigcup_{i=1}^kU_{q^i_0}\times\cdots\times U_{q^i_{n-1}}\times X\times X\times\cdots$$
Since $V$ is open there is $\lambda_n$ such that for every $\lambda\geq\lambda_n$, $S_\lambda\subseteq V$ and hence
$\r{N}_{S_\lambda/\ov{X}}(\ov{\c{U}}^n)\leq k$. This implies that
$$\r{H}^\r{t}(S_\lambda,\c{U})\leq\frac{1}{n}\log\r{N}_{S_\lambda/\ov{X}}(\ov{\c{U}}^n)\leq\frac{1}{n}\log k=
\frac{1}{n}\log\r{N}_{S/\ov{X}}(\ov{\c{U}}^n)\hspace{5mm}(\lambda\geq\lambda_n).$$
The desired result follows from the above inequality.
\end{proof}
Another easy result related to Vietoris topology is as follows:
\begin{theorem}
\emph{Let $(Z_n)_{n\geq0}$ be a sequence in $\r{CL}(X)$ converging to a singleton $\{z\}$ in upper Vietoris topology. Then for
$S:=\prod_{n=0}^\infty Z_n$ we have $\r{H}^\r{t}_X(S)=0$.}
\end{theorem}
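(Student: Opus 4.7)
The plan is to fix an arbitrary open cover $\c{U}$ of $X$ and prove that $\r{H}^\r{t}(S,\c{U})=0$; the conclusion $\r{H}^\r{t}_X(S)=0$ then follows by taking the supremum over $\c{U}$. Since $\c{U}$ covers $X$ and $z\in X$, we may choose some member $U_\star\in\c{U}$ with $z\in U_\star$. Applying upper Vietoris convergence $Z_n\to\{z\}$ to the open neighborhood $U_\star$ of $\{z\}$ gives an index $N$ such that $Z_n\subseteq U_\star$ for every $n\geq N$.

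For each $k<N$, the closed subset $Z_k$ of the compact Hausdorff space $X$ is itself compact, so $\c{U}$ admits a finite subfamily $\c{U}_k\subseteq\c{U}$ of some cardinality $m_k$ that covers $Z_k$. Since $S\subseteq Z_0\times\cdots\times Z_{n-1}\times X\times X\times\cdots$ for every $n\geq 1$, combining the finite subcovers $\c{U}_0,\ldots,\c{U}_{N-1}$ on the first $N$ coordinates with the single element $\{U_\star\}$ on each of the coordinates $N,\ldots,n-1$ yields, for $n\geq N$, a subfamily of $\ov{\c{U}}^n$ that covers $S$ and whose cardinality equals the constant $C:=\prod_{k=0}^{N-1}m_k$, independent of $n$.

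Hence $\r{N}_{S/\ov{X}}(\ov{\c{U}}^n)\leq C$ for all $n\geq N$, so
$$\r{H}^\r{t}(S,\c{U})=\limsup_{n\to\infty}\frac{1}{n}\log\r{N}_{S/\ov{X}}(\ov{\c{U}}^n)\leq\limsup_{n\to\infty}\frac{\log C}{n}=0.$$
Since $\c{U}$ was arbitrary, $\r{H}^\r{t}_X(S)=0$ as claimed. No real obstacle is anticipated: the argument simply exploits the fact that once $n\geq N$ the factor $Z_n$ already lies in a single cover element, so the tail coordinates contribute nothing to the joint cover size, which is precisely what upper Vietoris convergence to the singleton $\{z\}$ supplies.
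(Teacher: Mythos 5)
Your proof is correct, and it is exactly the ``straightforward'' argument the paper omits: upper Vietoris convergence puts every tail factor $Z_n$ ($n\geq N$) inside a single cover element $U_\star$, so the count $\r{N}_{S/\ov{X}}(\ov{\c{U}}^n)$ is bounded by a constant independent of $n$ and the relative entropy vanishes for every cover $\c{U}$. The only details worth making explicit are the ones you already handled: each $Z_k$ is compact (closed in the compact Hausdorff $X$), so the finite subcovers $\c{U}_k$ exist, and the basic upper-Vietoris neighborhoods of $\{z\}$ are precisely the sets $\{K:K\subseteq U\}$ with $U$ an open set containing $z$.
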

\begin{proof}
Straightforward.
\end{proof}
The following is a simple generalization of \cite[Theorem 7.6]{Walters1}. (If $X$ is endowed with a compatible metric then for an
open cover $\c{U}$ of $X$, $\r{diam}(\c{U})$ denotes the supremum of diameters of members of $\c{U}$.)
\begin{theorem}\label{2004270830}
\emph{Let $(X,d)$ be a compact metric space and $S\subseteq\ov{X}$. Suppose that $(\c{U}_i)_i$ is a sequence of open covers of $X$ such that
$\lim_{i\to\infty}\r{diam}(\c{U}_i)=0$. Then
$$\r{H}^\r{t}(S)=\lim_{i\to\infty}\r{H}^\r{t}(S,\c{U}_i).$$}
\end{theorem}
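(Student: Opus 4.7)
The plan is to prove two inequalities, $\r{H}^\r{t}(S)\leq\liminf_i\r{H}^\r{t}(S,\c{U}_i)$ and $\limsup_i\r{H}^\r{t}(S,\c{U}_i)\leq\r{H}^\r{t}(S)$, from which both existence of the limit and the claimed equality follow. The second inequality is immediate from the definition $\r{H}^\r{t}(S)=\sup_\c{U}\r{H}^\r{t}(S,\c{U})$, so the content lies in the first.

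For the first inequality, I would fix an arbitrary open cover $\c{U}$ of $X$ and invoke the Lebesgue number lemma: since $(X,d)$ is a compact metric space, there exists $\delta>0$ such that every subset of $X$ of diameter less than $\delta$ is contained in some member of $\c{U}$. By hypothesis $\r{diam}(\c{U}_i)\to 0$, so there is $i_0$ with $\r{diam}(\c{U}_i)<\delta$ for all $i\geq i_0$. This means every member of $\c{U}_i$ is contained in some member of $\c{U}$, i.e. $\c{U}\preccurlyeq\c{U}_i$ for $i\geq i_0$. Lemma \ref{2003312345} then gives
\[
\r{H}^\r{t}(S,\c{U})\leq\r{H}^\r{t}(S,\c{U}_i)\qquad(i\geq i_0),
\]
and consequently $\r{H}^\r{t}(S,\c{U})\leq\liminf_i\r{H}^\r{t}(S,\c{U}_i)$. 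Taking the supremum over all open covers $\c{U}$ of $X$ yields $\r{H}^\r{t}(S)\leq\liminf_i\r{H}^\r{t}(S,\c{U}_i)$.

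Combining with the trivial bound $\r{H}^\r{t}(S,\c{U}_i)\leq\r{H}^\r{t}(S)$ for all $i$, one gets
\[
\r{H}^\r{t}(S)\leq\liminf_i\r{H}^\r{t}(S,\c{U}_i)\leq\limsup_i\r{H}^\r{t}(S,\c{U}_i)\leq\r{H}^\r{t}(S),
\]
so the limit exists and equals $\r{H}^\r{t}(S)$. The only nontrivial ingredient is the Lebesgue number argument, which genuinely requires compactness of $X$ and the metric hypothesis; once that is in place, everything else reduces to the monotonicity lemma and the definition of the supremum. I do not anticipate any real obstacle beyond correctly invoking the Lebesgue number lemma.
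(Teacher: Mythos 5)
Your proof is correct and follows essentially the same route as the paper: a Lebesgue number for an arbitrary (or near-optimal) cover $\c{U}$ forces $\c{U}\preccurlyeq\c{U}_i$ for large $i$, and Lemma \ref{2003312345} does the rest. Your $\liminf$/$\limsup$ packaging handles the finite and infinite cases uniformly, whereas the paper runs an $\epsilon$-argument and treats $\r{H}^\r{t}(S)=\infty$ separately, but the substance is identical.
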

\begin{proof}
Suppose that $\r{H}^\r{t}(S)<\infty$. Let $\epsilon>0$ be arbitrary and let $\c{V}$ be an open cover for $X$ such that
$\r{H}^\r{t}(S)<\r{H}^\r{t}(S,\c{V})+\epsilon$. Suppose that $\delta>0$ is a Lebesgue number for $\c{V}$ and $i_0$ be such that for every
$i\geq i_0$, $\r{diam}(\c{U}_i)<\delta$. Then for $i\geq i_0$ we have $\c{V}\preccurlyeq\c{U}_i$ and hence by Lemma \ref{2003312345},
$|\r{H}^\r{t}(S)-\r{H}^\r{t}(S,\c{U}_i)|<\epsilon$. The proof in the case $\r{H}^\r{t}(S)=\infty$ is similarly.
\end{proof}
We end this section by three funny (!) examples.
\begin{example}\label{2004121600}
\emph{For an integer $k\geq2$ we may identify the unit interval $[0,1]$ with a subset of $\ov{X}$ where $X:=\{0,\ldots,k-1\}$ via the map that
associates to any number $r\in[0,1]$ the lower representation of $r$ in the basis $X$. (Thus for instance in the case that $k=2$ for $\frac{1}{2}$
we choose the representation $(01111\cdots)$ rather than $(10000\cdots)$.) Thus for every subset of $S\subseteq[0,1]$
we may consider $\r{H}^\r{t}_X(S)$ as a measure of intrinsic chaos of $S$. Similar to the proof of Theorem \ref{2004012335}
it is easily shown that the entropy of rationals in $[0,1]$ as well as irrationals is equal to $\log k$.
In the case $k=3$ the entropy of the Cantor set is $\log2$.}
\end{example}
Any labeling of points of a space $F$ by the sequences of points of another space $X$ gives rise to an entropy theory for subsets of $F$
as we saw in Example \ref{2004121600}. Here is another example of this sort from Fractal Theory:
\begin{example}\label{2004121600}
\emph{Let $F$ denote the self-similar set induced by an $N$-tuple $(f_1,\ldots,f_N)$ of contractions on a complete metric space $Z$.
Then as it is shown in \cite[Theorem 1.2.3]{Kigami1} the $N$-tuple $(f_1,\ldots,f_N)$ induces also a canonical continuous surjection $\pi:\ov{X}\to F$
where $X:=\{1,\ldots,N\}$. Thus for any subset $S$ of $F$ we may consider the value $\r{H}^\r{t}_X(\pi^{-1}S)$ as the entropy of $S$.}
\end{example}
\begin{example}\label{2005250707}
\emph{Let $T:[0,1]\to[0,1]$ be defined by $T(x)=x$ if $x$ is irrational and $T(x)=0$ if $x$ is rational. Since the topological entropy of the identity
map on $[0,1]$ is zero it follows immediately from Theorem \ref{2004121700} that the generalized topological entropy of $T$ is also zero i.e.
$\r{H}^\r{t}(\ov{\r{Gr}}T)=0$.}
\end{example}
\section{Entropy Induced by a Sequence of Semimetrics}\label{2005300733}
In this section we generalize Bowen's notion of topological entropy of continuous mappings for subsets of a space with a sequence of
semimetrics. We begin by some notations. Let $X$ be a topological space and let $\gamma$ be a \emph{compatible semimetric} on $X$
i.e. $\gamma:X\times X\to[0,\infty)$ is a function satisfying (i) $\gamma(x,x)=0$, (ii) $\gamma(x,y)=\gamma(y,x)$,
(iii) $\gamma(x,z)\leq\gamma(x,y)+\gamma(y,z)$, and (iv) the topology of $\gamma$ (induced by the family of open balls as the basis)
is contained in the original topology of $X$. For every compact subset $K$ of $X$ and any $\epsilon>0$ we let
$\r{N}^\r{d}(K,\gamma,\epsilon)$ (resp. $\r{N}^\r{id}(K,\gamma,\epsilon)$)
denote the smallest number $k$ for which there exists a set $\{x_1,\ldots,x_k\}$ of elements of $X$ (resp. of $K$) such that
for every $x\in K$, $\gamma(x,x_i)<\epsilon$ for some $i=1,\ldots,k$. Note that since $K$ is $\gamma$-compact $k$ exists.
We let $\r{N}^\r{s}(K,\gamma,\epsilon)$ denote the largest number $\ell$ for which there is a subset
$\{y_1,\ldots,y_\ell\}$ of $K$ with $\gamma(y_i,y_j)\geq\epsilon$ for $i\neq j$.
Again since $K$ is $\gamma$-compact $\ell$ exists. In the above, the superscripts $\r{d}$, $\r{id}$, and $\r{s}$ of $\r{N}$
stand respectively for \emph{density}, \emph{intrinsic density}, and \emph{separateness}.
We have the following easily verified lemma. The proof is omitted.
\begin{lemma}\label{2005260700}
\emph{Let $X,K,\gamma$ be as above. The following statements are satisfied.
\begin{enumerate}
\item[(i)] For $0<\epsilon<\epsilon'$ we have $$\r{N}^\r{d}(K,\gamma,\epsilon')\leq\r{N}^\r{d}(K,\gamma,\epsilon),\hspace{2mm}
\r{N}^\r{id}(K,\gamma,\epsilon')\leq\r{N}^\r{id}(K,\gamma,\epsilon),\hspace{2mm}
\r{N}^\r{s}(K,\gamma,\epsilon')\leq\r{N}^\r{s}(K,\gamma,\epsilon).$$
\item[(ii)] For every $\epsilon>0$, $\r{N}^\r{id}(K,\gamma,2\epsilon)\leq\r{N}^\r{d}(K,\gamma,\epsilon)\leq\r{N}^\r{id}(K,\gamma,\epsilon)$ and
$$\r{N}^\r{s}(K,\gamma,2\epsilon)\leq\r{N}^\r{id}(K,\gamma,\epsilon)\leq\r{N}^\r{s}(K,\gamma,\epsilon).$$
\item[(iii)] If $\epsilon>0$ is a Lebesgue number for an open cover $\c{U}$ of $X$ (i.e. any subset of $X$ with diameter strictly less than
$\epsilon$ is contained in a member of $\c{U}$) then
$$\r{N}_{K/X}(\c{U})\leq\r{N}^\r{id}(K,\gamma,\frac{\epsilon}{2}).$$
\item[(iv)] For an open cover $\c{U}$ of $X$ if $\r{diam}(\c{U})<\epsilon$ then $\r{N}^\r{s}(K,\gamma,\epsilon)\leq\r{N}_{K/X}(\c{U})$.
\item[(v)] Let $K'\subseteq X$ be compact with $K'\supseteq K$ and $\gamma'$ be a compatible semimetric on $X$ with $\gamma'\geq\gamma$.
Then for every $\epsilon>0$ we have $$\r{N}^\r{d}(K,\gamma,\epsilon)\leq\r{N}^\r{d}(K',\gamma',\epsilon),\hspace{2mm}
\r{N}^\r{id}(K,\gamma,\epsilon)\leq\r{N}^\r{id}(K',\gamma',\epsilon),\hspace{2mm}
\r{N}^\r{s}(K,\gamma,\epsilon)\leq\r{N}^\r{s}(K',\gamma',\epsilon).$$
\end{enumerate}}
\end{lemma}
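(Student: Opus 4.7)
The plan is to dispatch the five parts as short exercises relating density nets, intrinsic density nets, and separated families through the triangle inequality and the Lebesgue-number property. Part (i) is immediate monotonicity: an $\epsilon$-net remains an $\epsilon'$-net when $\epsilon<\epsilon'$, and an $\epsilon'$-separated family is a fortiori $\epsilon$-separated.

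For (ii), I would first record that $\r{N}^\r{d}(K,\gamma,\epsilon)\leq\r{N}^\r{id}(K,\gamma,\epsilon)$, since every intrinsic net is a density net, and $\r{N}^\r{id}(K,\gamma,\epsilon)\leq\r{N}^\r{s}(K,\gamma,\epsilon)$, since any \emph{maximal} $\epsilon$-separated family in $K$ is, by maximality, automatically an intrinsic $\epsilon$-net. The two remaining inequalities lose a factor of $2$ in the radius and use the triangle inequality directly: starting from a density $\epsilon$-net $\{x_1,\ldots,x_k\}\subseteq X$, one relocates each $x_i$ to a witness $y_i\in B_\gamma(x_i,\epsilon)\cap K$ whenever this ball meets $K$, giving an intrinsic $2\epsilon$-net of size at most $k$ and hence $\r{N}^\r{id}(K,\gamma,2\epsilon)\leq\r{N}^\r{d}(K,\gamma,\epsilon)$; and a $2\epsilon$-separated family in $K$ cannot have two members within $\gamma$-distance $\epsilon$ of the same intrinsic net center, so the natural assignment into an intrinsic $\epsilon$-net is injective, yielding $\r{N}^\r{s}(K,\gamma,2\epsilon)\leq\r{N}^\r{id}(K,\gamma,\epsilon)$.

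For (iii) I would take an optimal intrinsic $\epsilon/2$-net in $K$; each ball $B_\gamma(y_i,\epsilon/2)$ has all pairwise distances strictly less than $\epsilon$, so by the Lebesgue property it is contained in some member of $\c{U}$, producing a subcover of $K$ of the correct cardinality. Part (iv) is the dual packing bound: in any subcover of $K$ by $\r{N}_{K/X}(\c{U})$ members of $\c{U}$, each of diameter strictly less than $\epsilon$, an $\epsilon$-separated family in $K$ must occupy pairwise distinct members. For (v), the density and separation comparisons are immediate from $K\subseteq K'$ and $\gamma\leq\gamma'$: a density $\epsilon$-net for $(K',\gamma')$ restricts to a density $\epsilon$-net for $(K,\gamma)$, since $\gamma\leq\gamma'$ shrinks the relevant distance; and an $\epsilon$-separated subset of $K$ with respect to $\gamma$ lies in $K'$ and remains $\epsilon$-separated with respect to the larger $\gamma'$.

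The main obstacle I anticipate is the intrinsic-density clause of (v). The natural idea of restricting an intrinsic $\epsilon$-net for $K'$ to $K$ fails because the centers live in $K'\setminus K$ in general, and the repositioning trick from (ii) costs a factor of $2$ in the radius. I would handle this step either by a minimality argument on the intrinsic net for $K'$ (keeping only indices whose $\gamma$-ball meets $K$ and replacing each center by a $K$-point) while carefully tracking what the hypothesis $\gamma'\geq\gamma$ buys, or, failing that, by routing around it via (ii) and passing through the density and separation counts — both of which transfer cleanly and are what is actually needed for the entropy-level comparisons in the following section.
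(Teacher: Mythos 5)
Your write-up of (i), (ii) and (iv) is the standard argument and is correct; note that the paper offers no proof to compare against (it declares the lemma ``easily verified'' and omits the proof), so the only question is whether your argument closes. For (iii) there is one hair to split against the paper's parenthetical definition of Lebesgue number: the ball $B_\gamma(y_i,\epsilon/2)$ has all \emph{pairwise} distances strictly less than $\epsilon$, but its diameter is a supremum and can equal $\epsilon$ exactly, so ``diameter strictly less than $\epsilon$'' need not apply to it. This is harmless --- use the conventional form of the Lebesgue covering lemma (every set of diameter at most $\delta$, or every $\delta$-ball, lies in a member), or note that every application of (iii) in the paper sits inside a $\lim_{\epsilon\to0}$ where one may shrink $\epsilon$ slightly --- but as written that step does not literally follow from the stated hypothesis.

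Your suspicion about the intrinsic-density clause of (v) is well founded, and you should not try to prove it: it is false as stated. Take $X=\bb{R}$ with $\gamma=\gamma'$ the Euclidean metric, $K=\{0,2\}\subseteq K'=\{0,1,2\}$ and $\epsilon=3/2$; then $\r{N}^\r{id}(K',\gamma',\epsilon)=1$ (center at $1$), while $\r{N}^\r{id}(K,\gamma,\epsilon)=2$, since neither $0$ nor $2$ lies within $3/2$ of the other. So your proposed ``minimality argument'' cannot succeed, and your fallback is the correct repair: the relocation trick only yields the weaker $\r{N}^\r{id}(K,\gamma,2\epsilon)\leq\r{N}^\r{id}(K',\gamma',\epsilon)$, whereas the density and separation clauses of (v) transfer cleanly exactly as you say. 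Since by Proposition \ref{2004041424} all three counts give the same value after $\lim_{\epsilon\to0}\limsup_n$, this weakened form suffices for every use the paper makes of (v) (the monotonicity remark after Definition \ref{2005300731}, Theorem \ref{2005310808}(iii), and the product theorem). Record the counterexample, state (v) only for $\r{N}^\r{d}$ and $\r{N}^\r{s}$ (or with $2\epsilon$ for $\r{N}^\r{id}$), and the rest of your proof stands.
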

\begin{proposition}\label{2004041424}
\emph{Let $X,K$ be as above. Let $(\gamma_n)_{n\geq1}$ be a (not necessarily increasing) sequence of compatible semimetrics on $X$. Then
\begin{equation}\label{2004032232}
\begin{split}
\lim_{\epsilon\to0}\limsup_{n\to\infty}\frac{1}{n}\log\r{N}^\r{d}(K,\gamma_n,\epsilon)&=
\lim_{\epsilon\to0}\limsup_{n\to\infty}\frac{1}{n}\log\r{N}^\r{id}(K,\gamma_n,\epsilon)\\
&=\lim_{\epsilon\to0}\limsup_{n\to\infty}\frac{1}{n}\log\r{N}^\r{s}(K,\gamma_n,\epsilon).
\end{split}
\end{equation}}
\end{proposition}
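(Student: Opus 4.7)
The proof is essentially a squeeze argument built directly on Lemma \ref{2005260700}(ii), applied semimetric-by-semimetric along the sequence $(\gamma_n)$. The plan is to show the two equalities separately, each via a two-sided sandwich of the inner $\limsup_n$ quantities, and then exploit the fact that the outer limit in $\epsilon$ is really a supremum over a monotone net, so that replacing $\epsilon$ by $2\epsilon$ inside has no effect.

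First I would note that by Lemma \ref{2005260700}(i), each of the three functions $\epsilon\mapsto \r{N}^{\r{d}}(K,\gamma_n,\epsilon)$, $\epsilon\mapsto \r{N}^{\r{id}}(K,\gamma_n,\epsilon)$, $\epsilon\mapsto \r{N}^{\r{s}}(K,\gamma_n,\epsilon)$ is non-increasing in $\epsilon$. Consequently each $\limsup_n \tfrac{1}{n}\log(\cdot)$ is a non-increasing function of $\epsilon$, so the outer limit $\lim_{\epsilon\to 0}$ exists in $[0,\infty]$ and equals $\sup_{\epsilon>0}$ of that function. In particular, for any fixed positive constant $c$, $\lim_{\epsilon\to 0}$ and $\lim_{c\epsilon\to 0}$ produce the same value; this is the technical observation that lets factors of $2$ be absorbed freely.

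Next, I would apply Lemma \ref{2005260700}(ii) with $\gamma=\gamma_n$ to get, for every $n$ and every $\epsilon>0$,
\begin{equation*}
\r{N}^{\r{id}}(K,\gamma_n,2\epsilon)\leq \r{N}^{\r{d}}(K,\gamma_n,\epsilon)\leq \r{N}^{\r{id}}(K,\gamma_n,\epsilon),
\end{equation*}
\begin{equation*}
\r{N}^{\r{s}}(K,\gamma_n,2\epsilon)\leq \r{N}^{\r{id}}(K,\gamma_n,\epsilon)\leq \r{N}^{\r{s}}(K,\gamma_n,\epsilon).
\end{equation*}
Taking $\log$, dividing by $n$, and applying $\limsup_{n\to\infty}$ preserves these inequalities (the logarithm is monotone, and $\limsup$ is monotone). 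Then taking $\lim_{\epsilon\to 0}$ of each side and using the observation above that $\lim_{\epsilon\to 0}=\lim_{2\epsilon\to 0}$, the two outer bounds collapse onto the middle term, giving the $\r{d}=\r{id}$ and $\r{id}=\r{s}$ equalities. Stringing them yields (\ref{2004032232}).

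There is no real obstacle here; the only point to be careful about is the step that identifies $\lim_{\epsilon\to 0}$ with $\lim_{2\epsilon\to 0}$ (which could fail if the inner $\limsup_n$ were not monotone in $\epsilon$, but it is). Nothing in the argument uses that $(\gamma_n)$ is increasing or has any other structure beyond each $\gamma_n$ being a compatible semimetric making $K$ compact in its topology, which is precisely the standing hypothesis inherited from the preamble of the section.
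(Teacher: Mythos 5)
Your proposal is correct and follows essentially the same route as the paper's own proof: establish via Lemma \ref{2005260700}(i) that the inner $\limsup_n$ quantities are non-increasing in $\epsilon$ so that each outer limit exists and equals a supremum, then squeeze using the two sandwich inequalities from Lemma \ref{2005260700}(ii). The only difference is that you spell out the absorption of the factor $2$ slightly more explicitly than the paper does.
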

\begin{proof}
By Lemma \ref{2005260700}(i) if $\epsilon\leq\epsilon'$ then
$$\limsup_{n\to\infty}\frac{1}{n}\log\r{N}^\r{d}(K,\gamma_n,\epsilon')\leq\limsup_{n\to\infty}\frac{1}{n}\log\r{N}^\r{d}(K,\gamma_n,\epsilon).$$
This shows that the first $\lim_{\epsilon}$ in (\ref{2004032232}) exists an equals to $\sup_{\epsilon}$.
Similarly it is proved that the other two $\lim$s in (\ref{2004032232}) exist and are $\sup$s. By Lemma \ref{2005260700}(ii) we have
\begin{equation}\label{2005260633}
\frac{1}{n}\log\r{N}^\r{id}(K,\gamma_n,2\epsilon)\leq\frac{1}{n}\log\r{N}^\r{d}(K,\gamma_n,\epsilon)\leq
\frac{1}{n}\log\r{N}^\r{id}(K,\gamma_n,\epsilon).
\end{equation}
\begin{equation}\label{2004211430}
\frac{1}{n}\log\r{N}^\r{s}(K,\gamma_n,2\epsilon)\leq\frac{1}{n}\log\r{N}^\r{id}(K,\gamma_n,\epsilon)\leq
\frac{1}{n}\log\r{N}^\r{s}(K,\gamma_n,\epsilon).
\end{equation}
Now (\ref{2004032232}) follows from (\ref{2005260633}) and (\ref{2004211430}).
\end{proof}
\begin{definition}\label{2005300731}
\emph{Let $X$ be a topological space and $\Gamma=(\gamma_n)_{n\geq1}$ be an increasing sequence of compatible semimetrics on $X$.
For every compact $K\subseteq X$ we denote the common value in (\ref{2004032232}) by $\r{H}^\r{B}_{\Gamma}(K)$.
For any subset $S$ of $X$ the \emph{Bowen entropy of $S$ with respect to $\Gamma$} is defined to be
$$\r{H}^\r{B}_\Gamma(S):=\sup\r{H}^\r{B}_{\Gamma}(K)$$
where the supremum is taken over all compact subsets $K$ of $X$ contained in $S$.}
\end{definition}
If $K,K'$ are compact with $K\subseteq K'$ then $\r{H}^\r{B}_{\Gamma}(K)\leq\r{H}^\r{B}_{\Gamma}(K')$.
(See Theorem \ref{2005310808}(iii) below.) Thus the notion of $\r{H}^\r{B}_\Gamma$ is well-defined.
Also it is easily seen from (\ref{2004032232}) that the value $\r{H}^\r{B}_{\Gamma}(K)$ is independent from $X$. This means that
if $K\subseteq X'\subseteq X$ then the Bowen entropies of $K$ as subsets of $X$ and of $X'$ coincide. Thus to find $\r{H}^\r{B}_\Gamma(S)$
we may ignore $X$ and restrict $\gamma_n$ to $S$.

In the reminder of this section we consider some basic properties of the notion $\r{H}^\r{B}_\Gamma$.
\begin{theorem}\label{2005310808}
\emph{Let $\Gamma=(\gamma_n)_n,\Gamma'=(\gamma_n')_n$ be increasing sequences of compatible semimetrics on $X$ and $S,S'\subseteq X$.
The following statements hold.
\begin{enumerate}
\item[(i)] If $\Gamma$ is a constant sequence (i.e. $\gamma_n=\gamma$ for every $n$) then $\r{H}^\r{B}_{\Gamma}(S)=0$.
\item[(ii)] For a constant $c>0$ let $c\Gamma$ denote the sequence $(c\gamma_n)_n$. Then $\r{H}^\r{B}_{c\Gamma}(S)=\r{H}^\r{B}_{\Gamma}(S)$.
\item[(iii)] If $S\subseteq S'$ and $\gamma_n\leq\gamma'_n$ for every $n$ then $\r{H}^\r{B}_{\Gamma}(S)\leq\r{H}^\r{B}_{\Gamma'}(S')$.
\item[(iv)] If there is a constant $c>0$ such that $\gamma_n\leq c\gamma'_n$ for every $n$ then
$\r{H}^\r{B}_{\Gamma}(S)\leq\r{H}^\r{B}_{\Gamma'}(S)$.
\item[(v)] If $\Gamma$ and $\Gamma'$ are \emph{equi strongly-uniform-equivalent} i.e.
there exist constants $c,c'>0$ such that $c'\gamma'_n\leq\gamma_n\leq c\gamma'_n$ for every $n$, then
$\r{H}^\r{B}_{\Gamma}(S)=\r{H}^\r{B}_{\Gamma'}(S)$.
\item[(vi)] If $\sup_n\gamma_n$ is a compatible semimetric on $X$ then $\r{H}^\r{B}_{\Gamma}(S)=0$.
\end{enumerate}}
\end{theorem}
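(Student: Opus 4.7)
The plan is to derive all six items as direct consequences of Proposition \ref{2004041424} (which supplies three equivalent formulas for $\r{H}^\r{B}_\Gamma(K)$) and the monotonicity statements of Lemma \ref{2005260700}. The only mildly nontrivial manipulation is the interplay between the outer $\lim_{\epsilon\to 0}$ and a rescaling of the semimetrics, and every other move is pure bookkeeping.

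First I would handle (i)--(iii). For (i), a constant sequence $\gamma_n\equiv\gamma$ makes $\r{N}^\r{d}(K,\gamma_n,\epsilon)$ independent of $n$, so $\tfrac1n\log\r{N}^\r{d}(K,\gamma,\epsilon)\to 0$ for every fixed $\epsilon$, and the outer $\sup_\epsilon$ is still $0$. For (ii), I would use the elementary scaling identity $\r{N}^\r{d}(K,c\gamma_n,\epsilon)=\r{N}^\r{d}(K,\gamma_n,\epsilon/c)$, which shows that replacing $\gamma_n$ by $c\gamma_n$ merely reparameterizes the inner variable $\epsilon$ of the outer limit $\lim_{\epsilon\to 0}$, leaving the common value unchanged. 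For (iii), applying Lemma \ref{2005260700}(v) with the same compact set $K\subseteq S$ and the termwise inequality $\gamma_n\leq\gamma_n'$ gives $\r{N}^\r{d}(K,\gamma_n,\epsilon)\leq\r{N}^\r{d}(K,\gamma_n',\epsilon)$, which passes through both $\limsup_n$ and $\lim_{\epsilon\to 0}$; since any compact $K\subseteq S$ is also a compact subset of $S'$, taking the supremum over such $K$ yields $\r{H}^\r{B}_\Gamma(S)\leq\r{H}^\r{B}_{\Gamma'}(S')$.

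The remaining three items follow from those already established. For (iv), I would apply (iii) with the increasing sequence $c\Gamma'=(c\gamma_n')_n$ in place of $\Gamma'$ to obtain $\r{H}^\r{B}_\Gamma(S)\leq\r{H}^\r{B}_{c\Gamma'}(S)$, and then invoke (ii) to conclude $\r{H}^\r{B}_{c\Gamma'}(S)=\r{H}^\r{B}_{\Gamma'}(S)$. Part (v) is just (iv) applied in both directions. Finally for (vi), if $\gamma:=\sup_n\gamma_n$ happens to be a compatible semimetric on $X$, then $\gamma_n\leq\gamma$ for every $n$, so comparing $\Gamma$ with the constant sequence $(\gamma,\gamma,\ldots)$ via (iii) and then applying (i) gives $\r{H}^\r{B}_\Gamma(S)\leq 0$, hence $\r{H}^\r{B}_\Gamma(S)=0$.

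The only step requiring any care is the scaling identity used for (ii) and its cascade into (iv); everything else reduces to termwise monotonicity of covering numbers. No step is a genuine obstacle, so the whole theorem should be little more than a well-organized catalogue of applications of Lemma \ref{2005260700}.
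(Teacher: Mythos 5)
Your proposal is correct and follows essentially the same route as the paper: (i) and (iii) from the definitions and Lemma \ref{2005260700}(v), (ii) from the rescaling identity for covering numbers (the paper states it as $\r{N}^\r{id}(K,c\gamma_n,c\epsilon)=\r{N}^\r{id}(K,\gamma_n,\epsilon)$, equivalent to your version), and (iv), (v), (vi) as formal consequences of the earlier items. The only difference is that you spell out the bookkeeping the paper leaves implicit.
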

\begin{proof}
(i) is trivial. (ii) follows from $\r{N}^\r{id}(K,c\gamma_n,c\epsilon)=\r{N}^\r{id}(K,\gamma_n,\epsilon)$.
(iii) follows from Lemma \ref{2005260700}(v). (iv) follows from (ii) and (iii). (v) follows from (iv). (vi) follows from (i) and (iii).
\end{proof}
\begin{theorem}\label{2006010740}
\emph{Let $\Gamma,\Gamma',S$ be as in Theorem \ref{2005310808}.
\begin{enumerate}
\item[(i)] Suppose that for every $\epsilon>0$ there exists $\delta>0$ such that for every $n$  and every $x,x'\in X$ if
$\gamma'_n(x,x')<\delta$ then $\gamma_n(x,x')<\epsilon$. Then $\r{H}^\r{B}_{\Gamma}(S)\leq\r{H}^\r{B}_{\Gamma'}(S)$.
\item[(ii)] Suppose that $\Gamma$ and $\Gamma'$ are \emph{equi uniformly-equivalent}
i.e. for every $\epsilon>0$ there exists $\delta>0$ such that for every $n$ and every $x,x'\in X$ if $\gamma'_n(x,x')<\delta$
then $\gamma_n(x,x')<\epsilon$, and if $\gamma_n(x,x')<\delta$ then $\gamma'_n(x,x')<\epsilon$. Then
$$\r{H}^\r{B}_{\Gamma}(S)=\r{H}^\r{B}_{\Gamma'}(S).$$
\end{enumerate}}
\end{theorem}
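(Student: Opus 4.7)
The plan is to reduce (i) to a direct comparison of spanning numbers $\r{N}^\r{d}$ at suitably matched scales, then pass through Proposition \ref{2004041424} to translate the resulting inequality into one between Bowen entropies. The hypothesis is a uniform-in-$n$ modulus-of-continuity condition: $\gamma'_n$-closeness at scale $\delta$ forces $\gamma_n$-closeness at scale $\epsilon$. So I would first fix a compact $K\subseteq S$ and $\epsilon>0$, pick the $\delta=\delta(\epsilon)$ furnished by the hypothesis, and observe that any finite subset $\{x_1,\ldots,x_k\}\subseteq X$ that $\delta$-spans $K$ with respect to $\gamma'_n$ automatically $\epsilon$-spans $K$ with respect to $\gamma_n$ (each $x\in K$ has some $x_i$ with $\gamma'_n(x,x_i)<\delta$, hence $\gamma_n(x,x_i)<\epsilon$). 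This gives the key pointwise inequality
\[
\r{N}^\r{d}(K,\gamma_n,\epsilon)\leq\r{N}^\r{d}(K,\gamma'_n,\delta)\qquad(n\geq1).
\]

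Next I would take $\tfrac{1}{n}\log$ and $\limsup_{n\to\infty}$ of both sides, bound the right-hand side by $\sup_{\delta'>0}\limsup_n\tfrac{1}{n}\log\r{N}^\r{d}(K,\gamma'_n,\delta')$, and identify this supremum with $\r{H}^\r{B}_{\Gamma'}(K)$ via Proposition \ref{2004041424} (which, as used in its own proof, tells us the $\lim_{\epsilon\to0}$ is actually a $\sup$). Letting $\epsilon\to0$ on the left and using Proposition \ref{2004041424} again yields $\r{H}^\r{B}_\Gamma(K)\leq\r{H}^\r{B}_{\Gamma'}(K)$. Taking the supremum over compact $K\subseteq S$ and invoking Definition \ref{2005300731} gives part (i).

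Part (ii) is then immediate: the equi-uniform-equivalence hypothesis is symmetric in $\Gamma$ and $\Gamma'$, so (i) applies with the roles reversed and both inequalities combine to equality.

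I do not anticipate a genuine obstacle here; the argument is a routine uniform-continuity comparison of spanning numbers at matched scales. The only care required is bookkeeping of which semimetric owns the smaller radius: the hypothesis forces $\delta$ to be attached to $\gamma'_n$ and $\epsilon$ to $\gamma_n$, which is why the resulting entropy inequality runs in the direction $\r{H}^\r{B}_\Gamma\leq\r{H}^\r{B}_{\Gamma'}$ rather than the reverse.
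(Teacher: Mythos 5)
Your proof is correct and follows essentially the same route as the paper: a scale-matched comparison of counting functions followed by Proposition \ref{2004041424} and the $\lim_{\epsilon\to0}=\sup_\epsilon$ observation. The only (immaterial) difference is that you compare spanning numbers $\r{N}^\r{d}$ via the direct implication, whereas the paper compares separated sets $\r{N}^\r{s}$ via the contrapositive; both are interchangeable by Proposition \ref{2004041424}.
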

\begin{proof}
(i) Let $\epsilon>0$ and suppose that $\delta>0$ satisfies in the condition stated above. For every compact $K$ in $X$ we have
$\r{N}^\r{s}(K,\gamma_n,\epsilon)\leq\r{N}^\r{s}(K,\gamma'_n,\delta)$ and hence
$$\limsup_{n\to\infty}\frac{1}{n}\log\r{N}^\r{s}(K,\gamma_n,\epsilon)\leq\limsup_{n\to\infty}\frac{1}{n}\log\r{N}^\r{s}(K,\gamma'_n,\delta)
\leq\r{H}^\r{B}_{\Gamma'}(K).$$
This implies that $\r{H}^\r{B}_{\Gamma}(K)\leq\r{H}^\r{B}_{\Gamma'}(K)$. Hence $\r{H}^\r{B}_{\Gamma}(S)\leq\r{H}^\r{B}_{\Gamma'}(S)$.
(ii) follows from (i).
\end{proof}
Note that (i) and (ii) of Theorem \ref{2006010740} are extensions of (iv) and (v) of Theorem \ref{2005310808}.
\begin{theorem}
\emph{Let $X,\Gamma,S$ be as in Theorem \ref{2005310808}. Suppose that $S_1,\ldots,S_k$ are closed subsets of $\ov{X}$ such that
$S\subseteq\cup_{i=1}^kS_i$. Then
\begin{equation}\label{2004240850}
\r{H}^\r{B}_{\Gamma}(S)\leq\max_{i=1,\ldots,k}\r{H}^\r{B}_{\Gamma}(S_i).
\end{equation}}
\end{theorem}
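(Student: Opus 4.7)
The natural approach is to reduce to the compact case and then exploit that a union decomposition of a compact set gives at worst a multiplicative-in-$k$ overhead on the density covering numbers, which disappears after taking $\frac{1}{n}\log$. First I would fix an arbitrary compact $K\subseteq S$ and set $K_i:=K\cap S_i$. Since each $S_i$ is closed, each $K_i$ is closed in the compact set $K$, hence compact, and $K=\bigcup_{i=1}^k K_i$. By the monotonicity contained in Lemma \ref{2005260700}(v) (and the definition of $\r{H}^\r{B}_\Gamma$ on subsets) we have $\r{H}^\r{B}_\Gamma(K_i)\leq\r{H}^\r{B}_\Gamma(S_i)$, so it suffices to establish
\[
\r{H}^\r{B}_\Gamma(K)\leq\max_{i=1,\ldots,k}\r{H}^\r{B}_\Gamma(K_i),
\]
since then taking the supremum over compact $K\subseteq S$ yields (\ref{2004240850}).

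The key inequality at the covering-number level is that any $\epsilon$-dense set for $K_i$ with respect to $\gamma_n$ yields, by union, an $\epsilon$-dense set for $K$. Hence for every $n\geq1$ and $\epsilon>0$,
\[
\r{N}^\r{d}(K,\gamma_n,\epsilon)\leq\sum_{i=1}^k\r{N}^\r{d}(K_i,\gamma_n,\epsilon)\leq k\max_{i}\r{N}^\r{d}(K_i,\gamma_n,\epsilon).
\]
Applying $\frac{1}{n}\log$ and using that the logarithm commutes with $\max$ over a finite family,
\[
\frac{1}{n}\log\r{N}^\r{d}(K,\gamma_n,\epsilon)\leq\frac{\log k}{n}+\max_{i}\frac{1}{n}\log\r{N}^\r{d}(K_i,\gamma_n,\epsilon).
\]

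Next I take $\limsup_{n\to\infty}$ of both sides. The term $\frac{\log k}{n}$ vanishes, and the standard fact that $\limsup_n\max_{i\leq k}f_i(n)=\max_{i\leq k}\limsup_n f_i(n)$ for finite families (one direction is trivial; the other follows because for any $\eta>0$ all $f_i(n)$ are eventually below $\max_i\limsup_n f_i(n)+\eta$) lets me exchange $\limsup$ and $\max$. After that, taking $\lim_{\epsilon\to0}$ (which, by Proposition \ref{2004041424}, is a supremum over $\epsilon>0$) commutes with the finite maximum, giving
\[
\r{H}^\r{B}_\Gamma(K)\leq\max_{i=1,\ldots,k}\r{H}^\r{B}_\Gamma(K_i).
\]

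I do not anticipate any serious obstacle: the only subtlety is the interchange of $\limsup_n$ with $\max_i$ (routine for finite index sets) and the interchange of $\lim_{\epsilon\to0}$ with $\max_i$ (routine because each $g_i(\epsilon):=\limsup_n\frac{1}{n}\log\r{N}^\r{d}(K_i,\gamma_n,\epsilon)$ is monotone in $\epsilon$, so the $\epsilon$-limit is a $\sup$ and hence commutes with finite $\max$). Everything else is the standard packaging of the definition of $\r{H}^\r{B}_\Gamma$ on arbitrary subsets via compact subsets.
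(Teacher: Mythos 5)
Your proof is correct and follows essentially the same route as the paper's: restrict to a compact $K\subseteq S$, decompose it as $\bigcup_i(K\cap S_i)$, use finite subadditivity of the counting function under unions, and absorb the factor $k$ via $\frac{1}{n}\log k\to0$ together with the interchange of $\limsup_n$ with a finite $\max$. The only cosmetic differences are that you work with $\mathrm{N}^{\mathrm{d}}$ while the paper works with $\mathrm{N}^{\mathrm{s}}$ (interchangeable by Proposition \ref{2004041424}), and that you invoke the $\limsup$--$\max$ exchange directly where the paper extracts subsequences to the same effect.
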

\begin{proof}
Let $L,L_1,\ldots,L_k$ be compact subsets of $\ov{X}$ with $L\subseteq\cup_{i=1}^kL_i$. For $\epsilon>0$ we have
\begin{equation}\label{2004240820}
\r{N}^\r{s}(L,\gamma_n,\epsilon)\leq\sum_{i=1}^k\r{N}^\r{s}(L_i,\gamma_n,\epsilon).
\end{equation}
Let $(n_\ell)_\ell$ be a subsequence of natural numbers such that
\begin{equation}\label{2004240835}
\lim_{\ell\to\infty}\frac{1}{n_\ell}\log\r{N}^\r{s}(L,\gamma_{n_\ell},\epsilon)=
\limsup_{n\to\infty}\frac{1}{n}\log\r{N}^\r{s}(L,\gamma_n,\epsilon).
\end{equation}
Then (\ref{2004240820}) shows that there exist some $i_0$ with $1\leq i_0\leq k$ and a subsequence $(n_{\ell_j})_j$ of $(n_\ell)_\ell$ such that
$\sum_{i=1}^k\r{N}^\r{s}(L_i,\gamma_{n_{\ell_j}},\epsilon)\leq k\r{N}^\r{s}(L_{i_0},\gamma_{n_{\ell_j}},\epsilon)$.
Thus by (\ref{2004240835}) we have
\begin{equation}\label{2004240845}
\begin{split}
\limsup_{n\to\infty}\frac{1}{n}\log\r{N}^\r{s}(L,\gamma_{n},\epsilon)&\leq
\limsup_{j\to\infty}\frac{1}{n_{\ell_j}}\big(k+\log\r{N}^\r{s}(L_{i_0},\gamma_{n_{\ell_j}},\epsilon)\big)\\
&\leq\limsup_{n\to\infty}\frac{1}{n}\log\r{N}^\r{s}(L_{i_0},\gamma_{n},\epsilon)\\
&\leq\max_{i=1,\ldots,k}\limsup_{n\to\infty}\frac{1}{n}\log\r{N}^\r{s}(L_i,\gamma_{n},\epsilon)
\end{split}
\end{equation}
Letting $\epsilon\to0$ in (\ref{2004240845}) we find that
$\r{H}^\r{B}_{\Gamma}(L)\leq\max_{i=1,\ldots,k}\r{H}^\r{B}_{\Gamma}(L_i)$.
Now (\ref{2004240850}) follows from the latter inequality by letting $L_i=L\cap S_i$ where $L\subseteq S$.
\end{proof}
\begin{theorem}
\emph{Let $\Gamma=(\gamma_n)_n,\Sigma=(\sigma_n)_n$ be increasing sequences of compatible semimetrics on topological spaces $X,Y$ respectively.
Let $\gamma_n\times\sigma_n$ be the semimetric on $X\times Y$ given by
$\big((x,y),(x',y')\big)\mapsto\max\big(\gamma_n(x,x'),\sigma_n(y,y')\big)$ and denote by $\Gamma\times\Sigma$ the sequence
$(\gamma_n\times\sigma_n)_n$. Then for every $S\subseteq X$ and $T\subseteq Y$ we have
$$\r{H}^\r{B}_{\Gamma\times\Sigma}(S\times T)\leq\r{H}^\r{B}_{\Gamma}(S)+\r{H}^\r{B}_{\Sigma}(T).$$}
\end{theorem}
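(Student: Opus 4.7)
The plan is to use the spanning-set (density) formulation from Proposition \ref{2004041424}, since the max-type product semimetric behaves multiplicatively on $\epsilon$-dense sets. I would first check that $\Gamma\times\Sigma$ really is an increasing sequence of compatible semimetrics on $X\times Y$: each $\gamma_n\times\sigma_n$ is a semimetric (the max of two semimetrics), its open balls are intersections of $\gamma_n$- and $\sigma_n$-balls and are therefore open in $X\times Y$, and monotonicity of the sequence is inherited from $\Gamma$ and $\Sigma$. So Definition \ref{2005300731} applies.

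Next, the key combinatorial step. Given compact $K\subseteq X$ and $L\subseteq Y$, if $\{x_1,\ldots,x_a\}\subseteq X$ is $\epsilon$-dense in $K$ with respect to $\gamma_n$ and $\{y_1,\ldots,y_b\}\subseteq Y$ is $\epsilon$-dense in $L$ with respect to $\sigma_n$, then for any $(x,y)\in K\times L$ one finds $x_i,y_j$ with $\gamma_n(x,x_i)<\epsilon$ and $\sigma_n(y,y_j)<\epsilon$, whence $(\gamma_n\times\sigma_n)((x,y),(x_i,y_j))<\epsilon$. Hence
\begin{equation*}
\r{N}^\r{d}(K\times L,\gamma_n\times\sigma_n,\epsilon)\leq\r{N}^\r{d}(K,\gamma_n,\epsilon)\cdot\r{N}^\r{d}(L,\sigma_n,\epsilon).
\end{equation*}
Taking logarithms, dividing by $n$, and using $\limsup(a_n+b_n)\leq\limsup a_n+\limsup b_n$ yields
\begin{equation*}
\limsup_{n\to\infty}\tfrac{1}{n}\log\r{N}^\r{d}(K\times L,\gamma_n\times\sigma_n,\epsilon)
\leq\limsup_{n\to\infty}\tfrac{1}{n}\log\r{N}^\r{d}(K,\gamma_n,\epsilon)+\limsup_{n\to\infty}\tfrac{1}{n}\log\r{N}^\r{d}(L,\sigma_n,\epsilon).
\end{equation*}
Letting $\epsilon\to0$ and invoking Proposition \ref{2004041424} on all three terms gives $\r{H}^\r{B}_{\Gamma\times\Sigma}(K\times L)\leq\r{H}^\r{B}_{\Gamma}(K)+\r{H}^\r{B}_{\Sigma}(L)$.

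Finally I would pass from products of compacts to arbitrary compacts inside $S\times T$. For any compact $M\subseteq S\times T$, put $K:=\pi_X(M)$ and $L:=\pi_Y(M)$; these are compact as continuous images of $M$, and $M\subseteq K\times L$ with $K\subseteq S$, $L\subseteq T$. By Lemma \ref{2005260700}(v) (taking $\gamma'=\gamma$) the function $\r{N}^\r{d}(\cdot,\gamma_n\times\sigma_n,\epsilon)$ is monotone in its compact argument, so $\r{H}^\r{B}_{\Gamma\times\Sigma}(M)\leq\r{H}^\r{B}_{\Gamma\times\Sigma}(K\times L)\leq\r{H}^\r{B}_{\Gamma}(K)+\r{H}^\r{B}_{\Sigma}(L)\leq\r{H}^\r{B}_{\Gamma}(S)+\r{H}^\r{B}_{\Sigma}(T)$. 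Taking the supremum over all such $M$ gives the desired inequality.

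The only mildly delicate point is the use of the $\limsup$ subadditivity, which is why the density formulation (rather than, say, the separateness formulation) is convenient here: the product inequality $\r{N}^\r{d}(K\times L,\cdot)\leq\r{N}^\r{d}(K,\cdot)\r{N}^\r{d}(L,\cdot)$ is clean and does not require matching the $\epsilon$-scales between the two factors. Everything else is routine once Proposition \ref{2004041424} and Lemma \ref{2005260700}(v) are in hand, so I do not anticipate a real obstacle.
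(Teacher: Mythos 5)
Your proof is correct and follows the same two-step route as the paper: a submultiplicativity estimate for the product of compacts $K\times L$, followed by the reduction of an arbitrary compact $M\subseteq S\times T$ to $\r{pr}_X(M)\times\r{pr}_Y(M)$ via Lemma \ref{2005260700}(v). The only difference is that you count with $\r{N}^\r{d}$ where the paper counts with $\r{N}^\r{s}$, and your choice is in fact the cleaner one: the paper asserts $\r{N}^\r{s}(K\times L,\gamma_n\times\sigma_n,\epsilon)=\r{N}^\r{s}(K,\gamma_n,\epsilon)\,\r{N}^\r{s}(L,\sigma_n,\epsilon)$, but for separated sets only the inequality $\geq$ is immediate (a product of separated sets is separated for the max semimetric), so that step really needs a detour through spanning numbers via Lemma \ref{2005260700}(ii), whereas your inequality $\r{N}^\r{d}(K\times L,\gamma_n\times\sigma_n,\epsilon)\leq\r{N}^\r{d}(K,\gamma_n,\epsilon)\,\r{N}^\r{d}(L,\sigma_n,\epsilon)$ holds directly and feeds into Proposition \ref{2004041424} without any rescaling of $\epsilon$.
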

\begin{proof}
Let $K,L$ be compact subspaces with $K\subseteq S,L\subseteq T$. For every $n$ and $\epsilon>0$ we have
$$\r{N}^\r{s}(K\times L,\gamma_n\times\sigma_n,\epsilon)=\r{N}^\r{s}(K,\gamma_n,\epsilon)\r{N}^\r{s}(L,\sigma_n,\epsilon).$$
It follows that
\begin{equation}\label{2005310838}
\r{H}^\r{B}_{\Gamma\times\Sigma}(K\times L)\leq\r{H}^\r{B}_{\Gamma}(K)+\r{H}^\r{B}_{\Sigma}(L)\leq\r{H}^\r{B}_{\Gamma}(S)+\r{H}^\r{B}_{\Sigma}(T).
\end{equation}
Now suppose that $M\subseteq S\times T$ is a compact subspace of $X\times Y$. Then $M\subseteq\r{pr}_X(M)\times\r{pr}_Y(M)$
where $\r{pr}_X,\r{pr}_Y$ denote the canonical projections from $X\times Y$ onto $X$ and $Y$. Note that $\r{pr}_X(M)$ and $\r{pr}_Y(M)$
are compact and also $\r{pr}_X(M)\subseteq S$ and $\r{pr}_Y(M)\subseteq T$. It follows from Lemma \ref{2005260700}(v) that
$\r{H}^\r{B}_{\Gamma\times\Sigma}(M)\leq\r{H}^\r{B}_{\Gamma\times\Sigma}\big(\r{pr}_X(M)\times\r{pr}_Y(M)\big)$. Thus (\ref{2005310838})
implies that
$$\r{H}^\r{B}_{\Gamma\times\Sigma}(M)\leq\r{H}^\r{B}_{\Gamma}(S)+\r{H}^\r{B}_{\Sigma}(T).$$
The proof is complete.
\end{proof}
\section{Bowen $p$-Entropy of Subsets of Infinite Product Spaces}\label{2005240802}
The Bowen entropy \cite{Bowen1,Walters1} of continuous mappings on metric spaces is defined as follows.
Let $(X,d)$ be a metric space and let $T:X\to X$ be a continuous map. For every $n\geq1$ let $d_{n}$ be the metric on $X$ defined by
\begin{equation}\label{2006010800}
d_{n}(x,x'):=\max_{i=0,\ldots,n-1}d(T^i(x),T^i(x')).
\end{equation}
The topology induced by $d_{n}$ coincides with the original topology of $X$ and $d_{n}\leq d_{n+1}$.
Thus $\Gamma:=(d_{n})_n$ is an increasing sequence of compatible metrics on $X$. For any compact subset $K$ of $X$ the \emph{Bowen entropy of
$T$ with respect to $K$} is denoted by $\r{h}_d(T,K)$ and defined to be the value $\r{H}^\r{B}_\Gamma(K)$ as described in Section \ref{2005300733}.
The \emph{Bowen entropy of $T$} is denoted by $\r{h}_d(T)$ and defined to be the value $\r{H}^\r{B}_\Gamma(X)$.

Now we define a notion of Bowen entropy for subsets of infinite product spaces:
\begin{definition}\label{2004211415}
\emph{Let $(X,d)$ be a metric space. For every $n\geq1$  and real number $p\geq1$ let
$$d_{n,p}\big((x_k)_k,(x'_k)_k\big):=\Bigg[\sum_{i=0}^{n-1}\big(d(x_i,x'_i)\big)^p\Bigg]^\frac{1}{p}
\hspace{10mm}\big((x_k)_k,(x'_k)_k\in\ov{X}\big),$$
and also let
$$d_{n,\infty}\big((x_k)_k,(x'_k)_k\big):=\max\Big\{d(x_i,x'_i):i=0,\ldots,n-1\Big\}\hspace{10mm}\big((x_k)_k,(x'_k)_k\in\ov{X}\big).$$
Then for every real number $p$ with $1\leq p\leq\infty$, $\Delta_p:=(d_{n,p})_n$ is an increasing sequence of compatible semimetric on $\ov{X}$.
For any subset $S$ of $\ov{X}$ we call the value
$$\r{H}^\r{B}_{d,p}(S):=\r{H}^\r{B}_{\Delta_p}(S)\hspace{10mm}(1\leq p\leq\infty)$$
the \emph{Bowen $p$-entropy of $S$ with respect to $d$}.}
\end{definition}
Definition \ref{2004211415} is a generalization of the original definition of Bowen entropy:
\begin{proposition}\label{2006010751}
\emph{Let $T:(X,d)\to(X,d)$ be a continuous map. Then
$$\r{h}_d(T)=\r{H}^\r{B}_{d,\infty}(\ov{\r{Gr}}T)$$}
\end{proposition}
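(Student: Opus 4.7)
The plan is to exhibit a canonical bijective correspondence between $X$ and $\ov{\r{Gr}}T$ that, for every $n$, is an isometry from $(X,d_n)$ onto $(\ov{\r{Gr}}T,d_{n,\infty})$, and then to invoke the defining formulas of both entropies.

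First I would introduce the orbit map $\phi:X\to\ov{X}$ defined by $\phi(x):=(T^n x)_{n\geq0}$. Its image is exactly $\ov{\r{Gr}}T$ by definition. Continuity of $\phi$ follows from continuity of each iterate $T^n$ together with the universal property of the product topology on $\ov{X}$. The inverse $\phi^{-1}:\ov{\r{Gr}}T\to X$ is the restriction of the $0$-th coordinate projection $\pi_0$, which is continuous. Hence $\phi$ is a homeomorphism from $X$ onto $\ov{\r{Gr}}T$, and in particular $K\mapsto\phi(K)$ is a bijection between the compact subsets of $X$ and the compact subsets of $\ov{\r{Gr}}T$.

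Next I would verify the isometry property. Unwinding the definitions, for any $x,x'\in X$ and any $n\geq1$,
\begin{equation*}
d_{n,\infty}\bigl(\phi(x),\phi(x')\bigr)=\max_{0\leq i\leq n-1}d(T^i x,T^i x')=d_n(x,x').
\end{equation*}
Hence for every compact $K\subseteq X$, every $n\geq1$, and every $\epsilon>0$, the separated-set counts coincide:
\begin{equation*}
\r{N}^\r{s}(K,d_n,\epsilon)=\r{N}^\r{s}\bigl(\phi(K),d_{n,\infty},\epsilon\bigr),
\end{equation*}
and similarly for $\r{N}^\r{d}$ and $\r{N}^\r{id}$. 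Using Proposition \ref{2004041424} (the common-value formula), we obtain
$\r{H}^\r{B}_\Gamma(K)=\r{H}^\r{B}_{\Delta_\infty}(\phi(K))$ for every compact $K\subseteq X$.

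Finally, taking suprema over compact subsets on each side of this equality, and using that $\phi$ induces a bijection between the compact subsets of $X$ and those of $\ov{\r{Gr}}T$, I conclude
\begin{equation*}
\r{h}_d(T)=\r{H}^\r{B}_\Gamma(X)=\sup_{K\Subset X}\r{H}^\r{B}_\Gamma(K)=\sup_{K'\Subset\ov{\r{Gr}}T}\r{H}^\r{B}_{\Delta_\infty}(K')=\r{H}^\r{B}_{d,\infty}(\ov{\r{Gr}}T),
\end{equation*}
as required. There is no real obstacle here: the only point that must be handled carefully is the continuity of $\phi^{-1}$, which ensures that one does not gain or lose compact sets when passing between $X$ and $\ov{\r{Gr}}T$; everything else is a direct unwinding of Definition \ref{2004211415} and the Bowen-entropy definition given at the start of Section \ref{2005240802}.
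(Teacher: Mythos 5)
Your proposal is correct and follows essentially the same route as the paper: both define the orbit map $\phi(x)=(T^nx)_{n\geq0}$, observe that it is a homeomorphism of $X$ onto $\ov{\r{Gr}}T$ satisfying $d_n(x,x')=d_{n,\infty}(\phi(x),\phi(x'))$, and transport the Bowen entropy across this identification. You merely spell out in more detail the step the paper compresses into ``we may identify the pair $(X,\Gamma)$ with the pair $(\ov{\r{Gr}}T,\Delta_\infty)$,'' namely the equality of separated-set counts and the bijection of compact subsets.
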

\begin{proof}
Let $\phi:X\to\ov{X}$ denote the canonical map associated to $T$ given by $x\mapsto(T^n(x))_{n\geq0}$. Then $\phi$ is a homeomorphism from $X$
onto $\ov{\r{Gr}}T$. For every $n\geq1$ by the notations as in (\ref{2006010800}) we have
$$d_{n}(x,x')=d_{n,\infty}\big(\phi(x),\phi(x')\big)\hspace{10mm}(x,x'\in X).$$
Thus with $\Gamma:=(d_n)_n$ we may identify the pair $(X,\Gamma)$ with the pair $(\ov{\r{Gr}}T,\Delta_\infty)$ under $\phi$. Then we have
$$\r{h}_d(T)=\r{H}^\r{B}_{\Gamma}(X)=\r{H}^\r{B}_{\Delta_\infty}(\ov{\r{Gr}}T)=\r{H}^\r{B}_{d,\infty}(\ov{\r{Gr}}T).$$
\end{proof}
Following Proposition \ref{2006010751} we define \emph{Bowen $p$-entropy of $T:(X,d)\to(X,d)$} by
\begin{equation}
\r{h}_{d,p}(T):=\r{H}^\r{B}_{d,p}(\ov{\r{Gr}}T)\hspace{10mm}(1\leq p\leq\infty).
\end{equation}
\begin{theorem}\label{2006010753}
\emph{Let $(X,d)$ be a metric space and $S\subseteq\ov{X}$. If $1\leq p\leq q\leq\infty$ then
$$\r{H}^\r{B}_{d,\infty}(S)\leq\r{H}^\r{B}_{d,q}(S)\leq\r{H}^\r{B}_{d,p}(S)\leq\r{H}^\r{B}_{d,1}(S).$$}
\end{theorem}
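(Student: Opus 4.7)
The plan is to reduce everything to a pointwise comparison of the semimetrics $d_{n,p}$ and then invoke the monotonicity result Theorem \ref{2005310808}(iii).

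First I would recall the standard finite-dimensional norm inequality: for any $x=(x_0,\ldots,x_{n-1})\in\mathbb{R}^n$ and any $1\leq p\leq q\leq\infty$, one has $\|x\|_\infty\leq\|x\|_q\leq\|x\|_p\leq\|x\|_1$. Applied coordinatewise to the sequence $\bigl(d(x_i,x'_i)\bigr)_{i=0}^{n-1}$ for $(x_k)_k,(x'_k)_k\in\ov{X}$, this yields, for every $n\geq1$ and every pair of points in $\ov{X}$,
\begin{equation*}
d_{n,\infty}\bigl((x_k)_k,(x'_k)_k\bigr)\leq d_{n,q}\bigl((x_k)_k,(x'_k)_k\bigr)\leq d_{n,p}\bigl((x_k)_k,(x'_k)_k\bigr)\leq d_{n,1}\bigl((x_k)_k,(x'_k)_k\bigr).
\end{equation*}
In other words, the sequences $\Delta_\infty,\Delta_q,\Delta_p,\Delta_1$ of compatible semimetrics on $\ov{X}$ satisfy termwise $\Delta_\infty\leq\Delta_q\leq\Delta_p\leq\Delta_1$.

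Next I would apply Theorem \ref{2005310808}(iii) three times, each time with $S=S'$ and the appropriate pair of semimetric sequences. This immediately gives
\begin{equation*}
\r{H}^\r{B}_{\Delta_\infty}(S)\leq\r{H}^\r{B}_{\Delta_q}(S)\leq\r{H}^\r{B}_{\Delta_p}(S)\leq\r{H}^\r{B}_{\Delta_1}(S),
\end{equation*}
which by Definition \ref{2004211415} is precisely the claimed chain
\begin{equation*}
\r{H}^\r{B}_{d,\infty}(S)\leq\r{H}^\r{B}_{d,q}(S)\leq\r{H}^\r{B}_{d,p}(S)\leq\r{H}^\r{B}_{d,1}(S).
\end{equation*}

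There is no real obstacle here; the only thing worth double-checking is the direction of the norm inequalities (as $p$ grows the $p$-norm shrinks, which matches the claim that the $\infty$-entropy is the smallest and the $1$-entropy is the largest) and that Theorem \ref{2005310808}(iii) indeed yields the monotonicity of $\r{H}^\r{B}$ in the semimetric sequence when $S$ is held fixed, both of which are straightforward.
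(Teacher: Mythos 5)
Your proof is correct, and for the middle inequality it takes a genuinely different (and more economical) route than the paper. The first step, $\r{H}^\r{B}_{d,\infty}(S)\leq\r{H}^\r{B}_{d,p}(S)$ via $d_{n,\infty}\leq d_{n,p}$ and Theorem \ref{2005310808}(iii), is exactly what the paper does. For $\r{H}^\r{B}_{d,q}(S)\leq\r{H}^\r{B}_{d,p}(S)$, however, the paper does \emph{not} use the pointwise domination $d_{n,q}\leq d_{n,p}$; instead it fixes $\epsilon\in(0,1)$ and shows that $d_{n,p}<\epsilon^{q/p}$ forces $d_{n,q}<\epsilon$ (using $\sum d^q\leq\sum d^p$ when all terms are below $1$), and then invokes the uniform-equivalence criterion of Theorem \ref{2006010740}(i). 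You instead invoke the standard monotonicity of finite-dimensional $\ell^p$-norms in $p$, namely $\|x\|_\infty\leq\|x\|_q\leq\|x\|_p\leq\|x\|_1$, which does hold pointwise for the vector $\bigl(d(x_i,x'_i)\bigr)_{i=0}^{n-1}$ (normalize so $\|x\|_p=1$; then each entry is at most $1$ and $\sum|x_i|^q\leq\sum|x_i|^p$), and this lets you run all three inequalities through the single monotonicity lemma, Theorem \ref{2005310808}(iii). Your version is cleaner and subsumes the last inequality $\r{H}^\r{B}_{d,p}(S)\leq\r{H}^\r{B}_{d,1}(S)$ automatically, whereas the paper's argument via Theorem \ref{2006010740}(i) is the one you would need if only a modulus-of-continuity relation, rather than pointwise domination, were available. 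The only point worth stating explicitly in your write-up is the one-line proof of the norm monotonicity, since that is the load-bearing fact.
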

\begin{proof}
It is clear that $d_{n,\infty}\leq d_{n,p}$. Thus by Theorem \ref{2005310808}(iii) we have $\r{H}^\r{B}_{d,\infty}(S)\leq\r{H}^\r{B}_{d,p}(S)$.
Suppose that $1\leq p\leq q<\infty$. Let $\epsilon\in(0,1)$ be arbitrary and $n\geq1$ be fixed.
Suppose that $(x_k)_k,(x'_k)_k\in\ov{X}$ are such that
$$d_{n,p}\big((x_k)_k,(x'_k)_k\big)<\epsilon^\frac{q}{p}.$$
Then we have
$$\sum_{i=0}^{n-1}\big(d(x_i,x'_i)\big)^q\leq\sum_{i=0}^{n-1}\big(d(x_i,x'_i)\big)^p<\epsilon^q.$$
Hence $d_{n,q}\big((x_k)_k,(x'_k)_k\big)<\epsilon$. Now it follows from Theorem \ref{2006010740}(i) that
$\r{H}^\r{B}_{d,q}(S)\leq\r{H}^\r{B}_{d,p}(S)$.
\end{proof}
We do not yet know if the notion $\r{h}_{d,p}$ for $p\neq\infty$ is useful or if it has some advantages rather than the classical notion $\r{h}_d$.
But it is clear that in order to have a \emph{correct} notion of entropy in the case $1\leq p<\infty$ we must put some conditions on $(X,d)$
to guaranty at least the property $\r{h}_{d,p}(\r{id}_X)=0$ or equivalently (by Theorem \ref{2006010753}) $\r{h}_{d,1}(\r{id}_X)=0$.
Here is one such a condition:
\begin{theorem}
\emph{Suppose that $(X,d)$ is a metric space with the property that: For every compact subset $K$ of $X$ there exists a
constant $\alpha\geq1$ such that
\begin{equation}\label{2006010759}
\limsup_{\epsilon\to0}\epsilon^\alpha\r{N}^\r{s}(K,d,\epsilon)<\infty.
\end{equation}
Then $\r{h}_{d,1}(\r{id}_X)=0$.}
\end{theorem}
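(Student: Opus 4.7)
The plan is to reduce the computation to the diagonal embedding of $X$ in $\ov{X}$ and exploit the fact that on the diagonal the semimetric $d_{n,1}$ collapses to $n\cdot d$. Concretely, observe that
$$\ov{\r{Gr}}\,\r{id}_X=\Delta:=\{(x,x,x,\ldots):x\in X\}\subseteq\ov{X},$$
and for $\tilde x=(x,x,\ldots),\tilde y=(y,y,\ldots)\in\Delta$ one has $d_{n,1}(\tilde x,\tilde y)=\sum_{i=0}^{n-1}d(x,y)=nd(x,y)$. Thus, identifying $K\subseteq X$ with $\tilde K:=\{(x,x,\ldots):x\in K\}\subseteq\Delta$, a subset of $\tilde K$ is $d_{n,1}$-$\epsilon$-separated if and only if the corresponding subset of $K$ is $d$-$(\epsilon/n)$-separated, so that
$$\r{N}^\r{s}(\tilde K,d_{n,1},\epsilon)=\r{N}^\r{s}(K,d,\epsilon/n).$$

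Now I would use the hypothesis. Given a compact $K\subseteq X$ pick $\alpha\geq1$ and $C>0$ such that $\r{N}^\r{s}(K,d,\delta)\leq C\delta^{-\alpha}$ for all sufficiently small $\delta>0$. Fixing $\epsilon>0$, for all large $n$ we have $\epsilon/n$ small enough so that
$$\frac{1}{n}\log\r{N}^\r{s}(\tilde K,d_{n,1},\epsilon)\leq\frac{1}{n}\bigl(\log C+\alpha\log n-\alpha\log\epsilon\bigr)\xrightarrow[n\to\infty]{}0.$$
Therefore $\limsup_{n\to\infty}\frac{1}{n}\log\r{N}^\r{s}(\tilde K,d_{n,1},\epsilon)=0$ for every $\epsilon>0$, and letting $\epsilon\to0$ gives (via Proposition \ref{2004041424}) $\r{H}^\r{B}_{\Delta_1}(\tilde K)=0$.

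Finally, I would remove the restriction to sets of the form $\tilde K$. Let $M\subseteq\Delta$ be any compact subset, and let $K:=\r{pr}_0(M)$ be its projection onto the first coordinate, which is a compact subset of $X$. Then $M\subseteq\tilde K$, so by monotonicity (Lemma \ref{2005260700}(v) applied inside the definition of $\r{H}^\r{B}$, or equivalently Theorem \ref{2005310808}(iii)) we get $\r{H}^\r{B}_{\Delta_1}(M)\leq\r{H}^\r{B}_{\Delta_1}(\tilde K)=0$. Taking the supremum over compact $M\subseteq\Delta$ yields $\r{h}_{d,1}(\r{id}_X)=\r{H}^\r{B}_{d,1}(\Delta)=0$.

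The argument is essentially routine once one notices the key telescoping $d_{n,1}|_\Delta=n\cdot d$; the only place where one must be careful is the order of limits, namely ensuring that the polynomial bound $\r{N}^\r{s}(K,d,\delta)\lesssim\delta^{-\alpha}$ is applied with $\delta=\epsilon/n\to0$ as $n\to\infty$ (which is the reason one needs the assumption to hold uniformly as $\delta\to0$, not merely at a fixed scale). No deeper obstacle appears.
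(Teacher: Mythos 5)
Your proof is correct and follows essentially the same route as the paper's: both reduce to the diagonal where $d_{n,1}$ becomes $n\cdot d$, use $\r{N}^\r{s}(K,nd,\epsilon)=\r{N}^\r{s}(K,d,\epsilon/n)$, and then apply the polynomial bound from the hypothesis so that $\frac{1}{n}(\alpha\log n+\log C-\alpha\log\epsilon)\to0$. Your explicit reduction of an arbitrary compact $M\subseteq\Delta$ to a set of the form $\tilde K$ is a minor extra care step that the paper leaves implicit, but it is the same argument.
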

\begin{proof}
First of all observe that we must show that for any compact $K\subseteq X$,
$$\lim_{\epsilon\to0}\limsup_{n\to\infty}\frac{1}{n}\log\r{N}^\r{s}(K,nd,\epsilon)=0.$$
Let $\epsilon>0$ be arbitrary and fixed. There is a bound $M>0$ such that for large enough $n$,
$$\r{N}^\r{s}(K,d,\frac{\epsilon}{n})\leq M\frac{n^\alpha}{\epsilon^\alpha}.$$
Thus we have
\begin{equation*}
\begin{split}
\limsup_{n\to\infty}\frac{1}{n}\log\r{N}^\r{s}(K,nd,\epsilon)&=\limsup_{n\to\infty}\frac{1}{n}\log\r{N}^\r{s}(K,d,\frac{\epsilon}{n})\\
&\leq\limsup_{n\to\infty}\frac{\alpha\log n+\log M-\alpha\log\epsilon}{n}\\
&=0.
\end{split}
\end{equation*}
The proof is complete.
\end{proof}
We remark that the quantities similar to the one given by $\limsup$ in (\ref{2006010759}) appear in Dimension Theory of metric spaces.
The reader may easily convince him/herself that in the case that $X$ is the Euclidean space $\bb{R}^q$ and $d$ is the usual Euclidean metric 
(or any other strongly uniform equivalent metric), (\ref{2006010759}) is satisfied for any compact subset $K$ of $\bb{R}^q$ with $\alpha=q$.

The following result is a generalization of \cite[Theorem 7.4]{Walters1}.
\begin{theorem}\label{2004240800}
\emph{Let $X$ be a space with two uniformly equivalent metrics $d,d'$, i.e. for every $\epsilon>0$ there exists $\delta>0$ such that
if $d(x,x')<\delta$ then $d'(x,x')<\epsilon$, and such that if $d'(x,x')<\delta$ then $d(x,x')<\epsilon$. Then for every $S\subseteq\ov{X}$
we have
$$\r{H}^\r{B}_{d,\infty}(S)=\r{H}^\r{B}_{d',\infty}(S).$$}
\end{theorem}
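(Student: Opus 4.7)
The plan is to reduce this to Theorem \ref{2006010740}(ii) by verifying that the two sequences of semimetrics $\Delta_\infty = (d_{n,\infty})_n$ and $\Delta'_\infty = (d'_{n,\infty})_n$ on $\ov{X}$ are equi uniformly-equivalent. The key observation is that uniform equivalence at the level of the base metrics $d, d'$ automatically lifts to uniform equivalence of the $\max$-semimetrics $d_{n,\infty}, d'_{n,\infty}$ with the \emph{same} modulus $\delta$, independent of $n$.

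First I would fix $\epsilon > 0$ and, using that $d, d'$ are uniformly equivalent, choose $\delta > 0$ such that $d(x,x') < \delta$ implies $d'(x,x') < \epsilon$ and $d'(x,x') < \delta$ implies $d(x,x') < \epsilon$ for all $x,x' \in X$. Then, for any fixed $n \geq 1$ and $(x_k)_k, (x'_k)_k \in \ov{X}$, the inequality $d_{n,\infty}((x_k)_k,(x'_k)_k) < \delta$ means that $d(x_i,x'_i) < \delta$ for every $i = 0,\ldots,n-1$, which in turn forces $d'(x_i,x'_i) < \epsilon$ for every such $i$, and hence $d'_{n,\infty}((x_k)_k,(x'_k)_k) < \epsilon$. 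The symmetric implication is proved identically. Crucially, the same $\delta$ works for all $n$.

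This is precisely the statement that $\Delta_\infty$ and $\Delta'_\infty$ are equi uniformly-equivalent in the sense of Theorem \ref{2006010740}(ii). Applying that theorem, we conclude
\[
\r{H}^\r{B}_{d,\infty}(S) = \r{H}^\r{B}_{\Delta_\infty}(S) = \r{H}^\r{B}_{\Delta'_\infty}(S) = \r{H}^\r{B}_{d',\infty}(S),
\]
which is the desired identity.

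The argument is essentially routine once one notices that the $\max$ operation transports a modulus of uniform equivalence without inflation; there is no genuine obstacle, and no compactness or finiteness hypothesis on $X$ is needed. It is worth noting that the analogous statement would likely fail for $p < \infty$, since summing $n$ terms would in general spoil a fixed uniform modulus; this is why the theorem is stated only for $p = \infty$.
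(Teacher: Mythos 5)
Your proof is correct and follows exactly the paper's route: the paper likewise reduces to Theorem \ref{2006010740}(ii) by observing that $\Delta_\infty$ and $\Delta'_\infty$ are equi uniformly-equivalent, merely labeling as ``easily seen'' the lifting of the modulus $\delta$ through the finite maximum that you spell out. Your closing remark about why the argument breaks for $p<\infty$ is a sensible observation consistent with the paper's separate treatment of that case via strong uniform equivalence.
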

\begin{proof}
It is easily seen that $\Delta_{\infty}=(d_{n,\infty})_n$ and $\Delta'_{\infty}=(d'_{n,\infty})_n$ are equi uniformly-equivalent.
Thus the desired result follows from Theorem \ref{2006010740}.
\end{proof}
We have the following trivial corollary of Theorem \ref{2004240800}.
\begin{corollary}\label{2006010844}
\emph{If $X$ is compact then $\r{H}^\r{B}_{d,\infty}(S)$ only depends on $S$ and the topology of $d$.}
\end{corollary}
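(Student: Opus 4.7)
The plan is to reduce the corollary directly to Theorem \ref{2004240800}. That theorem already shows that $\r{H}^\r{B}_{d,\infty}(S)$ is invariant under replacing $d$ by any \emph{uniformly equivalent} metric $d'$. So the only thing left to verify is that on a compact space, any two compatible metrics are automatically uniformly equivalent.

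More precisely, let $d$ and $d'$ be two metrics on $X$ inducing the same topology. The identity map $\r{id}:(X,d)\to(X,d')$ is a continuous bijection from a compact space to a Hausdorff space, hence a homeomorphism; in particular, $\r{id}$ is continuous in both directions. Because $(X,d)$ and $(X,d')$ are compact metric spaces, any continuous map between them is uniformly continuous. Applying this to $\r{id}$ and its inverse yields exactly the uniform equivalence hypothesis of Theorem \ref{2004240800}: for every $\epsilon>0$ there is a $\delta>0$ such that $d(x,x')<\delta$ implies $d'(x,x')<\epsilon$, and vice versa.

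Given this, Theorem \ref{2004240800} gives $\r{H}^\r{B}_{d,\infty}(S)=\r{H}^\r{B}_{d',\infty}(S)$ for every $S\subseteq\ov{X}$. Since any compatible metric on $X$ is uniformly equivalent to $d$ by the argument above, $\r{H}^\r{B}_{d,\infty}(S)$ depends only on $S$ and on the topology of $X$, as claimed.

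There is no substantive obstacle here; the only point requiring care is the standard fact that continuous maps on compact metric spaces are uniformly continuous, which is classical. The corollary is therefore essentially a one-line consequence of Theorem \ref{2004240800} once compactness is invoked.
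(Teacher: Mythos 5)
Your proposal is correct and follows exactly the route the paper intends: the paper presents this as a ``trivial corollary'' of Theorem \ref{2004240800}, the implicit point being precisely that on a compact space any two compatible metrics are uniformly equivalent (by uniform continuity of the identity map in both directions), which you have spelled out correctly.
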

\begin{theorem}
\emph{Let $X$ be a space with two strongly uniform equivalent metrics $d,d'$, i.e. there exist constants $c,c'>0$ such that
$c'd'\leq d\leq cd'$. Then for every $S\subseteq\ov{X}$
we have
$$\r{H}^\r{B}_{d,p}(S)=\r{H}^\r{B}_{d',p}(S)\hspace{10mm}(1\leq p\leq\infty).$$}
\end{theorem}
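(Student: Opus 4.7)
The plan is to reduce this to Theorem \ref{2005310808}(v), which says that for equi strongly-uniform-equivalent increasing sequences of compatible semimetrics, the Bowen entropies coincide. So the whole task is to check that the hypothesis $c'd' \leq d \leq c d'$ propagates to the sequences $\Delta_p = (d_{n,p})_n$ and $\Delta'_p = (d'_{n,p})_n$ with the same constants $c,c'$.

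First I would handle the case $1 \leq p < \infty$. Fix $n \geq 1$ and $(x_k)_k,(x'_k)_k \in \ov{X}$. Raising the pointwise inequality $c' d(x_i,x'_i) \leq d(x_i,x'_i) \cdot (d/d')$-type bound to the $p$-th power gives $(c')^p d'(x_i,x'_i)^p \leq d(x_i,x'_i)^p \leq c^p d'(x_i,x'_i)^p$ for each $i = 0,\ldots,n-1$. Summing over $i$ and taking the $p$-th root yields
\[
c' \, d'_{n,p}\big((x_k)_k,(x'_k)_k\big) \leq d_{n,p}\big((x_k)_k,(x'_k)_k\big) \leq c\, d'_{n,p}\big((x_k)_k,(x'_k)_k\big).
\]
For $p = \infty$ the same inequality is immediate since the max of $c'd'(x_i,x'_i)$ over $i$ is $c' d'_{n,\infty}$, and similarly on the other side.

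Consequently $\Delta_p$ and $\Delta'_p$ are equi strongly-uniform-equivalent on $\ov{X}$ (with the very same constants $c, c'$, independent of $n$). Applying Theorem \ref{2005310808}(v) to these two sequences on the space $\ov{X}$ gives $\r{H}^\r{B}_{\Delta_p}(S) = \r{H}^\r{B}_{\Delta'_p}(S)$, which by Definition \ref{2004211415} is exactly $\r{H}^\r{B}_{d,p}(S) = \r{H}^\r{B}_{d',p}(S)$. There is no real obstacle here: the only thing to notice is that the constants $c, c'$ are preserved when passing from the single-coordinate metric to the $p$-aggregate metric, which is a straightforward consequence of the $p$-homogeneity of the $\ell^p$-norm.
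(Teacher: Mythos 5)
Your proposal is correct and follows exactly the paper's route: the paper also reduces the statement to Theorem \ref{2005310808}(v) by noting that $\Delta_p$ and $\Delta'_p$ are equi strongly-uniform-equivalent, merely leaving as "easily verified" the computation you spell out (that the constants $c,c'$ survive the passage to $d_{n,p}$ by $p$-homogeneity, and to $d_{n,\infty}$ by taking maxima).
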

\begin{proof}
It is easily verified that $\Delta_{p}=(d_{n,p})_n$ and $\Delta'_{p}=(d'_{n,p})_n$ are equi strongly-uniform-equivalent.
Thus the desired result follows from Theorem \ref{2005310808}(v).
\end{proof}
Now we show that as in the classical case \cite[Theorem 7.8]{Walters1} Bowen $\infty$-entropy and generalized topological entropy
coincide in compact metric spaces.
\begin{theorem}\label{2004270841}
\emph{Let $(X,d)$ be a compact metric space and $L\subseteq\ov{X}$ be closed. Then
$$\r{H}^\r{B}_{d,\infty}(L)=\r{H}^\r{t}_{X}(L).$$}
\end{theorem}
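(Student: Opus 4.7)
The plan is to use the machinery of Lemma \ref{2005260700} to translate between open covers of $X$ and $\epsilon$-nets for the semimetrics $d_{n,\infty}$ on $\ov{X}$. The first observation is that since $X$ is compact, $\ov{X}$ is compact too, so the closed subset $L$ is itself compact; hence $\r{H}^\r{B}_{d,\infty}(L)$ can be computed directly as the common value in (\ref{2004032232}) with $K=L$, without having to take a supremum over compact subsets. Note also that $d_{n,\infty}$ is a compatible semimetric on $\ov{X}$: its open balls are products of $d$-balls on the first $n$ coordinates with copies of $X$ afterwards, which are open in the product topology.

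For the inequality $\r{H}^\r{t}_X(L)\le\r{H}^\r{B}_{d,\infty}(L)$, I would fix an open cover $\c{U}$ of $X$ and, using compactness of $(X,d)$, pick a Lebesgue number $\delta>0$ for $\c{U}$. The key geometric step is to verify that $\delta$ is also a Lebesgue number for $\ov{\c{U}}^n$ with respect to the semimetric $d_{n,\infty}$ on $\ov{X}$: any subset $A\subseteq\ov{X}$ with $d_{n,\infty}$-diameter $<\delta$ has each coordinate projection $\r{pr}_i(A)$ of $d$-diameter $<\delta$ for $i=0,\dots,n-1$, so each such projection lies in some $U_{j_i}\in\c{U}$ and therefore $A\subseteq U_{j_0}\times\cdots\times U_{j_{n-1}}\times X\times X\times\cdots\in\ov{\c{U}}^n$. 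Applying Lemma \ref{2005260700}(iii) to the pair $(\ov{X},d_{n,\infty})$ gives
\[
\r{N}_{L/\ov{X}}(\ov{\c{U}}^n)\le\r{N}^\r{id}(L,d_{n,\infty},\delta/2),
\]
and taking $\frac{1}{n}\log$ and $\limsup_n$ yields $\r{H}^\r{t}(L,\c{U})\le\r{H}^\r{B}_{d,\infty}(L)$. Taking the supremum over $\c{U}$ completes this direction.

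For the reverse inequality $\r{H}^\r{B}_{d,\infty}(L)\le\r{H}^\r{t}_X(L)$, I would fix $\epsilon>0$ and choose an open cover $\c{U}$ of $X$ with $\r{diam}(\c{U})<\epsilon$ (available by compactness). The crucial observation here is that members of $\ov{\c{U}}^n$ still have $d_{n,\infty}$-diameter strictly less than $\epsilon$, because the factors $X$ in positions $\ge n$ are invisible to $d_{n,\infty}$. Hence Lemma \ref{2005260700}(iv) (applied in $\ov{X}$ with semimetric $d_{n,\infty}$) gives
\[
\r{N}^\r{s}(L,d_{n,\infty},\epsilon)\le\r{N}_{L/\ov{X}}(\ov{\c{U}}^n).
\]
Taking $\frac{1}{n}\log$ and $\limsup_n$ yields $\limsup_n\frac{1}{n}\log\r{N}^\r{s}(L,d_{n,\infty},\epsilon)\le\r{H}^\r{t}(L,\c{U})\le\r{H}^\r{t}_X(L)$; letting $\epsilon\to0$ and invoking Proposition \ref{2004041424} to rewrite $\r{H}^\r{B}_{d,\infty}(L)$ in terms of the $\r{N}^\r{s}$ quantities finishes the proof.

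The argument is essentially routine once one sees how to mediate between the two settings; the only step requiring genuine care is the verification that Lebesgue numbers and diameters for covers of $X$ transfer correctly to $\ov{\c{U}}^n$ under $d_{n,\infty}$, which is where the fact that $d_{n,\infty}$ ignores tail coordinates does all the work. This mirrors the classical proof of \cite[Theorem~7.8]{Walters1}, with the tail copies of $X$ in $\ov{\c{U}}^n$ playing a harmless role precisely because $d_{n,\infty}$ only sees the first $n$ coordinates.
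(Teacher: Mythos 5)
Your proposal is correct and follows essentially the same route as the paper: both directions rest on Lemma \ref{2005260700}(ii)--(iv) to compare $\r{N}_{L/\ov{X}}(\ov{\c{U}}^n)$ with the $d_{n,\infty}$-spanning/separated counts on the compact set $L$. The only organizational difference is that the paper sandwiches $\r{N}^\r{s}(L,d_{n,\infty},\tfrac{1}{2k})$ between cover numbers for a fixed sequence of covers of shrinking diameter and then lets $k\to\infty$ via Theorem \ref{2004270830}, whereas you treat the two inequalities separately for an arbitrary cover, with an explicit verification that a Lebesgue number for $\c{U}$ transfers to $\ov{\c{U}}^n$ under $d_{n,\infty}$ --- if anything a more carefully justified rendering of the same argument.
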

\begin{proof}
Let $(\c{U}_k)_k$ be a sequence of open covers of $X$ such that $\r{diam}(\c{U}_k)<\frac{1}{k}$ for every $k$.
It follows from (ii), (iii) and (iv) of Lemma \ref{2005260700} that
$$\r{N}_{L/\ov{X}}(\ov{\c{U}_k}^n)\leq\r{N}^\r{s}(L,d_{n,\infty},\frac{1}{2k})\leq\r{N}_{L/\ov{X}}(\ov{\c{U}_{2k}}^n).$$
This implies that
\begin{equation*}\label{2004270840}
\r{H}^\r{t}_{X}(L,\c{U}_k)\leq\limsup_{n\to\infty}\frac{1}{n}\log\r{N}^\r{s}(L,d_{n,\infty},\frac{1}{2k})\leq\r{H}^\r{t}_{X}(L,\c{U}_{2k})
\end{equation*}
Now let $k\to\infty$. Then the desired result is concluded from Theorem \ref{2004270830}.
\end{proof}
Note that the above result contains the result stated in Corollary \ref{2006010844} for closed subsets.
\begin{theorem}
\emph{Let $(X,d)$ be a finite metric space. Then for any $S\subseteq\ov{X}$ and every $p\geq1$,
$$\r{H}^\r{B}_{d,p}(S)=\r{H}^\r{B}_{d,\infty}(S).$$}
\end{theorem}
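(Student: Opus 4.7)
The plan is to show the nontrivial direction $\r{H}^\r{B}_{d,p}(S)\leq\r{H}^\r{B}_{d,\infty}(S)$; the reverse inequality is already contained in Theorem \ref{2006010753}. The idea is to exploit that on a finite metric space the function $d$ takes only finitely many values, so there is a smallest positive distance
$$\delta_0:=\min\{d(x,x'):x,x'\in X,\,x\neq x'\}>0$$
(the case $|X|=1$ is trivial since then $\ov{X}$ is a singleton). This sharp gap between $0$ and positive distances should make the $d_{n,p}$ and $d_{n,\infty}$ separation conditions agree for small enough scales.

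The key step is to fix $p\in[1,\infty)$ and $\epsilon\in(0,\delta_0]$ and compare $\r{N}^\r{s}(K,d_{n,p},\epsilon)$ with $\r{N}^\r{s}(K,d_{n,\infty},\epsilon)$ for a compact $K\subseteq\ov{X}$. Given two sequences $(x_k)_k,(x'_k)_k\in\ov{X}$, observe that, since every positive value of $d$ is at least $\delta_0\geq\epsilon$,
$$d_{n,\infty}\bigl((x_k)_k,(x'_k)_k\bigr)\geq\epsilon\iff\exists\,i\in\{0,\ldots,n-1\}:\,x_i\neq x'_i,$$
and, analogously, if the two sequences differ at some coordinate $i<n$ then $d_{n,p}((x_k)_k,(x'_k)_k)^p\geq d(x_i,x'_i)^p\geq\delta_0^p\geq\epsilon^p$, while if they agree on all coordinates $0,\ldots,n-1$ then $d_{n,p}=0<\epsilon$. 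Hence the two separation conditions coincide, and
$$\r{N}^\r{s}(K,d_{n,\infty},\epsilon)=\r{N}^\r{s}(K,d_{n,p},\epsilon)\qquad(0<\epsilon\leq\delta_0,\ n\geq1).$$

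From this identity, taking $\frac{1}{n}\log$, then $\limsup_{n\to\infty}$, and finally $\lim_{\epsilon\to0^+}$ (which by Proposition \ref{2004041424} equals $\sup_\epsilon$ and computes $\r{H}^\r{B}_{\Delta_\infty}(K)$ and $\r{H}^\r{B}_{\Delta_p}(K)$ respectively), we obtain $\r{H}^\r{B}_{d,\infty}(K)=\r{H}^\r{B}_{d,p}(K)$ for every compact $K\subseteq\ov{X}$; passing to the supremum over compact $K\subseteq S$ yields the desired equality. I do not expect any real obstacle here: the only point to be a bit careful about is that the $\limsup$/separation quantities are computed in $\ov{X}$ rather than $X$, and that the argument is valid uniformly across all $n\geq1$ because the threshold $\delta_0$ depends only on $d$ on $X$, not on $n$ or on the sequences chosen.
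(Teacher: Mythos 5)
Your proof is correct and takes essentially the same approach as the paper: the paper's entire argument is the observation that for $\epsilon$ below the minimal positive distance of $(X,d)$ one has $\r{N}^\r{s}(K,d_{n,p},\epsilon)=\r{N}^\r{s}(K,d_{n,\infty},\epsilon)$ for every $n\geq1$ and every compact $K\subseteq\ov{X}$, which is exactly the identity you establish before passing to the limits. You simply spell out the details that the paper dismisses as ``obvious''.
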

\begin{proof}
It follows from the obvious fact that if $\epsilon<\min\big\{d(x,x'):x,x'\in X, x\neq x'\big\}$ then for any (compact) subset $K$ of $\ov{X}$,
every $n\geq1$, and every $p\geq1$ we have
$$\r{N}^\r{s}(K,d_{n,p},\epsilon)=\r{N}^\r{s}(K,d_{n,\infty},\epsilon).$$
\end{proof}
{\footnotesize}
\end{document}